\documentclass[11pt, oneside]{amsart}
\usepackage{amsmath,ifthen, amsfonts, amssymb,
srcltx,
   amsopn, textcomp}

\usepackage{hyperref}
\usepackage{graphicx}
\usepackage{xypic}

\usepackage{marginnote}
\usepackage{overpic}

\usepackage[small]{caption}

\newtheorem{thm}{Theorem}[section]
\newtheorem{lem}[thm]{Lemma}

\newtheorem{cor}[thm]{Corollary}

\newtheorem{prop}[thm]{Proposition}

\newtheorem*{claim}{Claim}

\theoremstyle{definition}
\newtheorem{defn}[thm]{Definition}
\newtheorem{rem}[thm]{Remark}
\newtheorem{exmp}[thm]{Example}

\newcommand{\field}[1]{\mathbb{#1}}
\newcommand{\integers}{\ensuremath{\field{Z}}}

\newcommand{\wt}[1]{\widetilde{#1}}
\newcommand{\ola}[1]{\overleftarrow{ #1 }}
\newcommand{\ora}[1]{\overrightarrow{ #1 }}
\newcommand{\la}{\langle}
\newcommand{\ra}{\rangle}

\DeclareMathOperator{\Carrier}{Carrier}

\DeclareMathOperator{\lcm}{lcm}

\DeclareMathOperator{\stab}{Stab}

\setlength{\textwidth}{5.5in}
\setlength{\textheight}{8.15in}
\hoffset=-.27in
\linespread{1.2}

\begin{document}
\title{Classifying Finite Dimensional Cubulations of Tubular Groups}
\author{Daniel J. Woodhouse}

\begin{abstract}
  A tubular group is a group that acts on a tree with $\mathbb{Z}^2$ vertex stabilizers and $\mathbb{Z}$ edge stabilizers.
  This paper develops further a criterion of Wise and determines when a tubular group acts freely on a finite dimensional CAT(0) cube complex.
  As a consequence we offer a unified explanation of the failure of separability by revisiting the non-separable 3-manifold group of Burns, Karrass and Solitar and relating it to the work of Rubinstein and Wang.
  We also prove that if an immersed wall yields an infinite dimensional cubulation then the corresponding subgroup is quadratically distorted.
\end{abstract}

\maketitle

\section{Introduction} \label{introduction}

A \emph{tubular group} $G$ is a group which splits as a graph of groups with $\mathbb{Z}^2$ vertex groups and $\mathbb{Z}$ edge groups.
A tubular group is the fundamental group of a graph of spaces $X$ with each vertex space homeomorphic to a torus and each edge space homeomorphic to a cylinder.
The graph of spaces $X$ is a \emph{tubular space}.
In this paper all tubular groups will be finitely generated and thus tubular spaces will be compact.
Examples of tubular groups were used in \cite{BradyBridson00} by Brady and Bridson to provide groups with isoperimetric function $n^{\alpha}$ for all $\alpha$ in a dense subset of $[2, \infty)$.
Cashen provided a method of determining when two tubular groups are quasi-isometric \cite{Cashen10}.
Wise gave a criterion which determines whether or not a tubular group acts freely on a CAT(0) cube complex \cite{Wise13}, and characterised which tubular groups act cocompactly on a CAT(0) cube complex. 
This paper determines which tubular groups act on finite dimensional CAT(0) cube complexes.

 \begin{defn}
  An \emph{infinite cube} in a CAT(0) cube complex is the union of an ascending sequence of $n$-cubes $c_n$ of $\wt{X}$ such that $c_n$ is a subcube of $c_{n+1}$ for each $n$.
 \end{defn}

 \noindent Wise reduces the existence of cubulations to a combinatorial criterion called \emph{equitable sets}.
 Given an equitable set one can construct a finite set of \emph{immersed walls}.
  An immersed wall is a graph immersed $\pi_1$-injectively in $X$, such that $\wt{\Lambda}$ lifts to a 2-sided embedding $\wt{\Lambda} \rightarrow \wt{X}$.
 By $2$-sided we mean that the image of $\wt{\Lambda}$ in $\wt{X}$ is contained in a neighbourhood homeomorphic to $\wt{\Lambda} \times [-1,1]$. 
 The set of all such lifts give a $G$-invariant set $\mathcal{W}$ of \emph{walls}. The pair $(\wt{X}, \mathcal{W})$ is a \emph{wallspace}.
 In this paper, all references to ``immersed walls'' will be in reference to immersed walls obtained from an equitable set.
 Note that the theorems referring to immersed walls will not apply to any other kind of immersed walls.
 Section \ref{Contracting} defines the notion of a \emph{dilating wall} which can be recognized through a combinatorial criterion.
 As explained in Proposition \ref{prop:partition}, a wall not being dilated means its $G$-translates intersecting a vertex space in $\wt{X}$ can be partitioned into finitely many sets of pairwise nonintersecting walls. 

 The following is proven in Theorem \ref{theorem:MainA}

\begin{thm} \label{mainB}
 Let $X$ be tubular space, and $(\wt{X}, \mathcal{W})$ the wallspace obtained from a finite set of immersed walls in $X$. The following are equivalent: 
 \begin{enumerate}
  \item \label{infDimensional} The dual cube complex $C(\wt{X},\mathcal{W})$ is infinite dimensional.
  \item \label{infCube} The dual cube complex $C(\wt{X},\mathcal{W})$ contains an infinite cube.
  \item One of the immersed walls is dilated.
 \end{enumerate}
\end{thm}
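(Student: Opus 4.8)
The plan is to prove the cycle of implications $(\ref{infCube}) \Rightarrow (\ref{infDimensional}) \Rightarrow (3) \Rightarrow (\ref{infCube})$. Throughout I use the dictionary supplied by Sageev's construction: the $n$-cubes of $C(\wt{X},\mathcal{W})$ correspond to collections of $n$ pairwise crossing walls, so that $\dimension C(\wt{X},\mathcal{W})$ is the supremum of the sizes of pairwise crossing subfamilies of $\mathcal{W}$, while an infinite cube is recorded by an infinite family of walls that pairwise cross, oriented coherently along an ascending chain of $0$-cubes. With this dictionary $(\ref{infCube}) \Rightarrow (\ref{infDimensional})$ is immediate: an infinite cube contains an $n$-cube for every $n$ by definition, so $C(\wt{X},\mathcal{W})$ has cubes of arbitrarily large dimension and is infinite dimensional.

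For $(\ref{infDimensional}) \Rightarrow (3)$ I would argue the contrapositive. Suppose none of the finitely many immersed walls is dilated. By Proposition \ref{prop:partition}, for each vertex space $\wt{X}_v$ the translates of $\mathcal{W}$ meeting $\wt{X}_v$ partition into finitely many families of pairwise nonintersecting walls; since there are only finitely many immersed walls and finitely many $G$-orbits of vertex spaces, a single $N$ bounds the number of such families over \emph{all} vertex spaces. The geometric heart of this direction is to show that any finite collection of pairwise crossing walls can be made to cross inside one common vertex space: each wall meets a convex set of vertices of the Bass--Serre tree $T$ (its carrier), two crossing walls already cross inside a vertex space they share (a crossing in a tubular space is detected where the walls restrict to transverse straight lines), and the Helly property for subtrees of $T$ then yields a single vertex space met by all of them, in which they pairwise cross as lines. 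Inside that vertex space at most one wall can come from each nonintersecting family, so no more than $N$ walls pairwise cross and $\dimension C(\wt{X},\mathcal{W}) \le N < \infty$, contradicting $(\ref{infDimensional})$.

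For $(3) \Rightarrow (\ref{infCube})$ I would run the construction in reverse. If some immersed wall is dilated, then the combinatorial criterion of Section \ref{Contracting} furnishes an element $g \in G$ and a wall $W$ meeting a vertex space $\wt{X}_v$ for which the translates $g^{n}W$ all meet $\wt{X}_v$ and are mutually transverse there, the dilation forcing distinct slopes and hence pairwise crossings; this is an infinite family of pairwise crossing walls. Fixing the orientation of every wall toward a basepoint $x \in \wt{X}_v$ and then flipping the walls $W, gW, \dots, g^{n}W$ produces an ascending sequence of $0$-cubes spanning $n$-cubes $c_n \subset c_{n+1}$, whose union is an infinite cube; only finitely many walls are flipped at each stage, so the descending chain condition is preserved, and the flipped halfspaces remain pairwise consistent because the walls $g^{n}W$ pairwise cross.

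I expect the main obstacle to be the localisation step used in $(\ref{infDimensional}) \Rightarrow (3)$, namely that a family of pairwise crossing walls may be arranged to cross inside a single vertex space; this is what converts the purely local partition bound of Proposition \ref{prop:partition} into a global bound on cube dimension, and it leans on both the convexity of wall carriers in $T$ and the fact that within a vertex space walls appear as straight lines. The secondary delicate point is verifying in $(3) \Rightarrow (\ref{infCube})$ that flipping the translates $g^{n}W$ yields genuine vertices of $C(\wt{X},\mathcal{W})$ --- i.e. consistent, descending-chain orientations --- rather than merely abstract pairwise-crossing data.
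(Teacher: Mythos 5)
Your cycle of implications is organized sensibly, and $(\ref{infCube})\Rightarrow(\ref{infDimensional})$ is fine, but the implication $(3)\Rightarrow(\ref{infCube})$ --- the heart of the theorem --- rests on a false mechanism. You claim dilation furnishes a wall $W$ and element $g$ so that the translates $g^nW$ all meet a common vertex space and are ``mutually transverse there, the dilation forcing distinct slopes.'' This cannot happen: the dilation function is defined on $\stab(W)$, and for $g\in\stab(W)$ the translates $g^nW$ all equal $W$; more fundamentally, there are only finitely many $G_{\wt{v}}$-orbits of walls meeting a vertex space $\wt{X}_{\wt{v}}$, hence only finitely many slopes among wall-lines there, so no infinite family of walls can be pairwise transverse \emph{inside} one vertex space. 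The actual phenomenon is the opposite: in the dilated situation of Example \ref{example1}, the infinite pairwise crossing family is \emph{parallel} in the common vertex space, and the regular intersections occur in vertex spaces arbitrarily far away. Producing those far-away crossings is the real work, done in the paper by Proposition \ref{prop:dilatedInfinite}: one takes $h$ perpendicular, $g$ with $R(g)>1$, sets $\wt{M}=h^{\ola{n}^s}\wt{\Lambda}_0$, and shows the translates $g^{-i}\wt{M}$ cannot all avoid regular intersections, because otherwise, writing $g^{-i}\wt{M}=h^{\ola{n}r_i}\wt{\Lambda}_{\sigma(i)}$ via the partition of Lemma \ref{lemma:partition}, the recursion $r_{i+1}=(\ora{n}/\ola{n})r_i+a_i$ with $\ora{n}/\ola{n}<1$ and $a_i$ bounded forces $\{r_i\}$ to be bounded, contradicting infiniteness of the family. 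Nothing in your proposal substitutes for this argument. Your cube-building step is also incomplete: pairwise crossing alone does not let you flip walls one at a time, since a wall parallel to the next one and interposed between it and the basepoint obstructs the flip; Proposition \ref{WeakInfDimInfCube} handles exactly this obstruction (Figure \ref{fig:InfCubeObstruction}) by replacing the obstructing wall with the closest interposed parallel wall, using Lemma \ref{lemma:paralem1}.

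Your contrapositive argument for $(\ref{infDimensional})\Rightarrow(3)$ contains the same misconception but is repairable. The claim that ``two crossing walls already cross inside a vertex space they share'' is false --- this is precisely the distinction between regular and non-regular intersections in Remark \ref{intersections}, and Example \ref{example1} exhibits pairwise crossing walls that are parallel in their shared vertex space. Fortunately your counting does not need crossings to localize: condition (\ref{part:2}) of Proposition \ref{prop:partition} (which requires the non-dilated hypothesis, satisfied in your contrapositive, and whose proof rests on Lemma \ref{lemma:disjointWallOrbit}) makes each family pairwise non-intersecting \emph{globally}, so two walls that cross anywhere at all must lie in distinct families; combining this with Lemma \ref{lemma:Helly} and your uniform bound $N$ on the number of families meeting a vertex space bounds every crossing set, which is essentially the paper's Proposition \ref{prop:nondilatedFinite}. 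You should also dispose of vertical walls, which Proposition \ref{prop:partition} excludes: two vertical walls never cross, so they contribute at most one to any pairwise crossing set.
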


\noindent Corollary~\ref{cor:decidable} states that it is decidable if a given immersed wall is dilated.
However, in contrast to cocompactness, there is no known simple criterion to determine whether or not a given tubular group acts on a finite dimensional CAT(0) cube complex.
See Example~\ref{exmp:DirectInfiniteDimensional} for an example of a direct proof that a specific tubular group does not posses an equitable set that can produce non-dilated immersed walls.

A group $G$ is \emph{separable} if every finitely generated subgroup $H \leqslant G$ is the intersection of all finite index subgroups containing $H$.
Burns, Karass, and Solitar gave the first example of a non-separable $3$-manifold group \cite{BurnsKarrassSolitar87}.
Niblo and Wise reproved this result \cite{NibloWiseNonEngulfing}.
Rubinstein and Wang produced an example of a non-embedded immersed surface $S$ in a graph manifold $M$ such that $\wt{S} \rightarrow \wt{M}$ is injective and any two $\pi_1M$ translates of $\wt{S}$ intersect \cite{RubinsteinWang98}. 
It follows that $\pi_1S$ is not separable in $\pi_1M$.
This is an application of the geometric interpretation for separability given by Scott \cite{Scott78}.
Theorem \ref{mainB} carries information about the separability of the associated codimension-1 subgroups of $G$, and provides a new proof of the non-subgroup separability of certain $3$-manifold groups that conceptually unifies the result of Burns, Karass, and Solitar with the geometric proof of Rubinstein and Wang (see Example \ref{example1}).

Finally, the following Theorem is a consequence of Theorem \ref{mainB} together with a study of the geometry of the walls.

\begin{thm} \label{mainD}
 Let the tubular space $X$ associated to the tubular group $G$ have a finite set of immersed walls.
 If the corresponding wallspace $(\wt{X}, \mathcal{W})$ has quasi-isometrically embedded walls, then the dual CAT(0) cube complex $C(\wt{X}, \mathcal{W})$ is finite dimensional.
\end{thm}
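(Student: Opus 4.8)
The plan is to prove Theorem~\ref{mainD} as the contrapositive of a single-wall statement fed into Theorem~\ref{mainB}. Suppose $C(\wt{X}, \mathcal{W})$ is infinite dimensional. By Theorem~\ref{mainB} one of the finitely many immersed walls, say $\Lambda$, is dilated, so it suffices to show that a dilated wall $\Lambda$ cannot be quasi-isometrically embedded: the hypothesis of Theorem~\ref{mainD} then fails, which is exactly what the contrapositive requires. Since $\pi_1 \Lambda$ acts cocompactly and by isometries on $\wt{\Lambda}$, the \v{S}varc--Milnor lemma identifies the statement ``$\wt{\Lambda} \hookrightarrow \wt{X}$ is a quasi-isometric embedding'' with ``$\pi_1\Lambda$ is undistorted in $G$''. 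Thus the whole theorem reduces to the geometric claim that a dilated wall yields a distorted subgroup, and in fact the distortion we produce is quadratic.

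First I would fix the geometry of $\wt{X}$ as a tree of spaces over the Bass--Serre tree: a Euclidean plane $P_v \cong \Euclidean^2$ over each vertex $v$, a flat strip over each edge, and the planes glued along lines covering the attaching circles. In this model $\wt{\Lambda}$ is a tree carried by $\wt{X}$ which meets each plane in a finite union of parallel straight segments of a single slope and joins consecutive segments by arcs crossing the strips. The key structural observation is that inside any one plane the inclusion $\wt{\Lambda} \hookrightarrow \wt{X}$ is an isometry onto its image, so all distortion of $\wt{\Lambda}$ must be created by the global arrangement of the planes, not inside any single plane.

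Next I would convert dilation into a length estimate. The criterion of Section~\ref{Contracting}, reformulated through Proposition~\ref{prop:partition}, should exhibit a plane $P$ and two attaching lines $L, L'$ of different slopes between which the wall shuttles: the wall meets $P$ in a stack of parallel segments, the $i$-th of which runs from $L$ to $L'$ and is joined to its neighbours by arcs crossing the strips. Because $L$ and $L'$ are not parallel, their separation grows linearly, so the $i$-th segment $\sigma_i$ satisfies $\operatorname{length}(\sigma_i) \asymp i$. Choosing $x_0, x_1, \dots$ on $\wt{\Lambda}$ with $x_n$ lying past the first $n$ segments, the intrinsic distance is an arithmetic sum $d_{\wt{\Lambda}}(x_0, x_n) \asymp \sum_{i \le n} i \asymp n^2$, while the whole stack lies in a region of $\wt{X}$ of diameter $\asymp n$, so $d_{\wt{X}}(x_0, x_n) \asymp n$. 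Therefore $d_{\wt{\Lambda}} \asymp d_{\wt{X}}^2$ along this family, $\pi_1\Lambda$ is quadratically distorted, and $\wt{\Lambda}$ fails to be quasi-isometrically embedded.

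The hard part will be the middle step: extracting from the combinatorial definition of a dilated wall the clean picture of a wall shuttling between two non-parallel attaching lines, together with the two matching estimates $\operatorname{length}(\sigma_i) \asymp i$ and $d_{\wt{X}}(x_0, x_n) \asymp n$. The lower bound needs the slopes of $L$ and $L'$ around the dilating cycle to be genuinely distinct and the segments to be monotonically lengthening rather than cancelling; the upper bound needs the stack of segments, and the translates of $P$ they may pass through, to telescope into a region of linear diameter rather than spreading through the Bass--Serre tree. Both reduce to bookkeeping with the slope data of the edge maps along the dilating cycle, and this is where the equitable-set origin of the walls and the precise definition of dilation from Section~\ref{Contracting} are used.
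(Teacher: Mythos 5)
Your top-level reduction is exactly the paper's: by Theorem~\ref{mainB} it suffices to show that a dilated wall is not quasi-isometrically embedded (this is precisely Theorem~\ref{nonQIembed}), and the goal of quadratic distortion modeled on Example~\ref{spiralExmp} also matches. However, the geometric core of your sketch rests on a false structural claim. You assert that the dilated wall meets a single plane $P$ in ``a stack of parallel segments'' shuttling between two attaching lines $L, L'$. In fact $\wt{\Lambda} \cap \wt{X}_{\wt{v}}$ is a \emph{single} line (Section~\ref{Contracting}), and since the wall is an embedded tree whose image in the Bass--Serre tree is a subtree, it never revisits a vertex space: the spiral necessarily places each successive segment in a \emph{different} plane. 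This is not cosmetic. In your one-plane picture the extrinsic upper bound is nearly trivial (``the whole stack lies in a region of diameter $\asymp n$''), whereas in the true multi-plane picture that upper bound is the hard half of the proof: the paper must build an explicit two-step path homotopy (Stage~4 of the proof of Theorem~\ref{nonQIembed}) that pushes each diagonal segment onto the attaching lines and then cancels, inside each cylinder $Y_i$, the wrapping numbers of consecutive pushed segments. That cancellation works only because the wrapping exponents $s(mn+i)$ were chosen in Stage~2 to satisfy the two-sided inequality $(\star)$, whose lower half forces the linear growth of segments (your intrinsic lower bound) and whose upper half bounds the leftover wrapping (your extrinsic upper bound). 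These two requirements pull in opposite directions, and threading them simultaneously is the substance of the argument; it does not reduce to ``bookkeeping with the slope data.''

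Second, your passing remark that ``the slopes of $L$ and $L'$ around the dilating cycle need to be genuinely distinct'' conceals a step that slope bookkeeping cannot supply: one must prove the dilating loop makes a diagonal turn in \emph{some} vertex space at all. The paper's Claim inside the proof of Theorem~\ref{nonQIembed} handles this by contradiction: if the loop were everywhere straight (all relevant attaching lines parallel), then $R(z)$ would equal the modulus of a Baumslag--Solitar-type relation $t^{-1}r^{n}t = r^{\pm m}$ inside $G$; since $G$ acts freely on a CAT(0) cube complex (it has an equitable set), the result of Haglund cited as \cite{HaglundSemiSimple} excludes $n \neq \pm m$, forcing $R(z) = \pm 1$, contradicting dilation. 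Without this external input (or a substitute), your construction has no starting ``triangle'' and the everywhere-straight case is simply unaddressed. So: right skeleton and right reduction, but the two estimates you defer are exactly where the proof lives, and one of them requires an idea --- Haglund's theorem --- that is absent from your sketch.
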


\noindent Example \ref{spiralExmp} demonstrates that quasi-isometrically embedded walls are not a necessary condition for finite dimensionality.

\subsection{Tubular Groups and their Cubulations} \label{subsect:tubgroups}

Let $G$ be a tubular group with associated tubular space $X$.
Let $X_v$ and $X_e$ denote vertex and edge spaces in this graph of spaces, where $v$ and $e$ are a vertex and edge in
the underlying graph $\Gamma$. Each $X_v$ is homeomorphic to the torus $S^1 \times S^1$ and each $X_e$ is homeomorphic to the cylinder $S^1 \times [-1,1]$.
Choosing an orientation for an edge $e$, let $\ola{e}, \ora{e}$ denote the initial and terminal vertices, let $\overleftarrow{X}_{e}, \overrightarrow{X}_{e}$ denote the corresponding boundary circles in $X_e$, let $\ola{f}_e : \ola{X}_e \rightarrow X_{\ola{e}}$ and let $\ora{f}_e : \ora{X}_e \rightarrow X_{\ora{e}}$ denote the attaching maps.
Let $\widetilde{X}_{\widetilde{v}}$ and $\widetilde{X}_{\widetilde{e}}$ denote vertex and edge spaces in the universal cover where $\widetilde{v}$ and $\widetilde{e}$ are a vertex and edge in the Bass-Serre tree $\widetilde{\Gamma}$.
There is a $G$ action on both $\wt{X}$ and $\wt{\Gamma}$ such that $g\wt{X}_{\wt{v}} = \wt{X}_{g\wt{v}}$.

Let $\alpha, \beta : S^1 \rightarrow S^1 \times S^1$ be a transverse pair of geodesic immersed circles in a torus.
The \emph{geometric intersection number} of $\alpha, \beta$ is the number of points $(s,t) \in S^1 \times S^1$ such that $\alpha(s) = \beta(t)$.
Let $[\alpha], [\beta]$ be a pair of homotopy classes of immersed circles in the torus.
The \emph{geometric intersection number} of $[\alpha], [\beta]$ is the geometric intersection number for any choice of transverse geodesic immersed circles representing these homotopy classes.
Let $\#[\alpha, \beta]$ denote the geometric intersection number.
If $B = \{\beta_i \}$ is a set of geodesic circles then let $\#[\alpha, B] := \sum_i \#[ \alpha, \beta_i ]$.
Viewing $[\alpha], [\beta]$ as elements of $\pi_1X_v \cong \mathbb{Z}^2$, we recall from \cite{Wise13} that $\#[\alpha, \beta] = \det \big[[\alpha], [\beta]\big]$.

An \emph{equitable set} for $G$ is a collection of sets, $\{S_v \}_{ v \in V(\Gamma)}$,
where each $S_v$ is a finite set of geodesic immersed circles, $\alpha: S^1 \rightarrow X_v$,
such that $\langle [\alpha] \mid \alpha \in S_v \rangle$ generates a finite index subgroup of $\pi_1X_v$ and
$\#[\overleftarrow{f}_{e}, S_{\overleftarrow{e}}] = \#[\overrightarrow{f}_{e}, S_{\overrightarrow{e}}]$ for all edges $e$ in $\Gamma$.
It is often convenient to allow $S_v$ to contain ``repeats'', i.e. distinct elements that are homotopic.
It is assumed, however, that distinct parallel circles in a vertex space have disjoint image.

The main theorem proven in \cite{Wise13} is

\begin{thm}[Wise]
 A tubular group $G$ acts freely on a CAT(0) cube complex if and only if there is an equitable set for $G$.
\end{thm}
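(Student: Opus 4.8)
The plan is to prove both implications via Sageev's duality between wallspaces and CAT(0) cube complexes, using the Bass--Serre tree $\wt{\Gamma}$ to convert the combinatorics of walls into linear algebra over $\mathbb{Z}^2$ through the identity $\#[\alpha,\beta]=\det\big[[\alpha],[\beta]\big]$.

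For the implication that an equitable set produces a free action, I would first promote $\{S_v\}$ to a genuine family of immersed walls. Inside each torus $X_v$ the circles of $S_v$ supply the horizontal pieces of the walls; the arcs between consecutive intersections of these circles with an attaching circle are to be joined, through the edge cylinders $X_e$, by vertical arcs. The equitable balancing condition $\#[\overleftarrow{f}_e, S_{\overleftarrow{e}}]=\#[\overrightarrow{f}_e, S_{\overrightarrow{e}}]$ says precisely that the two boundary circles of $X_e$ carry the same number of crossing points, so I may fix a bijection between them and glue the vertical arcs accordingly. This yields a finite graph $\Lambda$ and an immersion $\Lambda\to X$, and I would verify that it is $\pi_1$-injective and that $\wt{\Lambda}$ lifts to a two-sided embedding $\wt{\Lambda}\to\wt{X}$, so that $(\wt{X},\mathcal{W})$ is a wallspace. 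Applying Sageev's construction gives the cube complex $C(\wt{X},\mathcal{W})$ together with a $G$-action, and freeness reduces to showing that for every $g\neq 1$ some wall separates a fixed $0$-cube from its $g$-translate. I would argue by cases on the action on $\wt{\Gamma}$: if $g$ lies in a vertex group $\pi_1X_v\cong\mathbb{Z}^2$ then, since $\langle[\alpha]:\alpha\in S_v\rangle$ has finite index, the classes $[\alpha]$ span $\mathbb{Q}^2$ and cannot all be parallel to $g$, so some wall has $\det\big[[\alpha],g\big]\neq 0$ and is crossed by the axis of $g$; if $g$ acts hyperbolically on $\wt{\Gamma}$ it traverses an edge $\wt{e}$, and the walls through $\wt{X}_{\wt{e}}$ separate the two ends of that edge. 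Either way $g$ fixes no $0$-cube.

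For the converse, suppose $G$ acts freely on a CAT(0) cube complex $Y$ and pass to its hyperplane wallspace, each hyperplane carrying a codimension-$1$ stabilizer. Each vertex group $\mathbb{Z}^2=\pi_1X_v$ then acts freely on $Y$, so the hyperplanes meeting a $\mathbb{Z}^2$-orbit cut $X_v$ in a finite collection of immersed geodesic circles, which (with multiplicities, up to homotopy) I would take as $S_v$. Freeness of the $\mathbb{Z}^2$-action forces these classes to span a finite-index subgroup, for otherwise some primitive direction would meet no wall and the corresponding infinite-order element would fix a $0$-cube. The balancing condition is then automatic: a hyperplane that meets the cylinder $X_e$ enters across one boundary circle and exits across the other, contributing equally to the two crossing counts, whence $\#[\overleftarrow{f}_e, S_{\overleftarrow{e}}]=\#[\overrightarrow{f}_e, S_{\overrightarrow{e}}]$. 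This exhibits an equitable set.

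The main obstacle I anticipate is the forward direction: distilling clean finite combinatorial data from an arbitrary free action, in particular showing that finitely many homotopy classes of circles suffice and that they genuinely span a finite-index subgroup, which requires controlling the interaction of the hyperplane stabilizers with each $\mathbb{Z}^2$ and excluding unseparated directions. A secondary difficulty lies in the construction direction, where the gluing of vertical arcs must be performed so that the resulting walls remain $\pi_1$-injective and lift to two-sided embeddings rather than self-osculating or self-crossing objects; the determinant identity is what keeps this bookkeeping linear and therefore tractable.
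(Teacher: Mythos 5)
You should know at the outset that the paper does not prove this statement: it is quoted from \cite{Wise13}, and the paper only records the construction behind the ``if'' direction (equitable set $\to$ immersed walls $\to$ wallspace $\to$ Sageev dual complex). Measured against that construction, your ``if'' direction has a concrete and fatal gap: you build only the \emph{horizontal} walls (circles of $S_v$ joined by arcs through the cylinders) and omit the \emph{vertical} walls --- the circle $S^1 \times \{0\} \subseteq S^1 \times [-1,1]$ in each edge space --- which the construction explicitly includes. This is not a technicality. An arc of a horizontal wall crosses $\wt{X}_{\wt{e}} \cong \mathbb{R} \times [-1,1]$ from one boundary line to the other, so it cuts the strip into two pieces each of which meets \emph{both} boundary lines; only the vertical wall $\mathbb{R} \times \{0\}$ separates the two sides of an edge space. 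Thus your hyperbolic case (``the walls through $\wt{X}_{\wt{e}}$ separate the two ends of that edge'') appeals to walls you never constructed, and the failure is real: for $G = \la a,b,t \mid [a,b],\, tat^{-1}=a \ra$ with equitable set $\{a,b\}$, placing the circle $a$ at a $b$-coordinate avoided by the axis of $t$ and choosing straight arcs produces horizontal walls none of which separates a point $x_0$ on the axis from $tx_0$, so $t$ fixes the $0$-cube determined by $x_0$ and the action is not free; the vertical wall is exactly what restores freeness. Relatedly, your criterion ``some wall separates a fixed $0$-cube from its $g$-translate'' only shows $gx_0 \neq x_0$; it does not rule out $g$ fixing some other $0$-cube of $C(\wt{X},\mathcal{W})$. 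The correct argument shows every nontrivial $g$ \emph{skewers} a wall, so the number of walls separating $x_0$ from $g^n x_0$ is unbounded in $n$, whence $\la g \ra$-orbits are unbounded and no point is fixed; your determinant observation supplies this for vertex-group elements, and the vertical walls supply it for hyperbolic ones.

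The ``only if'' direction is the substantive half of Wise's theorem, and your sketch asserts exactly the steps that carry the load. The given action is free but in no way cocompact, so ``the hyperplanes meeting a $\mathbb{Z}^2$-orbit cut $X_v$ in a finite collection of immersed geodesic circles'' is not a construction: you must specify which finitely many $G_v$-orbits of hyperplanes are used, with what multiplicities in $S_v$, and why finitely many suffice. Worse, your balancing argument treats hyperplanes of the abstract complex $Y$ as if they were walls in $\wt{X}$ restricting to arcs in the cylinder $X_e$; no such identification is available (there is only an equivariant map $\wt{X} \to Y$, and hyperplane preimages need not respect the edge-space structure). What actually yields the balance is that $\#[\ola{f}_e, S_{\ola{e}}]$ and $\#[\ora{f}_e, S_{\ora{e}}]$ must each be identified with one and the same intrinsic quantity: a count of $\la c \ra$-orbits of hyperplanes skewered by the edge element $c$, whose two images in the vertex groups are conjugate in $G$; the multiplicities in $S_v$ must be engineered so that geometric intersection numbers compute precisely this count, which is where $\#[\alpha,\beta] = \det\big[[\alpha],[\beta]\big]$ and an analogue of Lemma \ref{lemma:CyclicMatching} do real work. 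You correctly flag this as the main obstacle, but flagging it does not fill it: as written, the converse is an outline, and the forward direction is incorrect as stated.
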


\noindent As we repeatedly use this construction, here is an outline of how an equitable set provides a CAT(0) cube complex with a free $G$-action.

Let each $\alpha$ in the equitable set be given by a distinct copy of $S^1$ mapping into $X$.
Taking the disjoint union of these circles yields a map $\bigsqcup_{\alpha \in \bigcup S_v} S^1 \rightarrow X$.
As $\#[\overleftarrow{f}_{e}, S_{\overleftarrow{e}}] = \#[\overrightarrow{f}_{e}, S_{\overrightarrow{e}}]$, we can choose a bijection between the intersection points of $\overleftarrow{f}_{e}$ and the intersection points of $\overrightarrow{f}_e$.
Extend $\sqcup S^1$ to a space homeomorphic to a graph by attaching the endpoints of an arc to each pair of corresponding intersection points in our equitable set.
The connected components of the resulting graph are \emph{horizontal immersed walls}.
The immersions of the circles into $X$ can be extended along the arcs, by mapping each arc into the associated edge space.
For each edge space there is a \emph{vertical immersed wall}, which is a circle embedding along $S^1 \times \{0\} \subseteq S^1 \times [1,-1]$.
The complete collection of immersed walls obtained in this manner are the \emph{immersed walls obtained from the equitable set}.
All immersed walls mentioned in this paper will be obtained from the above construction.  
Note further, that an equitable set produces a finite set of immersed walls.

Let $\Lambda$ be an immersed wall.
Its universal cover $\widetilde{\Lambda}$ embeds in the universal cover $\widetilde{X}$.
%
%
\noindent A lift of $\wt{\Lambda}$ to $\wt{X}$ has an image contained in a product neighbourhood homeomorphic to $\wt{\Lambda} \times [-1, 1]$, so $\wt{X} - \wt{\Lambda}$ has exactly two connected components. 
The full set of such lifts and obtain a wallspace $(\widetilde{X}, \mathcal{W})$ which, via Sageev's construction \cite{Sageev95} gives a $G$-action on a CAT(0) cube complex (see \cite{HruskaWise13} for a comprehensive overview of Sageev's construction).

\begin{rem} \label{choices}
 In the above construction, the resulting dual CAT(0) cube complex depends on the choice of correspondence in each edge space, as well as how the interior of each arc embeds in the cylinder.
 Example \ref{arcChoiceDependent} shows finite dimensionality of the cubulation is dependent on the choice of correspondence between the intersection points at either end of the edge space. 
 By contrast, Theorem \ref{mainB} and the definition of a dilated immersed wall imply finite dimensionality is invariant of the subsequent choice of immersion of the interior of the arcs in the edge spaces.
 In \cite{Woodhouse214} we use the results presented here to show that acting freely on a finite dimensional CAT(0) cube complex is equivalent to virtual specialness for tubular groups
\end{rem}

\subsection{Acknowledgements}

I would like to thank Emily Stark who motivated me to find Example \ref{example1}; Dani Wise for support and advice; Mark Hagen for invaluable feedback; Michah Sageev, Eduardo Martinez-Pedroza, and Mathieu Carette for helpful conversations.

\section{A Motivating Example} \label{Motivation}

The next example motivates the definitions and theorems given later in this paper that identify which immersed walls give infinite cubulations.

\begin{exmp}\label{example1}
 Consider the tubular group $G = \langle a,b,t \mid [a,b] = 1, t a t^{-1} = b \rangle$.
 This is a cyclic HNN extension of $\mathbb{Z}^2 = \la a, b \ra.$
 The associated graph of spaces has a torus vertex space $X_v$, and a cylindrical edge space $X_e$ that is attached along geodesic paths associated to $a$ and $b$.
 Consider the immersed wall given by the equitable set $\{ a, ab^2 \}$ and the choice of arcs shown at right in Figure~\ref{fig:spaceX}.

 \begin{figure}
  \includegraphics[scale=0.2]{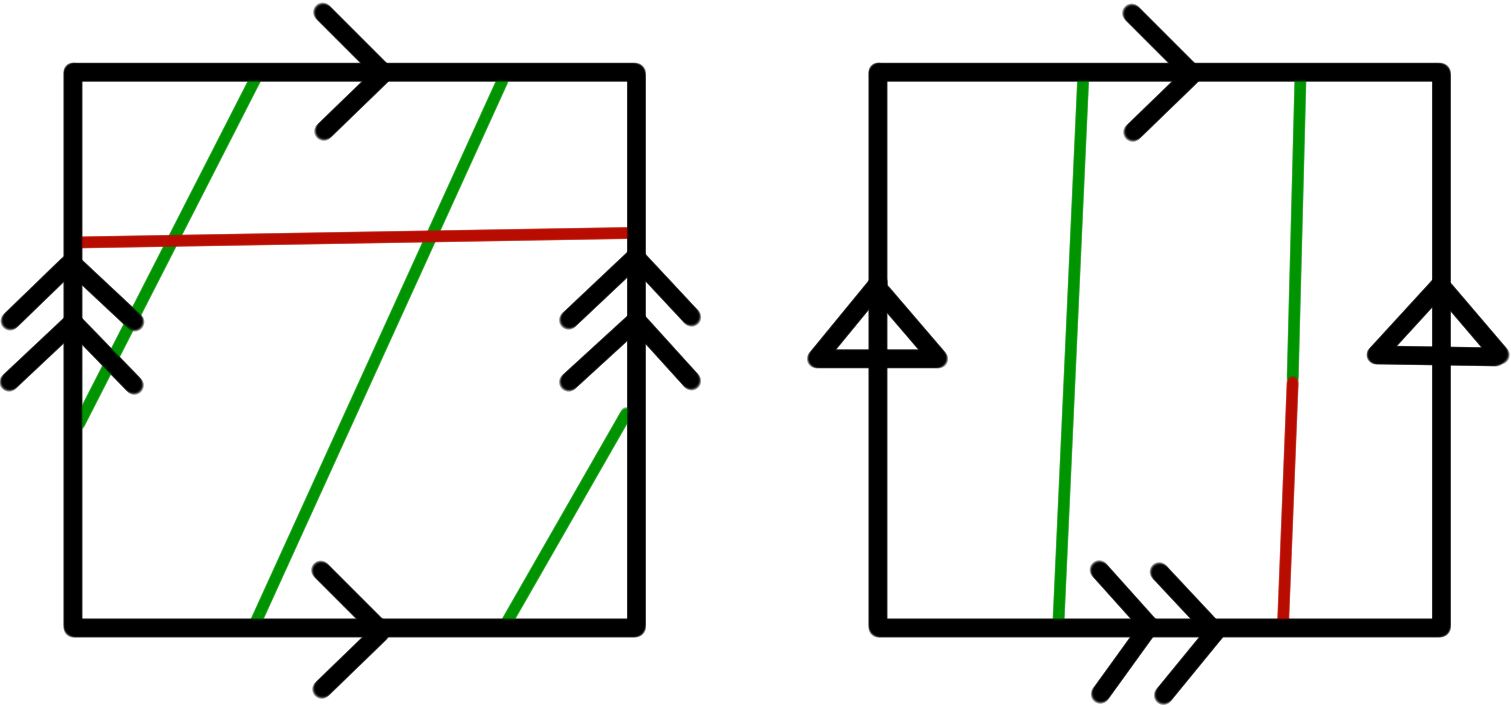}
 \caption{The graph of spaces $X$, with an immersed wall given by the equitable set.}
 \label{fig:spaceX}
 \end{figure}

 An infinite set of pairwise crossing walls in $(\wt{X}, \mathcal{W})$ will be exhibited
 to demonstrate that $C(\wt{X}, \mathcal{W})$ is infinite dimensional.
 The immersed wall in this example is the prototype for a general dilated immersed wall, which will be discussed in Section \ref{Contracting}.
 Choose a vertex space $\wt{X}_{\wt{v}_1}$ in the universal cover.
 The circle $ab^2$ in $X_v$ is covered by an infinite set of
 parallel lines in $\wt{X}_{\wt{v}_1}$.
 Index these lines consecutively by the elements of $\mathbb{Z}$.
 See the top of Figure \ref{fig:intersectionPattern}.

 Choose a vertex space $\wt{X}_{\wt{v}_2}$ adjacent to $\wt{X}_{\wt{v}_1}$ via an edge space attached
 along an $a$-line in $\wt{X}_{\wt{v}_1}$.
 The walls indexed $2n$ in $\wt{X}_{\wt{v_1}}$ travel
 through to $\wt{X}_{\wt{v}_2}$ where they either cover $ab^2$ or $a$.
 By adding $+1$ to each index if necessary, assume that they cover $ab^2$.
 Then the walls indexed $2n+1$ also travel through to $\wt{X}_{\wt{v}_2}$ where they cover $a$.
 Observe the wall indexed $2n$ in $\wt{X}_{\wt{v}_1}$
 can be indexed $n$ in $\wt{X}_{\wt{v}_2}$ and the indexing of the lines remains consecutive.
 This follows from how the immersed wall in the base space determines the walls in the universal cover
 (see Figure \ref{fig:intersectionPattern}).

\begin{figure}
  \begin{overpic}[width=.6\textwidth,tics=10]{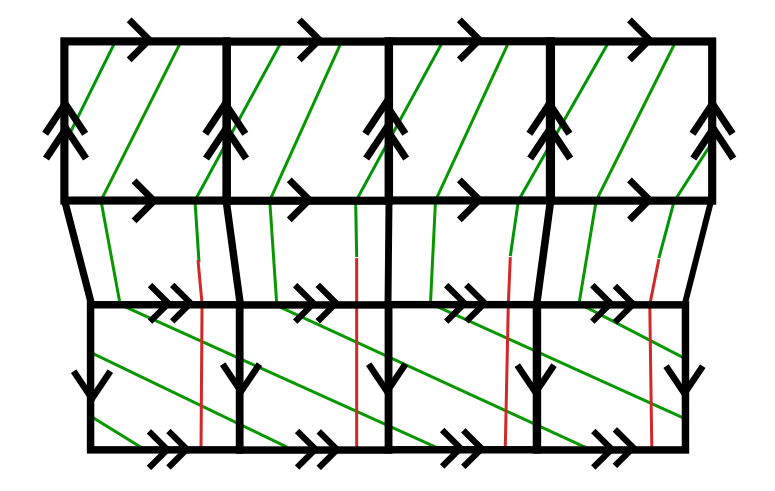}
     \put(9,61){$-1$}
     \put(22,61){$0$}
     \put(35,61){$1$}
     \put(44,61){$2$}
     \put(56,61){$3$}
     \put(65,61){$4$}
     \put(77,61){$5$}
     \put(87,61){$6$}
     \put(32,-1){$-1$}
     \put(56,-1){$0$}
     \put(75,-1){$1$}
     \put(12,-1){$-2$}
     \put(91,5){$2$}
     \put(91,12){$3$}
     \put(-4,46){$\wt{X}_{\wt{v}_1}$}
     \put(0,27){$\wt{X}_{\wt{e}}$}
     \put(1,11){$\wt{X}_{\wt{v}_2}$}
   \end{overpic}
  \caption{\label{fig:intersectionPattern}A portion of the universal cover $\wt{X}$, consisting of parts of $\wt{X}_{\wt{v}_1}$, $\wt{X}_{\wt{e}}$, and $\wt{X}_{\wt{v}_2}$. The numbers at the top show the indexing of the walls in $\wt{X}_{\wt{v}_1}$ and the numbers at the bottom show the indexing of the walls in $\wt{X}_{\wt{v}_2}$.}
\end{figure}
%
 The odd indexed walls in $\wt{X}_{\wt{v}_1}$ are now parallel to $a$ in $\wt{X}_{\wt{v}_2}$.
 Thus the even walls in $\wt{X}_{\wt{v}_1}$ cross the odd walls in $\wt{X}_{\wt{v}_1}$.

 We will now show that the walls indexed $1,2,4,8, \ldots, 2^n, \ldots$ in $\wt{X}_{\wt{v}_1}$ are pairwise intersecting.
 Indeed, as shown above, the wall numbered $1$ intersects all the even walls in
 $\wt{X}_{\wt{v}_1}$.
 To repeat the above argument, consecutively index the walls intersecting $\wt{X}_{\wt{v}_2}$ as $ab^2$-lines,  by the elements of $\mathbb{Z}$, such that the walls indexed $2,4,8, \ldots, 2^n, \ldots$ in $\wt{X}_{\wt{v}_1}$ are then indexed $1,2,4, \ldots ,2^{n-1}, \ldots$ in $\wt{X}_{\wt{v}_2}$.
 Arguing inductively, given the vertex space $\wt{X}_{\wt{v}_i}$ containing walls parallel to $ab^2$ indexed $1,2,4,8, \ldots, 2^n, \ldots$, there exists an adjacent vertex space $\wt{X}_{\wt{v}_{i+1}}$ where the wall indexed $1$ will cross all the other walls which can be re-indexed $1,2,4,8, \ldots, 2^n, \ldots$ in
 $\wt{X}_{\wt{v}_{i+1}}$.
 Therefore there is an infinite family of pairwise crossing walls intersecting $\wt{X}_{\wt{v_1}}$.
 It will be shown in Proposition \ref{WeakInfDimInfCube} that such a family determines an infinite cube.
\end{exmp}

\begin{rem}
 The existence of an infinite set of pairwise crossing walls in $\wt{X}$ implies the non-separability of the associated codimension-1 subgroup by Scott's criterion \cite{Scott78}.
 The tubular group of Example \ref{example1} is the $3$-manifold group that Burns, Karrass and Solitar proved was not subgroup separable in \cite{BurnsKarrassSolitar87}.
  The group that they used to prove non subgroup separability is a subgroup of the stabilizer of this wall.
  Niblo and Wise reproved this result in \cite{NibloWiseNonEngulfing} by finding a family of subgroups that are not contained in any finite index subgroup. The stabilizer of the wall belongs to that family.
\end{rem}

\section{Infinite Dimensional Cubulations Must Contain Infinite Cubes} \label{InfDimCubes}

 \noindent The goal of this section is to prove Proposition \ref{WeakInfDimInfCube}, a weaker version of implication (\ref{infDimensional})$\Rightarrow$(\ref{infCube}) from Theorem \ref{mainB}.

\begin{rem} \label{intersections}
 A distinction will be made between two types of intersections occurring between a pair of walls in $(\wt{X}, \mathcal{W})$ (see Figure \ref{fig:intersectionTypes}).
 \begin{description}
  \item[Regular] An intersection in a vertex space as between non-parallel lines in $\mathbb{R}^2$.
  \item[Non-Regular] An intersection in a vertex space as overlapping parallel lines in $\mathbb{R}^2$, or an intersection occurring in an edge space.
 \end{description}

 \noindent Note that walls may intersect more than once, and a given pair of walls may have multiple regular and non-regular intersections.
 Note that if $\wt{\Lambda}$ and $\wt{\Lambda}'$ regularly intersect in $\wt{X}_{\wt{v}}$, then $\wt{\Lambda}$ and $g\wt{\Lambda}'$ also regularly intersect in $\wt{X}_{\wt{v}}$ for all $g \in G_{\wt{v}}$.
 Two walls $\wt{\Lambda}_1, \wt{\Lambda}_2$ that intersect a vertex space $\wt{X}_{\wt{v}}$, but do not intersect regularly are \emph{parallel in $\wt{X}_{\wt{v}}$}.

 \begin{figure}
 \includegraphics[scale=0.3]{./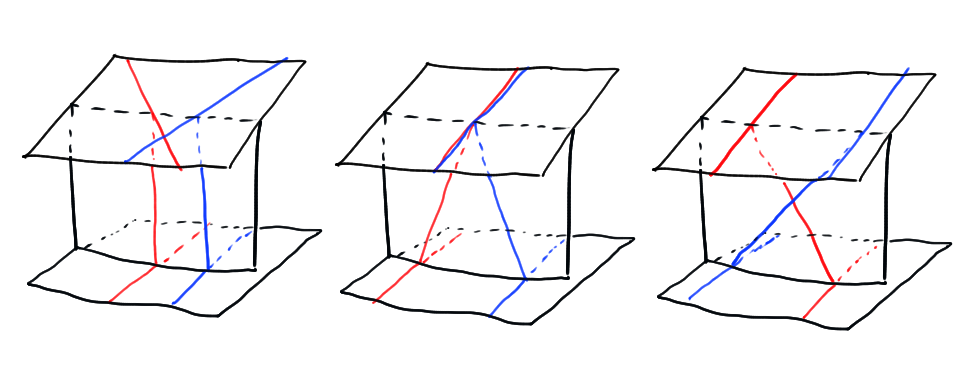}
 \caption{A regular intersection on the left and non-regular intersections on the right and center.}
 \label{fig:intersectionTypes}
\end{figure}

\end{rem}

 The following short lemma is used extensively throughout the paper.

 \begin{lem} \label{lemma:paralem1}
  Let $\wt{\Lambda}_1$ and $\wt{\Lambda}_2$ be walls in $(\widetilde{X}, \mathcal{W})$ that intersect $\wt{X}_{\wt{v}}$.
  Let $\widetilde{Y} \subset \widetilde{X}$ be a subtree of spaces containing $\wt{X}_{\wt{v}}$ such that $\wt{\Lambda}_1, \wt{\Lambda}_2$ have no regular intersections in $\wt{Y}$.
  Then every vertex or edge space in $\widetilde{Y}$ that intersects $\wt{\Lambda}_1$ also intersects $\wt{\Lambda}_2$.
 \end{lem}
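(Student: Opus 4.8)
The plan is to transfer the statement to the Bass-Serre tree and induct outward from $\wt{X}_{\wt{v}}$. Let $\pi\colon \wt{X}\to \wt{\Gamma}$ be the map collapsing each vertex space to a vertex and each edge space to an edge of the Bass-Serre tree. For $i=1,2$ let $T_i\subseteq\wt{\Gamma}$ be the set of vertices and edges whose spaces meet $\wt{\Lambda}_i$; since $\wt{\Lambda}_i$ is connected, $\pi(\wt{\Lambda}_i)$ is a connected subset of $\wt{\Gamma}$, so $T_i$ is a subtree. Writing $Y\subseteq\wt{\Gamma}$ for the subtree underlying $\wt{Y}$, the conclusion is exactly $T_1\cap Y\subseteq T_2\cap Y$. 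As $T_1\cap Y$ is a subtree containing $\wt{v}$ (both walls meet $\wt{X}_{\wt{v}}$, so $\wt{v}\in T_1\cap T_2\cap Y$), I would prove this inclusion by induction on the distance in $T_1\cap Y$ from $\wt{v}$, the inductive hypothesis being that a vertex $\wt{w}$ lies in $T_2$, i.e.\ that both walls meet $\wt{X}_{\wt{w}}$.

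The heart of the argument is a single propagation step across one edge. Fix a vertex $\wt{w}\in T_1\cap T_2\cap Y$. In $\wt{X}_{\wt{w}}\cong\mathbb{R}^2$ each wall meets the vertex space in a family of mutually parallel lines, as forced by the product neighbourhood embedding of a wall. Since $\wt{\Lambda}_1$ and $\wt{\Lambda}_2$ have no regular intersection in $\wt{Y}$, no line of $\wt{\Lambda}_1$ crosses a line of $\wt{\Lambda}_2$ in $\wt{X}_{\wt{w}}$; because two lines of distinct slope in $\mathbb{R}^2$ always cross, all of these lines share a common slope $s$. Now let $\wt{e}$ be an edge of $Y$ at $\wt{w}$ with $\wt{e}\in T_1$, and let $L_{\wt{e}}\subseteq\wt{X}_{\wt{w}}$ be the line along which $\wt{X}_{\wt{e}}$ is attached. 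A wall enters $\wt{X}_{\wt{e}}$ precisely along an arc whose endpoint on $L_{\wt{e}}$ is a transversal crossing of a wall line with $L_{\wt{e}}$; here I would invoke the equitable-set construction, in which a bijection is chosen between \emph{all} intersection points of the two attaching maps, so that every transversal crossing of a wall line with $L_{\wt{e}}$ is the endpoint of such an arc. Thus, since $\wt{e}\in T_1$, some line of $\wt{\Lambda}_1$ in $\wt{X}_{\wt{w}}$ crosses $L_{\wt{e}}$ transversally, so the common slope $s$ differs from that of $L_{\wt{e}}$; hence every line of $\wt{\Lambda}_2$ in $\wt{X}_{\wt{w}}$ also crosses $L_{\wt{e}}$ transversally, and at each such crossing the construction supplies an arc of $\wt{\Lambda}_2$ entering $\wt{X}_{\wt{e}}$. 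Therefore $\wt{e}\in T_2$, and following this arc to the opposite boundary line shows that $\wt{\Lambda}_2$ meets the far vertex space as well; if that vertex lies in $Y$ it is thereby placed in $T_2$, which advances the induction.

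The main obstacle is the bookkeeping underlying this step rather than any deep difficulty: one must be certain that ``no regular intersection'' genuinely forces a common slope (using that a wall meets a vertex space in parallel lines and that distinct slopes force a regular crossing in $\mathbb{R}^2$), and that the arcs of the immersed wall account for every transversal crossing of a wall line with an attaching line, which is exactly the content of the equitable-set construction via the chosen bijection of intersection points. Granting these two facts, the induction runs cleanly over the connected subtree $T_1\cap Y$ and yields $T_1\cap Y\subseteq T_2\cap Y$, which is the assertion of the lemma. By symmetry one in fact obtains $T_1\cap Y=T_2\cap Y$, though only the one inclusion is needed.
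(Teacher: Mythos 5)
Your proposal is correct and takes essentially the same approach as the paper: the paper's proof also inducts outward from $\wt{X}_{\wt{v}}$, observing that the absence of regular intersections forces the two walls to meet each vertex space in parallel (possibly overlapping) lines, which therefore enter an identical set of adjacent edge spaces and hence an identical set of adjacent vertex spaces. Your write-up simply makes explicit the bookkeeping (the Bass-Serre tree formulation, the common-slope argument, and the equitable-set arc construction) that the paper's terse proof leaves implicit.
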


\begin{proof}
Let $\wt{X}_{\wt{u}}$ be a vertex space in $\wt{Y}$.
If $\wt{\Lambda}_1 \cap \wt{X}_{\wt{u}}$ and $\wt{\Lambda}_2 \cap \wt{X}_{\wt{u}}$ are a pair of parallel (possibly overlapping) lines in $\wt{X}_{\wt{u}}$ then $\wt{\Lambda}_1, \wt{\Lambda}_2$ intersect an identical set of edge spaces adjacent to $\wt{X}_{\wt{u}}$, and thus an identical set of vertex spaces adjacent to $\wt{X}_{\wt{u}}$.
Therefore it can be deduced inductively, starting with $\wt{X}_{\wt{v}}$, that $\wt{\Lambda}_1$ and $\wt{\Lambda}_2$ must intersect an identical set of vertex and edge spaces in $\wt{Y}$.
\end{proof}

\begin{lem} \label{lemma:Helly}
 Let $S$ be a finite set of walls in $(\widetilde{X}, \mathcal{W})$ that pairwise intersect.
 There exists a vertex space $\widetilde{X}_{\widetilde{v}}$ that intersects every wall in $S$.
\end{lem}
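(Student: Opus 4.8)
The plan is to pass from $\wt{X}$ to the Bass--Serre tree $\wt{\Gamma}$ and invoke the Helly property for subtrees. Let $p \colon \wt{X} \to \wt{\Gamma}$ be the projection sending each vertex space $\wt{X}_{\wt{v}}$ to $\wt{v}$ and each edge space $\wt{X}_{\wt{e}}$ to the edge $\wt{e}$. To each wall $\wt{\Lambda}$ I attach the subcomplex $T(\wt{\Lambda}) \subseteq \wt{\Gamma}$ spanned by all $\wt{v}$ and $\wt{e}$ with $\wt{X}_{\wt{v}} \cap \wt{\Lambda} \neq \emptyset$ or $\wt{X}_{\wt{e}} \cap \wt{\Lambda} \neq \emptyset$. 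The first thing to record is that $T(\wt{\Lambda})$ is a subtree: $\Lambda$ is connected, so its universal cover $\wt{\Lambda}$ is connected, and continuity of $p$ makes $p(\wt{\Lambda})$---hence $T(\wt{\Lambda})$---connected. I also want $T(\wt{\Lambda})$ to be \emph{edge-closed}: each time a (horizontal) wall meets an edge space it does so along an arc running from one boundary circle of the cylinder to the other, so it also meets both adjacent vertex spaces, and every edge of $T(\wt{\Lambda})$ therefore carries both of its endpoints.

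Next I translate the hypothesis. If two walls of $S$ meet in $\wt{X}$, the meeting point lies in a single vertex or edge space, whose image under $p$ lies in both associated subtrees; thus $\{T(\wt{\Lambda})\}_{\wt{\Lambda} \in S}$ is a finite family of pairwise-intersecting subtrees of $\wt{\Gamma}$. Now I apply the Helly property for subtrees of a tree: such a family has nonempty total intersection. I would prove this in the usual way---reduce to three subtrees and take the median of one point chosen from each pairwise intersection, using convexity of subtrees to see that the median lies in all three---and then induct by replacing $T_1, T_2$ with the subtree $T_1 \cap T_2$. Picking $c \in \bigcap_{\wt{\Lambda} \in S} T(\wt{\Lambda})$, if $c$ is a vertex $\wt{v}$ then $\wt{X}_{\wt{v}}$ meets every wall and we are finished; if $c$ lies in the interior of an edge, edge-closedness puts both of that edge's endpoints into every $T(\wt{\Lambda})$, and either endpoint supplies the desired vertex space.

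I expect the genuine content to be twofold: the Helly property itself (routine, but it is precisely where the tree structure is used) and the promotion of a common edge to a common vertex, which rests on the geometric fact that a wall crossing a cylinder crosses the tori on both sides---this is exactly what prevents the supports from being stray arcs dangling at edge midpoints. The one place to be careful is a \emph{vertical} wall, whose support is a single edge meeting no vertex space; however, two distinct vertical walls sit in disjoint edge spaces and so cannot intersect, so at most one wall of $S$ can be vertical, and this degenerate possibility must be isolated and treated on its own rather than through the subtree argument.
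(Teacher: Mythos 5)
Your proof takes exactly the paper's route: project each wall to a connected subset (subtree) of the Bass--Serre tree, apply the Helly property for finitely many pairwise-intersecting subtrees, and convert a common point into a common vertex space. You in fact supply more detail than the paper's three-line proof, which silently assumes the images are edge-closed subcomplexes of the tree (i.e.\ that the walls are horizontal, as they are in every application of the lemma) and so never confronts the edge-to-vertex promotion or the vertical-wall degeneracy you correctly flag --- note, though, that for a genuinely vertical wall the statement itself fails (such a wall meets no vertex space at all), so that case can only be excluded, not ``treated.''
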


\begin{proof}
 Each wall in $S$ maps to a connected subset of the Bass-Serre tree. These subsets pairwise intersect, therefore by
 application of Helly's Theorem for trees, there exists some vertex in the total intersection.
 Therefore every wall in $S$ intersects the corresponding vertex space.
\end{proof}

\noindent A set of horizontal walls in $( \widetilde{X}, \mathcal{W} )$ that pairwise regularly intersect is \emph{crossing}.

\begin{prop} \label{WeakInfDimInfCube}
 If there are crossing sets of arbitrary finite cardinality in $(\wt{X},\mathcal{W})$, then there exists an infinite cube in $C(\wt{X},\mathcal{W})$. 
\end{prop}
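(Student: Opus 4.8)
The plan is to reduce the statement to the construction of a single infinite family of pairwise regularly-crossing walls, and then to run a compactness argument on the Bass--Serre tree that produces such a family out of the given finite crossing sets. The final step is the routine one, so I would record it first. Given an infinite family $\{\wt{\Lambda}_1, \wt{\Lambda}_2, \ldots\}$ of pairwise regularly-crossing walls, fix a base vertex of $C(\wt{X},\mathcal{W})$ (a consistent orientation of all of $\mathcal{W}$) and let $c_n$ be the cube obtained by flipping arbitrary subsets of $\{\wt{\Lambda}_1, \ldots, \wt{\Lambda}_n\}$. Since the $\wt{\Lambda}_i$ pairwise cross, all four quarter-spaces of each pair are nonempty, so every such choice of halfspaces is pairwise consistent and each $c_n$ is a genuine $n$-cube of the dual complex; by construction $c_n \subseteq c_{n+1}$, and $\bigcup_n c_n$ is an infinite cube. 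It therefore suffices to upgrade ``crossing sets of arbitrary finite cardinality'' to ``an infinite crossing set.''

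This upgrade is the heart of the argument and cannot be purely formal, since a graph may contain cliques of every finite size with no infinite clique; I must exploit the tubular structure. Using Lemma \ref{lemma:Helly}, each finite crossing set meets a common vertex space, and because $\Gamma$ is finite there are only finitely many $G$-orbits of vertex spaces, so after translating by $G$ (which preserves regular crossing) I may assume that infinitely many of the given crossing sets, of unbounded size, all meet one fixed vertex space $\wt{X}_{\wt{v}_0}$. The key structural input is that the walls meeting any vertex space appear there as lines of only finitely many slopes, the slopes coming from the finite equitable set; hence at most boundedly many walls can pairwise regularly cross \emph{inside} a single vertex space. Consequently a large crossing set anchored at $\wt{X}_{\wt{v}_0}$ must realize most of its regular crossings elsewhere: by pigeonhole a definite proportion of its walls share a slope at $\wt{X}_{\wt{v}_0}$ and so are parallel there, and Lemma \ref{lemma:paralem1} forces the walls of such a parallel bundle to pass through a common sequence of edge and vertex spaces until their mutual regular crossings begin. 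This is precisely the spreading phenomenon exhibited in Example \ref{example1}.

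Iterating this observation along the Bass--Serre tree, I would build a tree of partial crossing configurations, each node being a finite path out of $\wt{v}_0$ together with the slope data realized along it, and declare a node admissible if it is realized by crossing sets of arbitrarily large cardinality. Finiteness of the slope set, together with the finiteness of the immersed-wall data and the $G$-action (which collapses the local pictures to finitely many types), is what makes this tree finitely branching, while the hypothesis of unbounded crossing sets guarantees that every level is nonempty. König's Lemma then yields an infinite branch, that is, a ray in $\wt{\Gamma}$ along which a nested sequence of crossing sets accumulates into an infinite crossing set, which feeds into the first step to produce the infinite cube. The main obstacle is exactly this finite-to-infinite promotion: the delicate point is arranging the bookkeeping---slopes, the edge space through which a parallel bundle exits, and the coherence of the accumulating crossings---so that the König tree is genuinely finitely branching, and here Lemma \ref{lemma:paralem1} and the finiteness furnished by the equitable set do the essential work.
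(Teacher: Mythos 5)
Your middle step---promoting arbitrarily large finite crossing sets to an infinite pairwise-crossing family---is essentially the paper's argument: Lemma \ref{lemma:Helly} plus cocompactness to anchor all the crossing sets at one vertex space $\wt{X}_{\wt{v}}$, finiteness of parallelism classes in $\wt{X}_{\wt{v}}$ to extract a parallel bundle with unbounded crossing subsets, and Lemma \ref{lemma:paralem1} to push that bundle to the vertex space where crossings occur and iterate. The paper runs this as a direct recursion (at each stage choosing a crossing pair whose intersection vertex is \emph{closest} to the anchor, so that one member of the pair crosses everything in the next bundle) rather than via K\"onig's Lemma; the minimal-distance choice is exactly the bookkeeping you flag as ``delicate'' and leave unresolved, but the idea is the same.

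The genuine gap is in the step you call routine. A vertex of $C(\wt{X},\mathcal{W})$ is a consistent orientation of \emph{all} walls of $\mathcal{W}$, so when you flip $\wt{\Lambda}_i$ starting from a base vertex you must check consistency not only against the other $\wt{\Lambda}_j$ (where pairwise crossing does suffice) but against every wall outside the family, and this can fail: if a wall $\wt{U}$ disjoint from $\wt{\Lambda}_i$ lies between the basepoint $x$ and $\wt{\Lambda}_i$, the base vertex orients $\wt{U}$ toward $x$, and the flipped halfspace of $\wt{\Lambda}_i$ is disjoint from that chosen halfspace of $\wt{U}$, so the flipped orientation is not a vertex of the dual complex at all. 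This is precisely the obstruction of Figure \ref{fig:InfCubeObstruction}, and the entire second half of the paper's proof is devoted to it: orient all walls toward $x \in \wt{\Lambda}_1 \cap \wt{X}_{\wt{v}}$, note that any wall blocking the reversal of $\wt{\Lambda}_{n+2}$ is parallel to $\wt{\Lambda}_{n+2}$, meets $\wt{X}_{\wt{v}}$, and lies between $x$ and $\wt{\Lambda}_{n+2}$, that there are finitely many such blockers, and that by Lemma \ref{lemma:paralem1} (with $\wt{Y}=\wt{X}$) each blocker still crosses every other $\wt{\Lambda}_j$; one then \emph{replaces} $\wt{\Lambda}_{n+2}$ by the blocker closest to $x$ to obtain $c_{n+1}$. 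Note that this repair leans on the special structure of the family the paper constructs---all its members intersect, and are parallel in, the common vertex space $\wt{X}_{\wt{v}}$---so your clean reduction ``it suffices to produce an infinite crossing set,'' which discards that structure, loses exactly the information needed to make the cubes nest: for an abstract infinite crossing family there is no reason a blocking wall should cross the remaining members, and your cubes $c_n$ need not exist as described.
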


\begin{proof}[Proof of Proposition \ref{WeakInfDimInfCube}]
Let $\{K_i\}$ be a sequence of crossing sets such that $|K_i| \rightarrow \infty$ as $i \rightarrow \infty$.
There are only finitely many wall orbits so assume that each $K_i$ consists exclusively of translates of a single wall $\wt{\Lambda}$.
By Lemma \ref{lemma:Helly}, for every $i$ there exists a vertex $\widetilde{v}_i$ in $\widetilde{\Gamma}$  such that every wall in $K_i$ intersects $\wt{X}_{\wt{v}_i}$. 
Since $G$ acts cocompactly on $\wt{\Gamma}$, after passing to a subsequence of $\{ K_i \}$, it can be assumed that the $\{\wt{v_i}\}$ lie in a single $G$-orbit, $G\wt{v}$. 
Choose $g_i$ such that  $g_i\widetilde{v}_i = \widetilde{v}$ and replace $K_i$ with $g_i^{-1} K_i$.
Therefore all walls in $K_i$ intersect the vertex space $\widetilde{X}_{\widetilde{v}}$ for each $i$.

There are only finitely many parallelism classes of lines in $\widetilde{X}_{\widetilde{v}}$ belonging to walls in $\mathcal{W}$, 
 so there must exist $Q_0$, a set of walls parallel in $\widetilde{X}_{\widetilde{v}}$, with the following property:
\emph{there is no upper bound on the size of crossing subsets of $Q_0$}.
Given $Q_0$, there must exist a crossing set $\{ \wt{\Lambda}_1, \wt{\Lambda}_1' \} \subset Q_0$.
The regular intersection points between $\wt{\Lambda}_1, \wt{\Lambda}_1'$ must be in vertex spaces other than $\wt{X}_{\wt{v}}$.
Choose $\wt{\Lambda}_1, \wt{\Lambda}_1'$ such that they intersect in $\widetilde{X}_{\widetilde{v}_1}$,
and the distance from $\widetilde{v}$ to $\widetilde{v_1}$ in $\widetilde{\Gamma}$ is minimized  over all such choices of intersection points and choices of $\wt{\Lambda}_1, \wt{\Lambda}_1'$.

By Lemma \ref{lemma:paralem1}, setting $\wt{Y} \subset \wt{X}$ to be the union of vertex and edge spaces between and including $\wt{v}$ and $\wt{v}_1$, every other wall in $Q_0$ also intersects $\widetilde{X}_{\widetilde{v_1}}$. 
Repeating the above argument there exists a set $Q_1$, of walls parallel in $\widetilde{X}_{\widetilde{v_1}}$, such that there is no upper bound on the size of crossing subsets.
Either $\wt{\Lambda}_1$ or $\wt{\Lambda}_1'$ will therefore intersect every wall in $Q_1$, so assume that it is $\wt{\Lambda}_1$.
Iterate this argument to obtain a sequence of walls $\{ \wt{\Lambda}_i \}_{i=1}^{\infty}$ that pairwise intersect.

\begin{figure}
 \includegraphics[scale=0.15]{./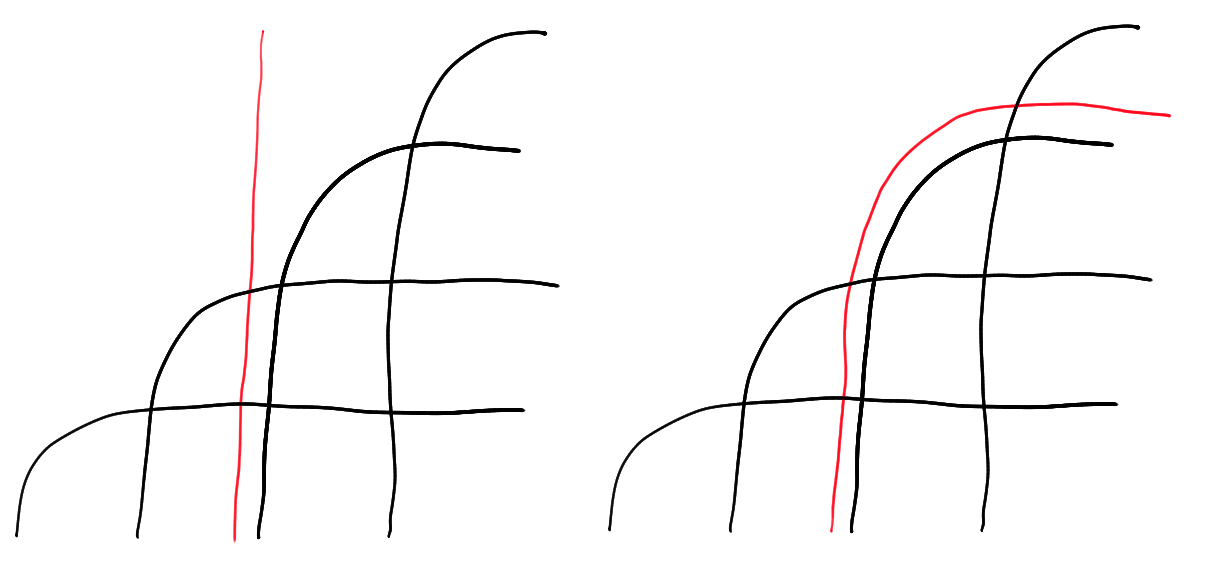}
 \caption{The potential obstruction to an infinite cube on the left, and the concluding situation on the right. Compare with the picture in \cite{HruskaWise13}.}
 \label{fig:InfCubeObstruction}
\end{figure}

Let $\{ \wt{\Lambda}_i \}_{i=1}^{\infty}$ be the (sub)sequence of walls re-indexed so that they are ordered consecutively in $\wt{X}_{\wt{v}}$. 
An ascending sequence of $n$-cubes $c_n$ in $C(\wt{X}, \mathcal{W})$ will now be constructed.
Orient all walls not intersecting $\wt{X}_{\wt{v}}$ towards $\wt{X}_{\wt{v}}$.
Let $c_0$ be the canonical $0$-cube given by orienting all walls towards some $x \in \wt{\Lambda}_1 \cap \wt{X}_{\wt{v}}$.
By induction, there is an ascending sequence of cubes $c_0 \subset \cdots \subset c_n$
dual to the walls $\wt{\Lambda}_1, \ldots, \wt{\Lambda}_{n+1}$.
Suppose that for some orientation of $\wt{\Lambda}_1, \ldots, \wt{\Lambda}_{n+1}$ the orientation of $\wt{\Lambda}_{n+2}$ could not be reversed (see Figure \ref{fig:InfCubeObstruction}).
This means that there exists a wall that is parallel to $\wt{\Lambda}_{n+2}$, that intersects $\wt{X}_{\wt{v}}$, and that lies between $x$ and  $\wt{\Lambda}_{n+2}$.
There can only exist finitely many such walls and, by Lemma \ref{lemma:paralem1}, setting $\wt{Y} = \wt{X}$ they all intersect $\wt{\Lambda}_i$ for $i \neq n+2$.
Replace $\wt{\Lambda}_{n+2}$ with the closest such wall to $x$ to obtain an $(n+1)$-cube $c_{n+1}$. 
\end{proof}

\section{Dilating Walls and Infinite Dimensional Cubulations} \label{Contracting}

 Let $\wt{X}_{\wt{v}}$ be a vertex space covering $X_v$.
 Identify $\wt{X}_{\wt{v}}$ with $\mathbb{R}^2$ so that a $G_{\wt{v}}$-orbit in $\wt{X}_{\wt{v}}$ is identified with $\mathbb{Z}^2 \subset \mathbb{R}^2$.
 Let $\wt{\Lambda}$ denote a wall covering the immersed wall $\Lambda$, intersecting $\wt{X}_{\wt{v}}$.
 The intersection $\wt{\Lambda} \cap \wt{X}_{\wt{v}}$ is a line with rational slope, stabilized by an infinite cyclic subgroup of $G_{\wt{v}}$. 
 Since the slope is rational there is a maximal infinite cyclic subgroup $H \leqslant G_{\wt{v}}$ that stabilizes a line perpendicular to $\wt{\Lambda} \cap \wt{X}_{\wt{v}}$. 
 The elements of $H$ are \emph{perpendicular to $\wt{\Lambda} \cap \wt{X}_{\wt{v}}$}.
  Note that there are finitely many $G_{\wt{v}}$-orbits of walls intersecting $\wt{X}_{\wt{v}}$.
 Given vertices $\wt{u}$, $\wt{v}$ in $\wt{\Gamma}$, the \emph{carrier of $\wt{u}, \wt{v}$}, denoted $\Carrier(\wt{u}, \wt{v})$, is the union of all vertex and edge spaces corresponding to the vertices and edges on the geodesic in the Bass-Serre tree $\wt{\Gamma}$ between and including $\wt{u}$ and $\wt{v}$. 

 The following is elementary:

 \begin{lem}\label{lemma:CyclicMatching}
  Let $T$ be a torus and $C \rightarrow T$ an immersed geodesic circle, with a choice of lift $\wt{C} \rightarrow \wt{T}$.
  Given $a, b \in \pi_1T - \stab(\wt{C})$,
  there exists $m_a, m_b \in \mathbb{Z} - \{0\}$ such that $a^{m_a} \wt{C} = b^{m_b} \wt{C}$. Moreover
  \[\dfrac{m_a}{m_b} = \dfrac{\#[C, b]}{\#[C, a]}.
   \]
 \end{lem}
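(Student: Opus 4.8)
The plan is to work additively, identifying $\pi_1 T$ with $\mathbb{Z}^2$ acting on $\wt{T} = \mathbb{R}^2$ by deck translations, so that each translate $g\wt{C}$ is a line parallel to $\wt{C}$ and $g\wt{C} = h\wt{C}$ if and only if $g - h \in \stab(\wt{C})$. First I would pin down the stabilizer: since $\wt{C}$ is a straight line in the direction of the primitive integer vector $v$ underlying $[C]$, a deck translation fixes this line setwise exactly when it moves along it, so $\stab(\wt{C}) = \langle v \rangle$. Thus the distinct translates of $\wt{C}$ are parametrized by the quotient $\mathbb{Z}^2 / \langle v \rangle$, which is infinite cyclic because $v$ is primitive.

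The key device is the homomorphism $\psi \colon \mathbb{Z}^2 \to \mathbb{Z}$ given by $\psi(g) = \det[v, g]$. Because $v$ is primitive, $\psi$ is surjective and its kernel is precisely $\langle v \rangle = \stab(\wt{C})$; hence $\psi$ realizes the quotient map above, and $g^{m}\wt{C} = h^{n}\wt{C}$ if and only if $m\,\psi(g) = n\,\psi(h)$. Since $a, b \notin \stab(\wt{C})$ we have $\psi(a), \psi(b) \neq 0$, so setting $m_a = \psi(b)$ and $m_b = \psi(a)$ yields nonzero integers with $m_a \psi(a) = m_b \psi(b)$, i.e. $a^{m_a}\wt{C} = b^{m_b}\wt{C}$. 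This proves existence.

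For the ratio, $m_a/m_b = \psi(b)/\psi(a)$ by construction, so it remains to identify $\psi$ with the intersection number. Writing $[C] = n v$ for the wrapping number $n \geq 1$, bilinearity of the determinant gives $\#[C,g] = \det\big[[C], g\big] = n\,\det[v,g] = n\,\psi(g)$; the common factor $n$ then cancels in the quotient, yielding $m_a/m_b = \#[C,b]/\#[C,a]$. The only delicate point is the bookkeeping of signs: the forced relation $m_a\psi(a) = m_b\psi(b)$ determines $\psi(b)/\psi(a)$ as a \emph{signed} ratio, so I would adopt the signed determinant convention for $\#$ recalled earlier (matching the identity $\#[\alpha,\beta] = \det[[\alpha],[\beta]]$), under which the formula holds exactly; otherwise it holds up to sign. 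I expect this sign bookkeeping, together with the observation that the non-primitivity factor $n$ cancels, to be the only subtlety, the remainder being the elementary linear algebra of the determinant pairing on $\mathbb{Z}^2$.
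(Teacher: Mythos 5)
The paper supplies no proof of this lemma at all---it is simply declared ``elementary''---so there is no argument of the author's to compare yours against; what matters is whether your proof is correct, and it is. Your route (identify $\pi_1 T$ with $\mathbb{Z}^2$ acting by translations on $\wt{T} = \mathbb{R}^2$, note that $\stab(\wt{C}) = \langle v \rangle$ for $v$ the primitive integer vector underlying $[C]$, and parametrize the parallel translates of $\wt{C}$ by the surjection $\psi(g) = \det[v,g]$, whose kernel is exactly $\langle v \rangle$) is surely the intended argument, and every step checks out: $a^{m_a}\wt{C} = b^{m_b}\wt{C}$ if and only if $m_a\psi(a) = m_b\psi(b)$, so $m_a = \psi(b)$, $m_b = \psi(a)$ gives existence, and any valid pair of exponents has the same ratio, so the ratio is well defined. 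Your two caveats are genuine subtleties rather than pedantry: the wrapping number $n$ in $[C] = nv$ really does have to cancel for the stated formula to hold, and the formula is literally correct only under the paper's signed convention $\#[\alpha,\beta] = \det\bigl[[\alpha],[\beta]\bigr]$; with the honest non-negative geometric count it holds only up to sign, as the example $v=(1,0)$, $a=(0,1)$, $b=(0,-1)$ shows, where every valid choice of exponents satisfies $m_a/m_b = -1$ while the unsigned ratio is $1$.
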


 \begin{lem} \label{lemma:WallOrbits}
 Let $\wt{\Lambda}$ be a wall intersecting the vertex spaces $\wt{X}_{\wt{v}_0}$ and $\wt{X}_{\wt{v}_n}$, where $\wt{v}_1, \ldots , \wt{v}_{n-1}$ is the sequence of vertices on the geodesic between $\wt{v}_0$ and $\wt{v}_n$ in $\wt{\Gamma}$.
 Let $\wt{e}_i$ be the edge between $\wt{v}_{i-1}$ and $\wt{v}_i$.
 Let $G_{\wt{e}_i} = \la g_i \ra$ for $1 \leq i \leq n$.
 Let $\stab(\wt{\Lambda})\cap G_{\wt{v}_i} = \la \rho_i \ra$.
 Let $g_0 \in G_{\wt{v}_0}$ and $g_{n+1} \in G_{\wt{v}_n}$ be perpendicular to $\wt{\Lambda} \cap \wt{X}_{\wt{v}_0}$ and $\wt{\Lambda} \cap \wt{X}_{\wt{v}_n}$ respectively.
 Then there exist $\ola{m}$, $\ora{m} \in \mathbb{Z} - \{0\}$ such that $g_0^{\ola{m}}  \wt{\Lambda} = g_{n+1}^{\ora{m}} \wt{\Lambda}$ and
 \[
  \frac{\ola{m}}{\ora{m}} = \frac{\ola{m}_0 \cdots \ola{m}_n}{\ora{m}_0 \cdots \ora{m}_n}
 \]
 \noindent where $\ola{m}_i = \#[\rho_i, g_{i+1}]$ and $\ora{m}_i = \#[\rho_i, g_i]$ for $0 \leq i \leq n$.
 Moreover the walls in $\{ g_0^{\ola{m}r}\wt{\Lambda} \}_{r\in \mathbb{Z}}$ do not intersect within $\Carrier(\wt{v}_0, \wt{v}_n)$.
 \end{lem}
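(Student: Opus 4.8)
The plan is to reduce the global statement to the single-vertex matching already recorded in Lemma \ref{lemma:CyclicMatching}, and then to splice the local matchings together along the carrier. First I would record the principle that a wall is determined by its intersection with any one vertex space: since each wall is connected, $2$-sided, and meets $\wt{X}_{\wt{v}}$ in a single line from which its continuation through the adjacent edge spaces is forced, two walls that share a line in some $\wt{X}_{\wt{v}}$ must coincide. With this in hand I would apply Lemma \ref{lemma:CyclicMatching} in each $\wt{X}_{\wt{v}_i}$ to the line $\wt{\Lambda}\cap\wt{X}_{\wt{v}_i}$ (which covers an immersed circle of direction $\rho_i$), taking the two elements to be the incoming and outgoing edge generators $g_i,g_{i+1}$ in the interior and the perpendicular generators $g_0,g_{n+1}$ at the two ends. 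Because $\wt{\Lambda}$ crosses each edge transversally and $g_0,g_{n+1}$ are perpendicular, none of these elements lies in $\langle\rho_i\rangle$, so every intersection number is nonzero; Lemma \ref{lemma:CyclicMatching} then yields equalities of lines which, by the uniqueness principle, upgrade to the wall equalities $g_i^{\ola{m}_i}\wt{\Lambda}=g_{i+1}^{\ora{m}_i}\wt{\Lambda}$ for $0\le i\le n$ (with $g_0,g_{n+1}$ in place of the missing edge generators at the ends).

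The core of the argument is to chain these $n+1$ local relations into $g_0^{\ola{m}}\wt{\Lambda}=g_{n+1}^{\ora{m}}\wt{\Lambda}$. The naive idea of substituting one relation into the next fails, since it forces one to raise an already-composed relation to a power, and the generators appearing at far-apart vertices do not commute in the nonabelian group $G$. The remedy I would use is to rescale each local relation \emph{before} chaining: within the abelian group $G_{\wt{v}_i}=\mathbb{Z}^2$ the elements $g_i$ and $g_{i+1}$ commute, so the $i$-th relation may be raised to any power $c_i$, giving $g_i^{c_i\ola{m}_i}\wt{\Lambda}=g_{i+1}^{c_i\ora{m}_i}\wt{\Lambda}$. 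Choosing $c_i=\big(\prod_{j<i}\ora{m}_j\big)\big(\prod_{j>i}\ola{m}_j\big)$ makes the exponent of the shared generator $g_{i+1}$ agree across each junction, namely $c_i\ora{m}_i=c_{i+1}\ola{m}_{i+1}$, so the rescaled relations chain by pure transitivity with no further exponent manipulation. The resulting telescope reads $g_0^{c_0\ola{m}_0}\wt{\Lambda}=g_1^{c_0\ora{m}_0}\wt{\Lambda}=\cdots=g_{n+1}^{c_n\ora{m}_n}\wt{\Lambda}$, and since $c_0\ola{m}_0=\ola{m}_0\cdots\ola{m}_n$ and $c_n\ora{m}_n=\ora{m}_0\cdots\ora{m}_n$, this is exactly $g_0^{\ola{m}}\wt{\Lambda}=g_{n+1}^{\ora{m}}\wt{\Lambda}$ with the stated ratio; all factors being nonzero, $\ola{m},\ora{m}\neq 0$. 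I expect this exponent bookkeeping to be the main obstacle, precisely because it is where the noncommutativity of $G$ must be sidestepped.

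For the final assertion I would show that no two members of $\{g_0^{\ola{m}r}\wt{\Lambda}\}_{r\in\mathbb{Z}}$ meet inside $\Carrier(\wt{v}_0,\wt{v}_n)$. Suppose $g_0^{\ola{m}r}\wt{\Lambda}$ and $g_0^{\ola{m}r'}\wt{\Lambda}$ ($r\neq r'$) met at a point $x$. If $x$ lies in a vertex space $\wt{X}_{\wt{v}_j}$ of the carrier, then both walls meet $\wt{X}_{\wt{v}_j}$ in lines covering the same immersed circle, hence in parallel lines; two parallel lines through $x$ coincide, so by the uniqueness principle the two walls are equal and $g_0^{\ola{m}(r-r')}\in\stab(\wt{\Lambda})$ — impossible, since a nonzero power of the perpendicular element $g_0$ carries $\wt{\Lambda}\cap\wt{X}_{\wt{v}_0}$ to a distinct parallel line and so cannot stabilize $\wt{\Lambda}$. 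In particular, in the language of Remark \ref{intersections}, the two walls have no regular intersections in the carrier, so by Lemma \ref{lemma:paralem1} they stay parallel through it. If instead $x$ lay in an edge space, the two walls would cross it as arcs with disjoint endpoints on both boundary circles and the same transverse order (inherited from the disjoint parallel lines in the adjacent vertex spaces); taking the arcs to cross monotonically — harmless by Remark \ref{choices} — these arcs are disjoint, a contradiction. Hence the family is pairwise non-intersecting in the carrier.
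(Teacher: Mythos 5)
Your first two paragraphs are correct and essentially reproduce the paper's own argument: the paper likewise applies Lemma \ref{lemma:CyclicMatching} at each vertex group and chains the local relations by raising the $i$-th one to a compensating power, which is legitimate precisely because $g_i$ and $g_{i+1}$ both lie in the abelian group $G_{\wt{v}_i}$. Your constants $c_i$ produce exactly the paper's telescoped identity $g_0^{\ola{m}}\wt{\Lambda} = g_{j}^{e_j}\wt{\Lambda}$, where $e_j = \ora{m}_0\cdots\ora{m}_{j-1}\ola{m}_{j}\cdots\ola{m}_n$ for $1 \leq j \leq n+1$; your explicit ``uniqueness principle'' and your verification of the hypothesis of Lemma \ref{lemma:CyclicMatching} are points the paper leaves implicit, and both are fine.

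The ``moreover'' argument, however, has a genuine gap. You assert that if two walls of the family meet a carrier vertex space $\wt{X}_{\wt{v}_j}$, then they meet it ``in lines covering the same immersed circle, hence in parallel lines.'' That is unjustified and false in general: two $G$-translates of $\wt{\Lambda}$ meeting the same vertex space can meet it in lines covering \emph{different, non-parallel} circles of $\Lambda$. This is exactly the crossing phenomenon of Example \ref{example1}, where the walls through the parallel $ab^2$-lines of $\wt{X}_{\wt{v}_1}$ are powers-of-$h$ translates of a single wall (with $h$ transverse, and one may take $h$ perpendicular), and yet they cross regularly in the adjacent vertex space $\wt{X}_{\wt{v}_2}$, inside $\Carrier(\wt{v}_1,\wt{v}_2)$. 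Note that your argument never uses the defining property of $\ola{m}$ as the full product $\ola{m}_0\cdots\ola{m}_n$, only that it is nonzero; so it would equally ``prove'' that $\{g_0^{s}\wt{\Lambda}\}_{s\in\mathbb{Z}}$ is pairwise disjoint in the carrier for \emph{every} exponent $s$, which is false and would make the dilation mechanism of Proposition \ref{prop:dilatedInfinite} impossible. The missing ingredient is the telescoped identity itself, re-run with an extra factor $r$ in every exponent to give $g_0^{\ola{m}r}\wt{\Lambda} = g_j^{e_jr}\wt{\Lambda}$ for all $r$ and all $j$ (this does \emph{not} follow formally from the case $r=1$, since $g_0$ and $g_j$ need not commute; one must redo the chaining with $rc_i$ in place of $c_i$). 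Since $g_j$ generates the edge group $G_{\wt{e}_j}$, it preserves $\wt{X}_{\wt{v}_{j-1}}$, $\wt{X}_{\wt{e}_j}$ and $\wt{X}_{\wt{v}_j}$; hence the walls of the family meet $\wt{X}_{\wt{v}_j}$ in the lines $g_j^{e_jr}\bigl(\wt{\Lambda}\cap\wt{X}_{\wt{v}_j}\bigr)$, $r\in\mathbb{Z}$, which really are parallel translates of one line and are pairwise disjoint because $\#[\rho_j,g_j]\neq 0$, and they meet $\wt{X}_{\wt{e}_j}$ in $g_j$-translates of a single arc. This identity is where the careful choice of $\ola{m}$ enters, and only once it is in place does reasoning like your parallelism-plus-uniqueness step (and your monotone-arc remark in the edge spaces) go through.
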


 \begin{proof}
  For each $0 \leq i \leq n$ apply Lemma \ref{lemma:CyclicMatching} to $g_i, g_{i+1} \in G_{\wt{v}_i}$ to deduce that $ g_i^{\ola{m}_i} \wt{\Lambda} = g_{i+1}^{\ora{m}_i} \wt{\Lambda}$.
  Let $\ola{m} = \ola{m}_0 \cdots \ola{m}_{n}$ and $\ora{m} = \ora{m}_0\cdots \ora{m}_{n}$.
  Observe that for $ 0 \leq i\leq n$ we have:
  \[
   g_0^{\ola{m}} \wt{\Lambda} = g_0^{\ola{m}_0\ola{m}_1 \cdots \ola{m}_{n}} \wt{\Lambda} = g_i^{\ora{m}_0 \cdots \ora{m}_i \ola{m}_{i+1} \cdots \ola{m}_{n}} \wt{\Lambda},
  \]
  \noindent so there are no intersections of walls in $\{ g_0^{\ola{m}r} \wt{\Lambda} \}_{r\in \mathbb{Z}}$ within either $\wt{X}_{\wt{v}_i}$ or $\wt{X}_{\wt{e}_i}$. 
 \end{proof}

  \begin{defn}
  Let $\wt{\Lambda}$ be a wall covering $\Lambda$ and let $\wt{X}_{\wt{v}}$ intersect $\wt{\Lambda}$.
  Observe that $\wt{\Lambda}$ also intersects $\wt{X}_{g\wt{v}}$ for any $g \in \stab(\wt{\Lambda})$.
  If $h\in G_{\wt{v}}$ is perpendicular to $\wt{\Lambda} \cap \wt{X}_{\wt{v}}$, then $ghg^{-1}$ is a perpendicular to $\wt{\Lambda} \cap \wt{X}_{g\wt{v}}$.
  From Lemma \ref{lemma:WallOrbits} there exist $\ola{m}, \ora{m} \in \mathbb{Z} - \{0\}$ such that $h^{\ola{m}} \wt{\Lambda} = gh^{\ora{m}}g^{-1} \wt{\Lambda}$ and $\{ h^{\ola{m}r} \wt{\Lambda} \}_{r\in \mathbb{Z}}$ contains no pair intersecting in $\Carrier(\wt{v},g\wt{v})$.
  The pair $\ola{m}, \ora{m}$ are \emph{$g$-shift exponents}.
  Note that $g$-shift exponents are not unique.
  If $\ola{m}, \ora{m}$ are $g$-shift exponents, then so are $\ola{m}n, \ora{m}n$ for $n \in \mathbb{Z} - \{0\}$.
  The \emph{dilation function of $\wt{\Lambda}$} is the map $R: \stab(\wt{\Lambda}) \rightarrow \mathbb{Q}^*$ with $g \mapsto \dfrac{\ola{m}}{\ora{m}}$, where $\ola{m}, \ora{m}$ are $g$-shift exponents.
  \end{defn}

  \begin{lem} \label{lemma:homomorphismR}
   The map $R : \stab(\wt{\Lambda}) \rightarrow \mathbb{Q}^*$ is a homomorphism, and does not depend on the choice of vertex space $\wt{X}_{\wt{v}}$ or perpendicular element $h \in G_{\wt{v}}$.
   Moreover, if $R'$ is the dilation function of $\wt{\Lambda}'$, where $\wt{\Lambda}' = g' \wt{\Lambda}$, then $R(g'^{-1}gg') = R'(g)$ for $g \in \stab(\wt{\Lambda}')$. 
  \end{lem}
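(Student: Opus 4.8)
The plan is to repackage the dilation function as a multiplicative ``transport coefficient'' along $\wt\Lambda$ and to extract all three assertions from the naturality of this coefficient. The point is that Lemma~\ref{lemma:WallOrbits} does more than produce shift exponents: for vertices $\wt u,\wt w$ both meeting $\wt\Lambda$, with perpendicular elements $h_u\in G_{\wt u}$ and $h_w\in G_{\wt w}$, it computes the ratio of the exponents $a,b$ with $h_u^{a}\wt\Lambda=h_w^{b}\wt\Lambda$ as the explicit product
\[
 T(\wt u,h_u;\wt w,h_w)\;=\;\prod_{i=0}^{n}\frac{\#[\rho_i,g_{i+1}]}{\#[\rho_i,g_i]},
\]
taken over the geodesic $\wt u=\wt v_0,\dots,\wt v_n=\wt w$ in $\wt\Gamma$, with $g_0=h_u$, $g_{n+1}=h_w$, the $g_i$ the edge generators, and $\la\rho_i\ra=\stab(\wt\Lambda)\cap G_{\wt v_i}$. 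Since the geodesic is unique, this formula assigns a definite value in $\mathbb{Q}^*$ to each choice of endpoint data, and by construction $R(g)=T(\wt v,h;g\wt v,ghg^{-1})$, because $ghg^{-1}$ is perpendicular to $\wt\Lambda\cap\wt X_{g\wt v}$.

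Next I would record three formal properties of $T$. First, reversing the geodesic swaps incoming and outgoing edges at every vertex and exchanges the endpoint elements, so $T(\wt w,h_w;\wt u,h_u)=T(\wt u,h_u;\wt w,h_w)^{-1}$. Second, $T$ depends only on its endpoint data and is a multiplicative cocycle: for any third vertex $\wt z$ meeting $\wt\Lambda$ with perpendicular $h_z$, and any perpendicular $h_w$ at an intermediate vertex $\wt w$,
\[
 T(\wt u,h_u;\wt z,h_z)=T(\wt u,h_u;\wt w,h_w)\cdot T(\wt w,h_w;\wt z,h_z),
\]
since the factors contributed by $h_w$ at $\wt w$ occur once in a numerator and once in a denominator, and any edge traversed twice contributes reciprocal factors; both cancel. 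Third, $T$ is $G$-equivariant: for $k\in G$ the isometry $k$ carries the geodesic, the edge generators, and the groups $\la\rho_i\ra$ to the corresponding data for $k\wt\Lambda$, and geometric intersection numbers are preserved, so $T(k\wt u,kh_uk^{-1};k\wt w,kh_wk^{-1})$ computed with respect to $k\wt\Lambda$ equals $T(\wt u,h_u;\wt w,h_w)$.

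With these in hand the three claims follow formally. For independence of $(\wt v,h)$, given alternative data $(\wt v',h')$ I would expand $T(\wt v',h';g\wt v',gh'g^{-1})$ by the cocycle through the intermediate vertices $\wt v$ and $g\wt v$; equivariance applied to $k=g$ (using the decisive fact $g\wt\Lambda=\wt\Lambda$) identifies the tail factor $T(g\wt v,ghg^{-1};g\wt v',gh'g^{-1})$ with $T(\wt v,h;\wt v',h')$, which cancels the head factor by the inversion property, leaving $R(g)$ unchanged. For the homomorphism property I would insert the intermediate vertex $g_2\wt v$ with perpendicular $g_2hg_2^{-1}$ and read off $R(g_1g_2)=T(\wt v,h;g_2\wt v,g_2hg_2^{-1})\cdot T(g_2\wt v,g_2hg_2^{-1};g_1g_2\wt v,g_1(g_2hg_2^{-1})g_1^{-1})=R(g_2)\,R(g_1)$, the second factor being $R(g_1)$ computed at the base data $(g_2\wt v,g_2hg_2^{-1})$ and hence equal to $R(g_1)$ by the independence just established; commutativity of $\mathbb{Q}^*$ finishes it. Finally, for the conjugacy relation, writing $\hat g=g'^{-1}gg'\in\stab(\wt\Lambda)$, $\hat v=g'^{-1}\wt v'$, and $\hat h=g'^{-1}h'g'$, I would apply equivariance with $k=g'^{-1}$ to rewrite $R'(g)=T'(\wt v',h';g\wt v',gh'g^{-1})$ as $T(\hat v,\hat h;\hat g\hat v,\hat g\hat h\hat g^{-1})=R(\hat g)$.

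The genuine work is confined to the second paragraph: verifying that the product formula of Lemma~\ref{lemma:WallOrbits} truly telescopes (so that $T$ depends only on its endpoint data and obeys the cocycle identity) and that the perpendicular elements transform correctly under the $G$-action. The main obstacle is the equivariance step, where one must use that $g$ stabilizes $\wt\Lambda$ so that the conjugated data $(g\wt v,ghg^{-1})$ again describes walls in the \emph{same} family $\{h^{r}\wt\Lambda\}$; without $g\wt\Lambda=\wt\Lambda$ the intersection-number factors would refer to a different wall and the cancellations underlying base-point independence would fail. Once equivariance and the cocycle are secured, the homomorphism and conjugacy statements are purely formal.
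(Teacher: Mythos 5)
Your proposal is correct, but it routes the argument differently than the paper does. The paper never introduces a transport coefficient: it works directly with the shift-exponent relations $h^{\ola{m}}\wt{\Lambda} = gh^{\ora{m}}g^{-1}\wt{\Lambda}$, raising them to powers and composing them. Its homomorphism step composes a $g_1$-relation based at $\wt{X}_{\wt{v}}$ with a $g_2$-relation based at $\wt{X}_{g_1\wt{v}}$ --- exactly your insertion of an intermediate vertex, but phrased as an equation rather than as a cocycle identity; its vertex-independence step invokes Lemma~\ref{lemma:WallOrbits} only to produce a relation $h_{\wt{w}}^{p}\wt{\Lambda} = h_{\wt{u}}^{q}\wt{\Lambda}$ and then manipulates; its conjugation step just translates the defining relation by $g'$. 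Your version buys a cleaner formal structure: once inversion, the cocycle identity, and $G$-equivariance of $T$ are in place, all three claims are symbol-pushing in the abelian group $\mathbb{Q}^*$. The cost is that the cocycle identity must be verified for triples $\wt{u},\wt{w},\wt{z}$ in which $\wt{w}$ is \emph{not} on the geodesic from $\wt{u}$ to $\wt{z}$ --- this is what your homomorphism and base-change applications actually use, since $g_2\wt{v}$ need not lie between $\wt{v}$ and $g_1g_2\wt{v}$. Your backtracking-cancellation sketch does close this: at the turning vertex the two $h_w$-factors cancel, leaving $\#[\rho,g_{\mathrm{in}}]/\#[\rho,g_{\mathrm{in}}]=1$, and the factors along the doubled segment pair off reciprocally, while the factors at the median vertex combine into the correct factor for the geodesic $[\wt{u},\wt{z}]$. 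But this verification is work the paper's more elementary route never has to do.

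Two points need patching. First, the identification $R(g)=T(\wt{v},h;g\wt{v},ghg^{-1})$ silently assumes that \emph{all} choices of $g$-shift exponents give the same ratio: $R$ is defined via an arbitrary choice, and Lemma~\ref{lemma:WallOrbits} only produces one particular choice whose ratio is the product formula. This well-definedness is the very first thing the paper proves. In your framework it follows in one line --- since $h$ is perpendicular to $\wt{\Lambda}\cap\wt{X}_{\wt{v}}$, no nonzero power of $h$ stabilizes $\wt{\Lambda}$, so the exponent ratio is unique --- but you must say it, or your $T$ computes only one admissible value of $R(g)$ rather than the function $R$ itself. Second, with the paper's definition of geometric intersection numbers as counts of points, the literal product defining $T$ is positive, so the formula alone pins down only $|R|$; since the paper explicitly allows $R(g)=-1$, you should either define $T$ as the (unique) exponent ratio and treat the product formula as computing its absolute value, or fix signed conventions throughout. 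The paper's relation-based manipulations carry signs automatically, which is a quiet robustness advantage of its route over one that passes everything through the intersection-number formula.
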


  \begin{proof}
   Fix a choice of $\wt{X}_{\wt{v}}$ and let $h \in G_{\wt{v}}$ be a primitive element perpendicular to $\wt{\Lambda} \cap \wt{X}_{\wt{v}}$.
   To see that $R$ is well defined take any two choices of $g$-shift exponents $\ola{m}, \ora{m}$, and $\ola{m}', \ora{m}'$.
   Since $ h^{\ola{m}}  \wt{\Lambda} = g h^{\ora{m}} g^{-1} \wt{\Lambda}$ and $ h^{\ola{m}'} \wt{\Lambda} = g h^{\ora{m}'} g^{-1} \wt{\Lambda} \;$ we have $g h^{\ola{m}\ora{m}'}g^{-1} \wt{\Lambda} = h^{\ola{m}\ola{m}'} \wt{\Lambda} =  gh^{\ora{m}\ola{m}'} g^{-1} \wt{\Lambda}$ which implies that $\dfrac{\ola{m}}{\ora{m}} = \dfrac{\ola{m}'}{\ora{m}'}$.
   Now suppose that $h' \in G_{\wt{v}}$ is any other choice element perpendicular to $\wt{\Lambda} \cap \wt{X}_{\wt{v}}$, then $h' = h^n$ with $n\in\mathbb{Z} -\{0\}$.
   Therefore, $(h')^{\ola{m}} \wt{\Lambda} = h^{n\ola{m}}  \wt{\Lambda} = g h^{n\ora{m}} g^{-1} \wt{\Lambda} = g(h')^{\ora{m}}g^{-1} \wt{\Lambda}$, and $\frac{\ola{m}}{\ora{m}} = \frac{n\ola{m}}{n\ora{m}}$, so $\ola{m}, \ora{m}$ are also $g$-shift exponents with respect to $h'$. 

   To see that $R$ is independent of the choice of $\wt{X}_{\wt{v}}$ suppose $\wt{\Lambda}$ also intersects $\wt{X}_{\wt{w}}$.
   Let $h_{\wt{u}} \in G_{\wt{u}}$ be perpendicular to $\wt{\Lambda} \cap \wt{X}_{\wt{u}}$, and $h_{\wt{w}} \in G_{\wt{w}}$ be perpendicular to $\wt{\Lambda} \cap \wt{X}_{\wt{w}}$.
   Then by Lemma \ref{lemma:WallOrbits} there exist $p, q \in \mathbb{Z} - \{0\}$ such that $ h_{\wt{w}}^p  \wt{\Lambda} = h_{\wt{u}}^q \wt{\Lambda}$.
   Let $g \in \stab(\wt{\Lambda})$ and $\ola{m}, \ora{m}$ be $g$-shift exponents with respect to $\wt{X}_{\wt{u}}$ and deduce that $h_{\wt{w}}^{\ola{m}p} \wt{\Lambda} = h_{\wt{u}}^{\ola{m}q} \wt{\Lambda} = g h_{\wt{u}}^{\ora{m}q} g^{-1} \wt{\Lambda} =  g h_{\wt{w}}^{\ora{m}p} g^{-1} \wt{\Lambda}$.
   This means that $\ola{m}p, \ora{m}p$ are $g$-shift exponents with respect to $\wt{X}_{\wt{w}}$

   Let $\wt{\Lambda}' = g'\wt{\Lambda}$ with $g'\in G$.
   Let $g \in \stab( g'\wt{\Lambda} )$ then we know that $g'^{-1}gg' \in \stab(\wt{\Lambda})$ so there exist $g'^{-1}gg'$-shift exponents $\ola{m}, \ora{m}$ for $\wt{\Lambda}$ with respect to $\wt{X}_{\wt{v}}$ so that
   $h^{\ola{m}} \wt{\Lambda} = (g'^{-1}gg') h^{\ora{m}} (g'^{-1}gg')^{-1} \wt{\Lambda}$, where $h$ is perpendicular to $\wt{\Lambda}$ in $\wt{X}_{\wt{v}}$.
    This can be rewritten as $(g' h g'^{-1})^{\ola{m}} (g' \wt{\Lambda}) = g (g' h g'^{-1})^{\ora{m}} g^{-1} (g'\wt{\Lambda})$, where $g' h g'^{-1}$ is perpendicular to $\wt{\Lambda}'$ in $\wt{X}_{g'\wt{v}}$.
    This means that $\ola{m}, \ora{m}$ are $g$-shift exponents for $g'\wt{\Lambda}$ with respect to $\wt{X}_{g'\wt{v}}$, and $R'(g) = R(g'^{-1}gg')$.

    To see that $R$ is a homomorphism, let $g_1, g_2 \in \stab(\wt{\Lambda})$, let $\ola{m}_1, \ora{m}_1$ be $g_1$-shift exponents of $\wt{\Lambda}$ with respect to $\wt{X}_{\wt{v}}$ so
  $ h^{\ola{m}_1} \wt{\Lambda} = g_1 h^{\ora{m}_1} g_1^{-1} \wt{\Lambda}$, and let $\ola{m}_2, \ora{m}_2$ be $g_2$-shift exponents of $\wt{\Lambda}$ with respect to $\wt{X}_{g_1\wt{v}}$ so
  $ g_1h^{\ola{m}_2} g_1^{-1} \wt{\Lambda} =  g_2 g_1 h^{\ora{m}_2} g_1^{-1} g_2^{-1} \wt{\Lambda}$.
  Then $ h^{\ola{m}_1 \ola{m}_2} \wt{\Lambda} =  g_2 g_1 h^{\ora{m}_1\ora{m}_2} g_1^{-1} g_2^{-1}  \wt{\Lambda}$.
  Therefore $R(g_1g_2) = \dfrac{\ola{m}_1 \ola{m_2}}{\ora{m}_1 \ora{m_2}} = R(g_1)R(g_2)$.
  \end{proof}

  \begin{defn} \label{defn:dilated}
   If the dilation function $R$ has infinite image then $\widetilde{\Lambda}$ is \emph{dilated}.
   Otherwise $\widetilde{\Lambda}$ is \emph{non-dilated} (and the image of $R$ is either trivial or $\{-1,1\}$).
   An element $g \in \stab{\wt{\Lambda}}$ is \emph{dilated} if $R(g) \neq \pm1$.
   Lemma~\ref{lemma:homomorphismR} implies that if $\wt{\Lambda}$ is [non-]dilated, then all its $G$-translates are [non-]dilated.
   Therefore, it makes sense to say that an immersed wall $\Lambda$ is [non-]dilated if its associated walls are [non-]dilated walls.
  \end{defn}

  A wall $\wt{\Lambda}$ being dilated means there is an infinite family of walls, $\{ h^{r} \wt{\Lambda} \}_{r \in \mathbb{Z}}$, intersecting $\wt{X}_{\wt{v}}$, that become closer together while traveling from $\wt{X}_{\wt{v}}$ to $\wt{X}_{g\wt{v}}$.
  Considering higher powers of $g$, some infinite subset of these walls must intersect each other.
  The proof of the following proposition makes this idea precise.

 \begin{prop} \label{prop:dilatedInfinite}
  If $(\wt{X}, \mathcal{W})$ contains a dilated wall then there are crossing sets of arbitrary finite cardinality.
 \end{prop}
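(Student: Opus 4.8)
The plan is to produce, for every $N$, a crossing set consisting of $N$ translates of the dilated wall $\wt{\Lambda}$. Since $\wt{\Lambda}$ is dilated, Definition~\ref{defn:dilated} and the fact that $R$ is a homomorphism (Lemma~\ref{lemma:homomorphismR}) let me choose a convenient dilated element: starting from any $g_0 \in \stab(\wt{\Lambda})$ with $R(g_0)\neq\pm1$, I replace $g_0$ by $g_0^{-1}$ to arrange $|R|>1$ and then by $g_0^{2}$ if needed to arrange $R(g)>1$; the resulting shift exponents then have equal sign and may be taken positive, giving $g$-shift exponents $\ola{m}>\ora{m}>0$ relative to a fixed vertex space $\wt{X}_{\wt{v}}$ meeting $\wt{\Lambda}$ and a primitive perpendicular $h\in G_{\wt{v}}$, so that $h^{\ola{m}}\wt{\Lambda}=(ghg^{-1})^{\ora{m}}\wt{\Lambda}$. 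Two symmetries organize the argument: the family $\{h^{r}\wt{\Lambda}\}_{r\in\mathbb{Z}}$ of parallel walls through $\wt{X}_{\wt{v}}$ is permuted by $h$, so whether $h^{r_1}\wt{\Lambda}$ and $h^{r_2}\wt{\Lambda}$ cross depends only on $r_1-r_2$; and $g$ carries $h^{r}\wt{\Lambda}$ to $(ghg^{-1})^{r}\wt{\Lambda}$, identifying the configuration at $\wt{X}_{\wt{v}}$ with the contracted configuration at $\wt{X}_{g\wt{v}}$, with $g^{t}$-shift exponents $\ola{m}^{t},\ora{m}^{t}$.

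The engine of the proof is that the perpendicular spacing of $\{h^{r}\wt{\Lambda}\}$ contracts by the factor $\ora{m}/\ola{m}<1$ on passing from $\wt{X}_{\wt{v}}$ to $\wt{X}_{g\wt{v}}$. Lemma~\ref{lemma:WallOrbits} tells me that the subfamily $\{h^{\ola{m}r}\wt{\Lambda}\}$ is non-crossing throughout $\Carrier(\wt{v},g\wt{v})$ and lands on the walls $\{(ghg^{-1})^{\ora{m}r}\wt{\Lambda}\}$ in $\wt{X}_{g\wt{v}}$. I would then argue that the remaining walls $h^{r}\wt{\Lambda}$ with $\ola{m}\nmid r$ cannot all survive to $\wt{X}_{g\wt{v}}$ as walls parallel to $\wt{\Lambda}$: if they did, Lemma~\ref{lemma:paralem1} applied on $\wt{Y}=\Carrier(\wt{v},g\wt{v})$ would keep them parallel and order-preserving, yet the contraction $\ola{m}\mapsto\ora{m}$ leaves no room for all of them between consecutive survivors. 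Hence some translate of $\wt{\Lambda}$ must acquire a regular intersection with the subfamily inside the carrier, producing a first crossing, and by the $h$-equivariance above this crossing persists under translation by $h$.

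To pass from one crossing to crossing sets of unbounded size I would exploit the self-similarity furnished by $g$, exactly as in the prototype Example~\ref{example1}. Applying $g$ identifies the configuration at $\wt{X}_{\wt{v}}$ with its contracted copy at $\wt{X}_{g\wt{v}}$, so the crossing found in $\Carrier(\wt{v},g\wt{v})$ reappears, after re-indexing, in $\Carrier(g\wt{v},g^{2}\wt{v})$, then in $\Carrier(g^{2}\wt{v},g^{3}\wt{v})$, and so on. I would package this as the $g$-orbit of a suitable wall $\Omega$ transverse to the surviving subfamily: since $g^{i}\Omega$ and $g^{j}\Omega$ cross if and only if $\Omega$ crosses $g^{|i-j|}\Omega$, it suffices to show $\Omega$ crosses $g^{t}\Omega$ for every $t\geq1$, whereupon $\{\Omega,g\Omega,\dots,g^{N-1}\Omega\}$ is a crossing set of size $N$. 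This is precisely the mechanism by which the walls $1,2,4,\dots,2^{n}$ of Example~\ref{example1} become pairwise crossing, and it delivers crossing sets of arbitrary finite cardinality (which Proposition~\ref{WeakInfDimInfCube} then upgrades to the infinite cube).

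The hard part will be the bootstrap: forcing the crossings living at the different scales $\ola{m}^{t}/\ora{m}^{t}$ to be realized simultaneously by one finite set of walls, rather than merely existing in separate carriers. Making the re-indexings across $\wt{X}_{\wt{v}},\wt{X}_{g\wt{v}},\wt{X}_{g^{2}\wt{v}},\dots$ line up so that the chosen walls genuinely pairwise cross is the delicate bookkeeping. A secondary subtlety is that the crossing-production step cannot be settled by counting perpendicular positions alone, since a priori nothing confines the non-surviving walls to a lattice; one must instead track how the immersed-wall combinatorics distributes the family among the finitely many parallel classes in $\wt{X}_{g\wt{v}}$ (the analogue of the even/odd, $ab^{2}/a$ dichotomy of Example~\ref{example1}), and it is here, especially when $\ora{m}>1$, that the argument requires the most care.
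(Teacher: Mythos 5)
Your proposal runs on the same engine as the paper's proof---contraction of the perpendicular family $\{h^{r}\wt{\Lambda}\}$ under a dilating $g$, plus $g$-self-similarity and $G$-equivariance of crossing---but both load-bearing steps are left open, and the one concrete argument you do sketch fails as stated. The ``no room between consecutive survivors'' count inside a single $\Carrier(\wt{v},g\wt{v})$ is not valid: parallelism imposes no lattice structure, so infinitely many pairwise distinct walls can meet $\wt{X}_{g\wt{v}}$ in lines lying strictly between two given parallel lines (distinct walls can even share a line). You flag this yourself, but the fix is precisely the missing content: one needs the paper's Lemma~\ref{lemma:partition}, which shows that the translates of $\wt{\Lambda}_0$ meeting $\wt{X}_{\wt{v}}$ and not regularly crossing $\wt{\Lambda}_0$ in the carrier fall into \emph{finitely many} orbits $\{h^{\ola{n}r}\wt{\Lambda}_i\}_{r\in\mathbb{Z}}$, $0\le i\le p$, with a \emph{single} pair of shift exponents $\ola{n},\ora{n}$ valid simultaneously for all of them (this uniformity is itself a point requiring proof, via Lemma~\ref{lemma:WallOrbits} and cancellation of the intersection numbers $j_i$). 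Without that quantization no pigeonhole can run; and even with it, a single application of $g$ need not force any crossing, so ``producing a first crossing'' inside $\Carrier(\wt{v},g\wt{v})$ is simply not available.

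The second gap is the bootstrap you defer as ``delicate bookkeeping,'' and it is where the paper's key construction lives. Fix $s$ and set $\wt{M}=h^{\ola{n}^{s}}\wt{\Lambda}_0$, then follow the backward orbit $\wt{M},g^{-1}\wt{M},g^{-2}\wt{M},\dots$. The exponent $\ola{n}^{s}$ is chosen so that $g^{-i}\wt{M}=h^{\ora{n}^{i}\ola{n}^{s-i}}\wt{\Lambda}_0$ stays in the partition class of $\wt{\Lambda}_0$ for $0\le i\le s-1$; hence the first $k$ with $g^{-k}\wt{M}$ regularly crossing $\wt{\Lambda}_0$ in the carrier satisfies $k\ge s$. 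Minimality of $k$ together with Lemma~\ref{lemma:paralem1} keeps every earlier $g^{-i}\wt{M}$ parallel to $\wt{\Lambda}_0$ at the vertex space where the crossing occurs, so $g^{-k}\wt{M}$ crosses all of them, and $g$-equivariance upgrades this to pairwise crossing of $\{\wt{M},\dots,g^{-k}\wt{M}\}$, a set of size at least $s+1$. The case ``no such $k$ exists'' is excluded not by counting room but by tracking the index $r_i$ of $g^{-i}\wt{M}$ in the partition: it obeys the contracting affine recursion $r_{i+1}=(\ora{n}/\ola{n})r_i+a_i$ with $a_i$ bounded, hence is bounded, contradicting that the $g^{-i}\wt{M}$ are pairwise distinct (which follows from $R(g)>1$). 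Note also that your target---one wall $\Omega$ crossing $g^{t}\Omega$ for \emph{all} $t\ge1$---is stronger than necessary and you give no way to produce it; the construction above only yields crossings for $1\le t\le k$, and since $k\ge s$ is unbounded as $s\to\infty$, that already suffices for Proposition~\ref{WeakInfDimInfCube}.
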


 \begin{proof}
 Let $\wt{\Lambda}_0$ be a dilated wall intersecting $\wt{X}_{\wt{v}}$ and let $h \in G_{\wt{v}}$ be perpendicular to $\wt{\Lambda}_0 \cap \wt{X}_{\wt{v}}$.
 Choose $g \in \stab(\wt{\Lambda}_0)$ such that $R(g) > 1$.
 By Lemma \ref{lemma:partition} the set of all $G$-translates of $\wt{\Lambda}_0$ intersecting $\wt{X}_{\wt{v}}$ that do not regularly intersect $\wt{\Lambda}_0$ in $\Carrier(\wt{v}, g\wt{v})$ can be partitioned:

  \[
   \mathcal{P} = \Big\{ \{ h^{\ola{n}r} \wt{\Lambda}_0 \}_{r \in \mathbb{Z}},  \ldots, \{ h^{\ola{n}r} \wt{\Lambda}_p \}_{r\in \mathbb{Z}}  \Big\},
  \]

  \noindent where $h^{\ola{n}} \wt{\Lambda}_i = g h^{\ora{n}}g^{-1}\wt{\Lambda}_i$  and $\dfrac{\ola{n}}{ \ora{n}} = R(g)$.

  Let $s \geq 1$ and $\wt{M} = h^{\ola{n}^s}\wt{\Lambda}_0$.
  We will show that $Q^{\infty}_s = \{ \wt{M}, g^{-1}\wt{M}, g^{-2}\wt{M}, \ldots, \}$ contains a cardinality $s$ crossing set.
  All $g^{-i}\wt{M}$ are distinct because if $g^{-p}\wt{M} = g^{-q}\wt{M}$ for $p\neq q$ then
  $
   h^{\ola{n}^s} \wt{\Lambda}_0 = g^{p-q} h^{\ola{n^s}} \wt{\Lambda}_0 = g^{p-q} h^{\ola{n^s}} g^{q-p} \wt{\Lambda}_0
  $
  which would imply $R(g^{p-q}) = 1$, but $R$ is a homomorphism so this would contradict $R(g) >1$.
  This implies that $Q^{\infty}_s = g(Q^{\infty}_s - \{\wt{M}\})$.
  Note that none of the walls $\wt{M}, \ldots, g^{1-s} \wt{M}$ regularly intersect $\wt{\Lambda}_0$ in $\Carrier(\wt{v},g\wt{v})$, since
  \[
   g^{-i}\wt{M} = g^{-i} h^{\ola{n}^s} \wt{\Lambda}_0  = g^{-i} h^{\ola{n}^i\ola{n}^{s-i}} g^i \wt{\Lambda}_0 = h^{\ora{n}^i \ola{n}^{s-i}}\wt{\Lambda}_0,
  \]
  \noindent and the right hand side belongs to the set $\{h^{\ola{n}r} \wt{\Lambda}_0\}_{r\in\mathbb{Z}} \in \mathcal{P}$ for $0 \leq i \leq s-1$.

  Suppose there exists $k$ such that $g^{-k}\wt{M}$ regularly intersects $\wt{\Lambda}_0$ in $\Carrier(\wt{v}, g\wt{v})$.
  Let $k$ be the minimal such value and note that $k \geq s$.
  Lemma \ref{lemma:paralem1} implies $g^{-i}\wt{M}$ does not regularly intersect $\wt{\Lambda}_0$ in $\Carrier(\wt{v},g\wt{v})$
  which implies that $g^{-k}\wt{M}$ regularly intersects $g^{-i}\wt{M}$ for $0 \leq i < k$.
  Then $Q^{k}_s = \{\wt{M}, \ldots , g^{-k}\wt{M}\}$ is a crossing set of cardinality at least $s+1$, since $g^{-k}\wt{M}$ regularly intersects all the other elements of $Q^k_s$ and $g^{-1}(Q^k_s - \{g^{-k}\wt{M}\}) = Q^k_s - \{\wt{M}\}$.

  Suppose that $g^{-i}\wt{M}$ does not regularly intersect $\wt{\Lambda}_0$ in $\Carrier(\wt{v}, g\wt{v})$ for all $i \geq 0$.
  Therefore each $g^{-i}\wt{M}$ belongs to some subset in the partition $\mathcal{P}$.
  Therefore there exists $r_i \in \mathbb{Z}$ and $\sigma(i) \in \{0,\ldots,p\}$ such that $g^{-i}\wt{M} = h^{\ola{n}r_i} \wt{\Lambda}_{\sigma(i)}$.
  We will obtain a contradiction by showing that $\{r_i \}$ is a bounded sequence, contradicting that $Q^{\infty}_s$ is an infinite set.
  From construction of the sequence

 \[
  g^{-1}h^{\ola{n}r_i} \wt{\Lambda}_{\sigma(i)} = g^{-1}(g^{-i} \wt{M}) = g^{-(i+1)}\wt{M} = h^{\ola{n}r_{i+1}} \wt{\Lambda}_{\sigma(i+1)},
 \]

 \noindent and from the properties of $\mathcal{P}$ it can be inferred that

 \[h^{\ola{n}r_{i+1}} \wt{\Lambda}_{\sigma(i+1)}
                                   = g^{-1}h^{\ola{n}r_i}\wt{\Lambda}_{\sigma(i)}
                                = h^{\ora{n}r_i} g^{-1} \wt{\Lambda}_{\sigma(i)}.\]
 \noindent Hence

  \[
  h^{\ola{n} r_{i+1} - \ora{n} r_i} \wt{\Lambda}_{\sigma(i+1)} = g^{-1}\wt{\Lambda}_{\sigma(i)},
 \]

 \noindent so $a_i = \ola{n} r_{i+1} - \ora{n} r_i$ is a bounded integer sequence because the right hand side can only be one of a finite number of walls.
 Therefore $r_{i+1} = \dfrac{\ora{n}}{\ola{n}}r_i + a_{i}$ is bounded since $ \dfrac{1}{R(g)} =\dfrac{\ora{n}}{\ola{n}} <1$.
 \end{proof}

 \noindent The following Lemma is used in the proof of Proposition \ref{prop:dilatedInfinite}.

 \begin{lem} \label{lemma:partition} Fix $g \in \stab(\wt{\Lambda}_0)$, and $h \in G_{\wt{v}}$ perpendicular to $\wt{\Lambda}_0 \cap X_{\wt{v}}$.
 Let $U$ be the set of $G$-translates of $\wt{\Lambda}_0$ that intersect $\wt{X}_{\wt{v}}$ and do not regularly intersect $\wt{\Lambda}_0$ in $\Carrier(\wt{v}, g\wt{v})$.
 Then there exist $\ola{n}, \ora{n} \in \mathbb{Z} - \{0\}$ such that $R(g) = \dfrac{\ola{n}}{\ora{n}}$, and a subset $\{ \wt{\Lambda}_1, \ldots, \wt{\Lambda}_p \} \subset U$  such that there is a partition of $P$,
 \[
   \mathcal{P} = \Big\{ \{ h^{\ola{n}r} \wt{\Lambda}_0 \}_{r \in \mathbb{Z}},  \ldots, \{ h^{\ola{n}r} \wt{\Lambda}_p \}_{r\in \mathbb{Z}}  \Big\},
  \]
 \noindent and such that $h^{\ola{n}} \wt{\Lambda}_i = g h^{\ora{n}} g^{-1} \wt{\Lambda}_i$ for each $i$.
 \end{lem}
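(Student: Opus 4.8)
The plan is to produce $\ola{n},\ora{n}$ as a single pair of $g$-shift exponents that serves simultaneously for every wall in $U$, and then to read off both the stated relation and the partition from the fact that every wall in $U$ runs parallel to $\wt{\Lambda}_0$ throughout $\Carrier(\wt{v},g\wt{v})$. First I would let $\ola{n},\ora{n}$ be $g$-shift exponents of $\wt{\Lambda}_0$ with respect to $\wt{X}_{\wt{v}}$ and $h$, obtained by applying Lemma~\ref{lemma:WallOrbits} with $\wt{v}_0=\wt{v}$, $\wt{v}_n=g\wt{v}$, and perpendicular elements $h$ at $\wt{v}$ and $ghg^{-1}$ at $g\wt{v}$. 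By the definition of the dilation function $R(g)=\ola{n}/\ora{n}$, and we have $h^{\ola{n}}\wt{\Lambda}_0=gh^{\ora{n}}g^{-1}\wt{\Lambda}_0$ together with $\{h^{\ola{n}r}\wt{\Lambda}_0\}_{r\in\mathbb{Z}}$ pairwise non-intersecting in the carrier.

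The key step is to verify that the \emph{same} exponents $\ola{n},\ora{n}$ work for every $W\in U$. By definition $W$ meets $\wt{X}_{\wt{v}}$ and has no regular intersection with $\wt{\Lambda}_0$ in the carrier, so in particular $W$ is parallel to $\wt{\Lambda}_0$ in $\wt{X}_{\wt{v}}$. Applying Lemma~\ref{lemma:paralem1} with $\wt{Y}=\Carrier(\wt{v},g\wt{v})$ shows that $W$ and $\wt{\Lambda}_0$ cross exactly the same vertex and edge spaces of the carrier and are parallel in each; in particular $W$ meets $\wt{X}_{g\wt{v}}$ and $\stab(W)\cap G_{\wt{v}_i}=\stab(\wt{\Lambda}_0)\cap G_{\wt{v}_i}=\la\rho_i\ra$ for every $i$. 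Since the exponents produced by Lemma~\ref{lemma:WallOrbits} are built from the intersection numbers $\ola{m}_i=\#[\rho_i,g_{i+1}]$ and $\ora{m}_i=\#[\rho_i,g_i]$, which depend only on the $\rho_i$, the shared edge generators $g_i$, and the common perpendicular elements $h$ and $ghg^{-1}$, applying that lemma to $W$ returns the identical exponents. This yields $h^{\ola{n}}W=(ghg^{-1})^{\ora{n}}W=gh^{\ora{n}}g^{-1}W$ for all $W\in U$, which is the asserted relation, and also that $\{h^{\ola{n}r}W\}_{r\in\mathbb{Z}}$ is pairwise non-intersecting in the carrier.

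To build the partition I would first show $U$ is invariant under $\la h^{\ola{n}}\ra$. Given $W\in U$, the family $\{h^{\ola{n}r}W\}_r$ just shown to be non-intersecting in the carrier lets me apply Lemma~\ref{lemma:paralem1} again, now to the pair $W,h^{\ola{n}}W$, to conclude that $h^{\ola{n}}W$ is parallel to $W$, hence to $\wt{\Lambda}_0$, throughout the carrier; thus $h^{\ola{n}}W$ has no regular intersection with $\wt{\Lambda}_0$ there and lies in $U$ (and likewise $h^{-\ola{n}}W\in U$). With $\la h^{\ola{n}}\ra$-invariance in hand, finiteness is a counting argument: there are only finitely many $G_{\wt{v}}$-orbits of walls meeting $\wt{X}_{\wt{v}}$, the full $G_{\wt{v}}$-orbit of such a wall consists of parallel perpendicular translates indexed by $\mathbb{Z}$ on which $h^{\ola{n}}$ acts by a nonzero shift, and invariance forces the indices occurring in $U$ to be a union of finitely many residue classes. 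Hence $U$ decomposes into finitely many $\la h^{\ola{n}}\ra$-orbits; choosing one representative from each, with $\wt{\Lambda}_0$ representing its own orbit and $\wt{\Lambda}_1,\ldots,\wt{\Lambda}_p$ the others, yields the partition $\mathcal{P}$.

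I expect the delicate point to be the interplay in the second paragraph: being parallel to $\wt{\Lambda}_0$ merely inside $\wt{X}_{\wt{v}}$ does \emph{not} force parallelism along the whole carrier, since two parallel lifts can diverge into different circles after crossing an edge space and then regularly intersect. One must therefore genuinely use the defining carrier condition of $U$ together with Lemma~\ref{lemma:paralem1} to upgrade parallelism in $\wt{X}_{\wt{v}}$ to parallelism throughout the carrier, and only then are the equalities $\stab(W)\cap G_{\wt{v}_i}=\la\rho_i\ra$ — and hence the equality of shift exponents — available. Everything else is bookkeeping with the homomorphism $R$ and Lemma~\ref{lemma:WallOrbits}.
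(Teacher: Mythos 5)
There is a genuine gap at the heart of your second paragraph. From parallelism of $W$ and $\wt{\Lambda}_0$ throughout $\Carrier(\wt{v},g\wt{v})$ you conclude that $\stab(W)\cap G_{\wt{v}_i}=\stab(\wt{\Lambda}_0)\cap G_{\wt{v}_i}=\la\rho_i\ra$ for every $i$, and hence that Lemma~\ref{lemma:WallOrbits} returns \emph{identical} exponents for $W$ and for $\wt{\Lambda}_0$. Parallelism does not give equality of stabilizers: it only gives that both stabilizers are generated by powers of the same primitive element, say $\stab(\wt{\Lambda}_0)\cap G_{\wt{v}_i}=\la\rho_i^{k_i}\ra$ and $\stab(W)\cap G_{\wt{v}_i}=\la\rho_i^{j_i}\ra$ with possibly $j_i\neq k_i$. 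This discrepancy genuinely occurs: the immersed wall $\Lambda$ is allowed to contain parallel circles of different lengths in a single vertex space, and then two parallel $G$-translates can run over different circles --- one covering a circle representing $\rho_i$, the other a circle representing $\rho_i^2$, say. Since the exponents of Lemma~\ref{lemma:WallOrbits} are products of the intersection numbers $\#[\rho_i^{j_i},g_{i+1}]=j_i\,\#[\rho_i,g_{i+1}]$ and $\#[\rho_i^{j_i},g_i]=j_i\,\#[\rho_i,g_i]$, they scale with the $j_i$; only their \emph{ratio} is independent of the $j_i$, because the $j_i$ cancel upon dividing. Consequently, for your fixed pair $(\ola{n},\ora{n})$ obtained from $\wt{\Lambda}_0$ alone, the relation $h^{\ola{n}}W=gh^{\ora{n}}g^{-1}W$ can fail for other $W\in U$: the exponent pair valid for $W$ agrees with $(\ola{n},\ora{n})$ only up to a rational scalar, and a rational rescaling of a valid shift relation need not be valid unless the scalar is an integer. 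Note that your closing remark, asserting that parallelism along the carrier makes ``the equalities $\stab(W)\cap G_{\wt{v}_i}=\la\rho_i\ra$'' available, simply reasserts this unproved (and false) claim.

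The repair is exactly the step the paper takes. Apply Lemma~\ref{lemma:WallOrbits} separately to each $W\in U$ (its hypotheses hold, as you correctly argue via Lemma~\ref{lemma:paralem1} and the carrier condition defining $U$, with $h$ and $ghg^{-1}$ serving as perpendicular elements for all walls in $U$ at once); observe that the resulting ratio $\ola{m}/\ora{m}$ equals $R(g)$ for every $W$ precisely because the $j_i$ cancel in the ratio; and then use the fact that the $j_i$ take only finitely many values to replace the individual exponent pairs by a common (integer) multiple $(\ola{n},\ora{n})$, still with $\ola{n}/\ora{n}=R(g)$, which satisfies $h^{\ola{n}}W=gh^{\ora{n}}g^{-1}W$ for every $W\in U$ simultaneously. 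Your third paragraph --- the $\la h^{\ola{n}}\ra$-invariance of $U$ and the finiteness/maximality argument producing the partition --- is essentially the paper's closing argument and goes through unchanged once the uniform exponents are obtained in this corrected way.
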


 \begin{proof}
 Let $\Carrier(\wt{v}, g\wt{v})$ be the union of the consecutive vertex and edge spaces $\wt{X}_{\wt{v}_0}, \wt{X}_{\wt{e}_1}, \ldots , \wt{X}_{\wt{e}_{\ell}}, \wt{X}_{\wt{v}_{\ell}}$.
 For each $i$ there exists primitive $\rho_i \in G_{\wt{v}_i}$ such that $\stab(\wt{\Lambda}_0) \cap G_{\wt{v}_i} = \la \rho_i^{k_i} \ra $ for some $k_i \in \mathbb{Z} - \{0\}$.
  Let $\wt{\Lambda}$ be any wall in $U$.
  Since $\wt{\Lambda}$ doesn't regularly intersect $\wt{\Lambda}_0$ in $\wt{X}_{\wt{v}_i}$ observe that $\stab(\wt{\Lambda}) \cap G_{\wt{v}_i} = \la \rho_i^{j_i} \ra$ for some $j_i >0$. 

   Let $G_{\wt{e}_i} = \la g_i \ra$ for $0<i <\ell+1$, and let $g_0 = h$ and $g_{\ell+1} = g h g^{-1}$.
   By Lemma \ref{lemma:WallOrbits} there exists $\ola{m}, \ora{m} \in \mathbb{Z} - \{0\}$ such that $ h^{\ola{m}}  \wt{\Lambda} =  g h^{\ora{m}} g^{-1}  \wt{\Lambda}$ and
   \[
     \dfrac{\ola{m}}{\ora{m}}  =
                \dfrac{\ola{m}_0 \cdots \ola{m}_{\ell}}{\ora{m}_0 \cdots \ora{m}_{\ell}}
                             =  \dfrac{\#[\rho_0, g_1] \cdots \#[\rho_{\ell}, g_{\ell+1}]}{\#[\rho_0, g_0] \cdots \#[\rho_{\ell}, g_{\ell} ]},
   \]
  \noindent where the $j_i$-s have canceled on the right hand side.
  Hence $\dfrac{\ola{m}}{\ora{m}} = R(g)$ because the final ratio is independent of the values of $j_i$.
  There are only finitely many values that $j_i$ can take, so we can obtain $\ola{n}, \ora{n}$ such that for any $\wt{\Lambda}$, we have $h^{\ola{n}}  \wt{\Lambda} =  g h^{\ora{n}} g^{-1}  \wt{\Lambda}$.
  Since there are only finitely many $G_{\wt{v}}$-orbits of walls intersecting $\wt{X}_{\wt{v}}$ the partition is obtained by taking a finite number of representatives $\wt{\Lambda}_1, \ldots, \wt{\Lambda}_p$, such that the sets $\{ h^{\ola{n}r} \wt{\Lambda}_i \}_{r \in \mathbb{Z}}$ are a maximal collection of disjoint sets.
 \end{proof}

 We now consider the non-dilated walls.

 \begin{lem} \label{lemma:disjointWallOrbit}
  Let $\wt{\Lambda}$ be a non-dilated wall.
  For any $\wt{X}_{\wt{v}}$ intersected by $\wt{\Lambda}$, there exists $h \in G_{\wt{v}}$ perpendicular to $\wt{X}_{\wt{v}} \cap \wt{\Lambda}$ such that if $g \in \stab(\wt{\Lambda})$ then $ghg^{-1}\wt{\Lambda} = h\wt{\Lambda}$, and the walls in $\{h^r \wt{\Lambda} \}_{r\in \mathbb{Z}}$ are pairwise disjoint.
 \end{lem}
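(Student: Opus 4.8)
\section*{Proof proposal for Lemma \ref{lemma:disjointWallOrbit}}

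The plan is to take $h$ to be a power $h = h_0^N$ of a primitive perpendicular element $h_0 \in G_{\wt v}$, with $N$ chosen so that $\stab(\wt\Lambda)$ permutes the parallel family $\{h^r\wt\Lambda\}_{r\in\mathbb{Z}}$ by the rule $g\cdot h^r\wt\Lambda = h^{R(g)r}\wt\Lambda$, and then to read off both conclusions. First I would fix a primitive $h_0 \in G_{\wt v}$ perpendicular to $\wt\Lambda \cap \wt{X}_{\wt v}$ and recall the mechanism behind $R$. For a fixed $g \in \stab(\wt\Lambda)$, Lemma \ref{lemma:partition} supplies exponents $\ola n, \ora n$ with $\ola n/\ora n = R(g)$ such that $h_0^{\ola n}\wt\Lambda' = g h_0^{\ora n} g^{-1}\wt\Lambda'$ holds \emph{simultaneously} for every translate $\wt\Lambda'$ in the set $U$ of $g$-carrier-parallel walls; since $\wt\Lambda$ is non-dilated, $R(g) = \pm1$ and $\ola n = \pm \ora n$ by Definition \ref{defn:dilated}. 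The crucial observation is that each shifted wall $h_0^{\ola n j}\wt\Lambda$ again belongs to $U$, so writing $P = g h_0^{\ola n} g^{-1}$ and applying the relation to $\wt\Lambda' = h_0^{\ola n j}\wt\Lambda$ gives $P(h_0^{\ola n j}\wt\Lambda) = h_0^{\ola n(j + R(g))}\wt\Lambda$. Iterating yields $g(h_0^{\ola n c}\wt\Lambda) = P^{c}\wt\Lambda = h_0^{R(g)\ola n c}\wt\Lambda$ for every $c$; in particular every integer multiple of $\ola n$ is again a shift exponent for $g$.

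Because $\stab(\wt\Lambda) \cong \pi_1\Lambda$ is finitely generated, I would then choose generators $g_1,\dots,g_s$, obtain a shift exponent $\ola n_j$ for each $g_j$ as above, set $N = \lcm(\ola n_1,\dots,\ola n_s)$ and put $h = h_0^N$. Since multiples of each $\ola n_j$ remain shift exponents, $g_j(h^r\wt\Lambda) = h^{R(g_j)r}\wt\Lambda$ for all $j$ and $r$. As $R$ is a homomorphism (Lemma \ref{lemma:homomorphismR}), the prescription $g\cdot h^r\wt\Lambda = h^{R(g)r}\wt\Lambda$ is consistent under composition, so it extends from the generators to all of $\stab(\wt\Lambda)$. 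Thus $\stab(\wt\Lambda)$ stabilises $\{h^r\wt\Lambda\}_{r\in\mathbb{Z}}$ setwise, and for $g$ in the kernel of $R$ it fixes each member: $ghg^{-1}\wt\Lambda = g(h\wt\Lambda) = h\wt\Lambda$, which is the first assertion.

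For disjointness I would first note that $\wt\Lambda$ and any $h^r\wt\Lambda$ are parallel in every vertex space they both meet, by Lemma \ref{lemma:paralem1}, so no two members of the family intersect regularly; only non-regular intersections remain to be excluded, and these occur only in the finitely many $\stab(\wt\Lambda)$-orbits of vertex and edge spaces along the subtree $T_\Lambda \subset \wt\Gamma$ onto which $\wt\Lambda$ projects. Enlarging $N$ if necessary so that it also witnesses Lemma \ref{lemma:WallOrbits} on a carrier $\Carrier(\wt v, g\wt v)$ whose $\stab(\wt\Lambda)$-translates cover $T_\Lambda$, the family has no intersections at all inside that carrier. Since $\Lambda$ is compact, $\stab(\wt\Lambda)$ acts cocompactly on $T_\Lambda$, and since it preserves $\{h^r\wt\Lambda\}$ setwise, any hypothetical intersection of $h^r\wt\Lambda$ and $h^s\wt\Lambda$ could be translated into the chosen carrier without leaving the family, contradicting Lemma \ref{lemma:WallOrbits}. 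Hence the walls are pairwise disjoint.

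The step I expect to be the main obstacle is securing a \emph{single} exponent $N$ valid for all of $\stab(\wt\Lambda)$ at once, rather than one element at a time: this is exactly where the strong form of Lemma \ref{lemma:partition} (the shift relation holds for \emph{every} wall of $U$, so multiples of a shift exponent are again shift exponents) combines with the homomorphism property of $R$ to let me pass from generators to the whole group and take a least common multiple. A secondary subtlety is the sign of $R$: when the image of $R$ is $\{-1,1\}$, an element $g$ with $R(g) = -1$ merely reverses the family, giving $ghg^{-1}\wt\Lambda = h^{-1}\wt\Lambda$, so the exact equality $ghg^{-1}\wt\Lambda = h\wt\Lambda$ is available on the index-two subgroup $\ker R$; the disjointness conclusion, which uses only setwise invariance of $\{h^r\wt\Lambda\}$, is unaffected by this sign.
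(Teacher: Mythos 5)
Your proposal is correct and follows essentially the same route as the paper's proof: fix a primitive perpendicular element $h_0$, obtain shift exponents for each member of a finite generating set of $\stab(\wt{\Lambda})$, take a least common multiple to get a single power $h=h_0^N$, extend from generators to all of $\stab(\wt{\Lambda})$ (the paper says ``any $g$ can be expressed as a product of the generating set,'' which is exactly your use of the homomorphism property of $R$), and then deduce global disjointness by combining the no-intersection-in-carriers conclusion of Lemma \ref{lemma:WallOrbits} with cocompactness of the $\stab(\wt{\Lambda})$-action (the paper extends geodesics inside $\wt{\Lambda}$ to place any given vertex space in a carrier $\Carrier(\wt{v},g\wt{v})$, where you translate intersections back into fixed carriers along $T_\Lambda$ --- the same idea). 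Two minor remarks: your opening appeal to Lemma \ref{lemma:paralem1} to rule out regular intersections is backwards (that lemma \emph{assumes} the absence of regular intersections on a subtree), but it is also redundant since your carrier argument excludes regular and non-regular intersections alike; and your sign caveat --- that the exact equality $ghg^{-1}\wt{\Lambda}=h\wt{\Lambda}$ is only guaranteed on $\ker R$, with $gh^{\pm1}g^{-1}\wt{\Lambda}=h\wt{\Lambda}$ in general --- matches what the paper's own proof actually establishes and what its later application in Proposition \ref{prop:partition} accommodates.
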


 \begin{proof}
 Choose a generating set $\{f_1,\ldots,f_n\}$ for $\stab(\wt{\Lambda})$, and fix $h_0 \in G_{\wt{v}}$ perpendicular to $\wt{X}_{\wt{v}} \cap \wt{\Lambda}$.
 By Lemma \ref{lemma:WallOrbits} and the assumption that $\wt{\Lambda}$ is nondilated, each $f_i$ has an $s_i$ such that $ h_0^{s_i}  \wt{\Lambda} =  f_i h_0^{\pm s_i}f_i^{-1}  \wt{\Lambda}$ and the walls $\{ h_0^{s_ir}  \wt{\Lambda} \}$ pairwise do not intersect in $\Carrier(\wt{v}, f_i\wt{v})$.
 Let $s = \lcm{s_i}$ and let $h = h_0^s$, then $h\wt{\Lambda} = gh^{\pm 1}g^{-1} \wt{\Lambda}$ and $\{h^r \wt{\Lambda}\}$ pairwise do not intersect in $\Carrier(\wt{v}, g\wt{v})$, for all $g\in \stab(\wt{\Lambda})$, as any $g$ can be expressed as the product of the generating set.

 For any other $\wt{X}_{\wt{w}}$ intersected by $\wt{\Lambda}$, there is a geodesic $\gamma: I \rightarrow \wt{\Lambda}$ with initial point $\wt{a} \in \wt{\Lambda} \cap \wt{X}_{\wt{v}}$ and endpoint in $\wt{\Lambda} \cap \wt{X}_{\wt{w}}$.
 Extend $\gamma$ to a geodesic $\gamma'$ such that the endpoint of $\gamma'$ is $g\wt{a}$ for some $g \in \stab(\wt{\Lambda})$.
 Therefore $h  \wt{\Lambda} =  g h^{\pm 1} g^{-1} \wt{\Lambda}$ and for all $r$, that $h^r \wt{\Lambda} \cap \wt{\Lambda} \cap \wt{X}_{\wt{w}} = \emptyset$ as $\wt{X}_{\wt{w}}$ lies in $\Carrier(\wt{v},g\wt{v})$.
 This implies that the walls in $\{ h^r \wt{\Lambda} \}_{r\in\mathbb{Z}}$ are pairwise disjoint.
 \end{proof}

 \begin{prop} \label{prop:nondilatedFinite}
 Let $(\wt{X}, \mathcal{W})$ be the wallspace obtained from a finite set of non-dilated immersed walls, then $C(\wt{X}, \mathcal{W})$ is finite dimensional.
 \end{prop}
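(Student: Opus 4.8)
The plan is to bound, uniformly over all of $\wt{X}$, the cardinality of any collection of pairwise crossing walls. Since the dimension of the cube complex produced by Sageev's construction is the supremum of the cardinalities of such collections (see \cite{Sageev95, HruskaWise13}), a uniform bound yields finite dimensionality. The key preliminary remark is that two walls which are disjoint in $\wt{X}$ cannot cross: a wall is connected, so if $\wt{\Lambda}_1 \cap \wt{\Lambda}_2 = \emptyset$ then $\wt{\Lambda}_1$ lies in a single component of $\wt{X} - \wt{\Lambda}_2$, leaving one of the four corners empty. In particular every pairwise crossing family is pairwise intersecting, so Lemma~\ref{lemma:Helly} applies and all members of such a family meet a common vertex space $\wt{X}_{\wt{v}}$. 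As $G$ acts cocompactly on $\wt{\Gamma}$ there are finitely many orbits of vertex spaces, and every constant below will be $G$-equivariant, so it suffices to bound, for each orbit representative $\wt{v}$, the size of a pairwise crossing family through $\wt{X}_{\wt{v}}$ and then take a maximum.

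First I would dispose of the vertical walls. The image of a vertical wall in $\wt{\Gamma}$ is a single edge, and two distinct edges of a tree are never linked; hence no two vertical walls can have all four corners nonempty, so they never cross. A pairwise crossing family therefore contains at most one vertical wall, contributing at most $1$ to the bound, and it remains to bound the horizontal walls in the family.

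The main step treats the horizontal walls meeting $\wt{X}_{\wt{v}}$, grouped by $G_{\wt{v}}$-orbit; there are only finitely many such orbits, say $K = K_{\wt{v}}$. Because $G_{\wt{v}} \cong \mathbb{Z}^2$ acts on $\wt{X}_{\wt{v}} \cong \mathbb{R}^2$ by translations, the lines cut out by a single $G_{\wt{v}}$-orbit are mutually parallel in $\wt{X}_{\wt{v}}$, whereas two walls from different orbits already intersect regularly inside $\wt{X}_{\wt{v}}$ and cause no difficulty. The content is to bound the number of walls of one fixed orbit $G_{\wt{v}}\wt{\Lambda}_0$ that can lie in a pairwise crossing family, and this is precisely where non-dilatedness enters. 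By Lemma~\ref{lemma:disjointWallOrbit} there is a perpendicular $h \in G_{\wt{v}}$ for which the walls $\{h^r\wt{\Lambda}_0\}_{r\in\mathbb{Z}}$ are pairwise disjoint. The walls of the orbit that are parallel to $\wt{\Lambda}_0$ in $\wt{X}_{\wt{v}}$ are totally ordered by perpendicular position, and applying powers of a primitive perpendicular to the conclusion of Lemma~\ref{lemma:disjointWallOrbit} shows that any two walls of the orbit whose perpendicular positions agree modulo a fixed positive integer $c_i$ are disjoint, hence do not cross. A pairwise crossing family can therefore contain at most one wall from each of the $c_i$ residue classes, i.e.\ at most $c_i$ walls from the orbit. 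Summing over the $K$ orbits gives $N_{\wt{v}} := \sum_{i=1}^{K} c_i$, and finally $N := 1 + \max_{\wt{v}} N_{\wt{v}}$ over the finitely many vertex-space orbits bounds the size of every pairwise crossing family, whence $\dim C(\wt{X}, \mathcal{W}) \leq N$.

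I expect the per-orbit estimate to be the one step demanding care: one must verify that the disjointness of the perpendicular translates $\{h^r\wt{\Lambda}_0\}$ forces the entire orbit to decompose into finitely many pairwise-disjoint sub-families, so that the modulus $c_i$ is genuinely finite. This is the single-orbit analogue of the partition furnished by Lemma~\ref{lemma:partition} in the dilated case, run with ratio $R(g) = \pm 1$ rather than $|R(g)|>1$. A pleasant feature of organizing the argument through disjointness is that it only ever uses the implication "disjoint $\Rightarrow$ non-crossing,'' and so it never requires distinguishing regular from non-regular intersections; this sidesteps the delicate analysis of non-regular intersections that would otherwise be needed to relate crossing to regular intersection.
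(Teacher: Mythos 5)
Your overall route is the one the paper takes: Lemma~\ref{lemma:Helly} to pass to a common vertex space, reduction to a single $G_{\wt{v}}$-orbit of walls, Lemma~\ref{lemma:disjointWallOrbit} to produce the pairwise disjoint family $\{h^r\wt{\Lambda}_0\}_{r\in\mathbb{Z}}$, and a pigeonhole over finitely many disjoint sub-families. Phrasing it as a direct uniform bound rather than a contradiction with sequences of sets $K_i$, and treating vertical walls explicitly (which the paper glosses over), are cosmetic differences.

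However, your per-orbit count has a genuine gap: you assume the walls of a $G_{\wt{v}}$-orbit meeting $\wt{X}_{\wt{v}}$ are faithfully indexed (``totally ordered'') by perpendicular position. They are not: two \emph{distinct} walls in the orbit can cut out the \emph{same} line in $\wt{X}_{\wt{v}}$. This happens whenever $\stab(\wt{\Lambda}_0)\cap G_{\wt{v}}$ is a proper subgroup of the full stabilizer $\langle\rho\rangle$ of the line, e.g.\ when a circle of the equitable set is a proper power such as $a_3^2$ in Example~\ref{RAAGexample}, which the paper explicitly allows. For such a pair $\wt{\Lambda}_0$ and $\rho\wt{\Lambda}_0$ your criterion ``positions agree mod $c_i$'' is satisfied (the positions are equal), yet the walls are \emph{not} disjoint: they overlap along the shared line (a non-regular intersection in the sense of Remark~\ref{intersections}), and since the arcs of $\rho\wt{\Lambda}_0$ leaving that line are translates of arcs based at different points of the underlying circle, the two walls can intersect regularly --- hence cross --- in an adjacent vertex space. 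So a residue class can contribute more than one wall to a crossing family and the bound of $c_i$ per orbit fails; note this is exactly the non-regular-intersection subtlety you claim your organization sidesteps. The repair is what the paper actually does: partition the orbit not by position residues but into the $G_{\wt{v}}$-translates $g\{h^r\wt{\Lambda}_0\}_{r\in\mathbb{Z}}$. Each translate is pairwise disjoint, two walls sharing a line lie in different translates, and there are finitely many translates because the setwise stabilizer of the family contains $\langle h\rangle\cdot\big(\stab(\wt{\Lambda}_0)\cap G_{\wt{v}}\big)$, which has finite index in $G_{\wt{v}}\cong\mathbb{Z}^2$. (Alternatively, multiply your $c_i$ by the finite number of walls per line.) Your side claim that walls from different $G_{\wt{v}}$-orbits ``already intersect regularly inside $\wt{X}_{\wt{v}}$'' is also false --- distinct orbits can cut out parallel or even identical lines --- but that one is harmless, since you only ever sum the per-orbit bounds.
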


 \begin{proof}
  Let $\{K_i \}$ be a sequence of finite sets of pairwise intersecting walls such that $\lim|K_i| = \infty$.
  By Lemma \ref{lemma:Helly} there exist vertex spaces $\wt{X}_{\wt{v}_i}$ such that each wall in $K_i$ intersects $\wt{X}_{\wt{v}_i}$. Since there are finitely many $G$-orbits of vertices in the Bass-Serre tree, choose a subsequence of $\{K_i\}$ and find a sequence $\{g_i\} \subset G$ such that all $\{ g_iK_i \}$ intersect a fixed $\wt{X}_{\wt{v}}$.
  Since there are finitely many $G_{\wt{v}}$-orbits of walls intersecting $\wt{X}_{\wt{v}}$, we may restrict to subsets, still of unbounded cardinality, such that all the walls lie in the same $G_{\wt{v}}$-orbit.

  Fix some $\wt{\Lambda}$ in $g_1K_1$.
  By Lemma \ref{lemma:disjointWallOrbit} there exists $h \in G_{\wt{v}}$ such that $\{  h^r \wt{\Lambda}\}_{r\in \mathbb{Z}} $ consists of pairwise non-intersecting walls.
  There are only finitely many $G_{\wt{v}}$-orbits of $\{ h^r \wt{\Lambda}\}_{r\in \mathbb{Z}}$, so by the pigeonhole principle there must be some $g_iK_i$ that has more than one wall in one of these orbits.
  This contradicts that the walls in $g_iK_i$ pairwise intersect.
 \end{proof}

 \begin{thm}  \label{theorem:MainA}
  Let $X$ be tubular space, and $(\wt{X}, \mathcal{W})$ the wallspace obtained from a finite set of immersed walls in $X$. The following are equivalent:
 \begin{enumerate}
  \item \label{C1}  $C(\wt{X},\mathcal{W})$ is infinite dimensional.
  \item \label{C2}  $C(\wt{X},\mathcal{W})$ contains an infinite cube.
  \item \label{C3} Some immersed wall is dilated.
 \end{enumerate}
 \end{thm}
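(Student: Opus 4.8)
The plan is to prove the equivalence by establishing the cycle of implications $(\ref{C3}) \Rightarrow (\ref{C2}) \Rightarrow (\ref{C1}) \Rightarrow (\ref{C3})$, since all three of the technical inputs required have already been assembled in the preceding propositions. The implication $(\ref{C2}) \Rightarrow (\ref{C1})$ is immediate from the definition of an infinite cube: an infinite cube is the union of an ascending sequence of $n$-cubes $c_n$, so its presence in $C(\wt{X}, \mathcal{W})$ forces cubes of every dimension, and hence the complex is infinite dimensional.

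For $(\ref{C3}) \Rightarrow (\ref{C2})$ I would chain the two propositions already proved. If some immersed wall is dilated then, by Definition \ref{defn:dilated}, at least one of its associated walls $\wt{\Lambda}$ is a dilated wall in $(\wt{X}, \mathcal{W})$. Proposition \ref{prop:dilatedInfinite} then produces crossing sets of arbitrary finite cardinality, and Proposition \ref{WeakInfDimInfCube} converts the existence of such crossing sets into an infinite cube in $C(\wt{X}, \mathcal{W})$.

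The remaining implication $(\ref{C1}) \Rightarrow (\ref{C3})$ I would argue by contraposition. Suppose no immersed wall is dilated; since the collection of immersed walls is finite and dilatedness is a $G$-invariant property of an immersed wall by Lemma \ref{lemma:homomorphismR}, this is precisely the hypothesis that $(\wt{X}, \mathcal{W})$ arises from a finite set of non-dilated immersed walls. Proposition \ref{prop:nondilatedFinite} then yields that $C(\wt{X}, \mathcal{W})$ is finite dimensional, contradicting $(\ref{C1})$.

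Since every substantive step has been carried out in the preceding propositions, I do not expect a genuine obstacle here; the theorem is essentially the bookkeeping that knits these results into the three stated equivalences. The only point requiring a moment's care is to verify that the cycle is logically closed and that the dichotomy ``some immersed wall is dilated'' versus ``the finite set consists of non-dilated immersed walls'' is exhaustive, which is guaranteed by the $G$-invariance recorded in Lemma \ref{lemma:homomorphismR} together with the definition of dilatedness in Definition \ref{defn:dilated}.
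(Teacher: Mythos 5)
Your proposal is correct and follows the paper's proof exactly: the same cycle of implications, with (\ref{C3})~$\Rightarrow$~(\ref{C2}) via Propositions \ref{prop:dilatedInfinite} and \ref{WeakInfDimInfCube}, (\ref{C2})~$\Rightarrow$~(\ref{C1}) immediate from the definition of an infinite cube, and (\ref{C1})~$\Rightarrow$~(\ref{C3}) as the contrapositive of Proposition \ref{prop:nondilatedFinite}. Your extra remark that $G$-invariance of dilatedness (Lemma \ref{lemma:homomorphismR}) makes the dichotomy exhaustive is a sound observation that the paper leaves implicit in Definition \ref{defn:dilated}.
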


 \begin{proof}
  (\ref{C3}) $\Rightarrow$ (\ref{C2}) follows from Proposition \ref{prop:dilatedInfinite} and Proposition \ref{WeakInfDimInfCube}.

  (\ref{C2}) $\Rightarrow$ (\ref{C1}) is immediate from the definition of an infinite cube.

  (\ref{C1}) $\Rightarrow$ (\ref{C3}) is the contrapositive of \ref{prop:nondilatedFinite}.
 \end{proof}

 \begin{cor} \label{cor:decidable}
  Let $X$ be tubular space, and $(\wt{X}, \mathcal{W})$ the wallspace obtained from a set of immersed walls in $X$.
  It is decidable whether or not $C(\wt{X}, \mathcal{W})$ is finite dimensional.
 \end{cor}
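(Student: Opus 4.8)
The plan is to reduce the decision problem to Theorem~\ref{theorem:MainA} and then show that dilatedness of a single immersed wall is decidable. By Theorem~\ref{theorem:MainA}, $C(\wt{X}, \mathcal{W})$ is finite dimensional if and only if none of the (finitely many) immersed walls is dilated. Since the set of immersed walls is finite, it suffices to exhibit an algorithm that decides, for a given immersed wall $\Lambda$, whether it is dilated, i.e. whether its dilation function $R : \stab(\wt{\Lambda}) \to \mathbb{Q}^*$ has infinite image.

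First I would exploit the algebraic structure of the image. Since $\mathbb{Q}^*$ is abelian and $R$ is a homomorphism (Lemma~\ref{lemma:homomorphismR}), $R$ factors through the abelianization of $\stab(\wt{\Lambda})$. The immersed wall $\Lambda$ is a finite graph, so $\stab(\wt{\Lambda}) \cong \pi_1 \Lambda$ is free of finite rank, and a free basis $\{g_1, \dots, g_n\}$ can be read off from a spanning tree of $\Lambda$. The image of $R$ is then the subgroup of $\mathbb{Q}^*$ generated by $R(g_1), \dots, R(g_n)$. A nonzero rational $q$ has infinite multiplicative order precisely when $q \neq \pm 1$, because the torsion subgroup of $\mathbb{Q}^*$ is $\{-1,1\}$. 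Hence the image is infinite if and only if $R(g_i) \neq \pm 1$ for some $i$, and is otherwise contained in $\{-1,1\}$. Thus $\Lambda$ is dilated if and only if some basis element has $R(g_i) \neq \pm 1$, reducing the problem to computing finitely many values of $R$.

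The key computational step is to evaluate each $R(g_i)$ from the combinatorial data. Every basis element $g_i$ corresponds to a loop in $\Lambda$ whose lift to $\wt{\Lambda}$ runs from a point $\wt{a} \in \wt{X}_{\wt{v}}$ to $g_i \wt{a}$, passing through the consecutive vertex and edge spaces along $\Carrier(\wt{v}, g_i \wt{v})$. Lemma~\ref{lemma:WallOrbits} expresses the shift exponents, and hence $R(g_i) = \ola{m}/\ora{m}$, as an explicit product of ratios of geometric intersection numbers of the form $\#[\rho_j, g_{j+1}]/\#[\rho_j, g_j]$. Using the recalled identity $\#[\alpha, \beta] = \det\big[[\alpha], [\beta]\big]$, each such number is the absolute value of a determinant of a $2 \times 2$ integer matrix whose entries are read directly off the homotopy classes appearing in the equitable set and off the attaching maps of the tubular space. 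These determinants are effectively computable, so each $R(g_i)$ can be obtained exactly as a rational number and tested against $\pm 1$.

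The main obstacle is not any single step but making the bookkeeping precise: one must identify the free basis of $\stab(\wt{\Lambda})$ with concrete loops in the finite graph $\Lambda$, track the sequence of vertex and edge spaces that each lift traverses, and verify that the perpendicular elements $\rho_j$ and edge generators $g_j$ entering Lemma~\ref{lemma:WallOrbits} can themselves be extracted algorithmically from the input. Once this translation is carried out, the computation of each $R(g_i)$ is a finite product of determinant ratios, and the resulting decision procedure terminates.
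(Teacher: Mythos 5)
Your proposal is correct and follows essentially the same route as the paper: the paper's proof is exactly to invoke Theorem~\ref{theorem:MainA} and compute the dilation function on a finite generating set of $\pi_1\Lambda$ via Lemmas~\ref{lemma:CyclicMatching} and~\ref{lemma:WallOrbits}. You merely make explicit the details the paper leaves implicit (that it suffices to test generators since the torsion of $\mathbb{Q}^*$ is $\{\pm 1\}$, and that the intersection numbers are computable as $2\times 2$ determinants), which is a faithful elaboration rather than a different argument.
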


 \begin{proof}
  For each immersed wall $\Lambda$, the corresponding dilation function can be computed by using Lemmas \ref{lemma:CyclicMatching} and \ref{lemma:WallOrbits} to find their values on a finite generating set of $\pi_1 \Lambda$.
 \end{proof}

  \noindent The following characterization of non-dilated immersed walls will be used in \cite{Woodhouse214}.

  \begin{prop} \label{prop:partition}
 Let $X$ be tubular space, and $(\wt{X}, \mathcal{W})$ the wallspace obtained from a finite set of immersed walls in $X$. 
  The horizontal walls in $\mathcal{W}$ can be partitioned into a collection $\mathcal{A}$ of subsets such that:
  \begin{enumerate}
   \item \label{part:1} The partition is preserved by $G$,
   \item \label{part:2} The walls in $A$ are pairwise non-intersecting for each $A \in \mathcal{A}$,
   \item \label{part:3} For each $\wt{X}_{\wt{v}}$ only finitely many $A \in \mathcal{A}$ contain walls intersecting $\wt{X}_{\wt{v}}$
   \item \label{part:4} Let $\wt{\Lambda} \in A \in \mathcal{A}$ be a wall intersecting $\wt{X}_{\wt{v}}$. There exists $h \in G_{\wt{v}}$ perpendicular to $\wt{\Lambda} \cap \wt{X}_{\wt{v}}$ such that $A = \{ h^r \wt{\Lambda} \}_{r\in \mathbb{Z}}$.
  \end{enumerate}
 \end{prop}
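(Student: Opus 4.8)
The plan is to realize $\mathcal{A}$ as the set of $G$-translates of the ``pencils'' $\{h^r\wt{\Lambda}\}_{r\in\mathbb{Z}}$ furnished by Lemma \ref{lemma:disjointWallOrbit}, one pencil per $G$-orbit of horizontal walls, and then to verify the four listed properties. Note first that the hypothesis under which such a partition can exist is precisely non-dilation: if some immersed wall were dilated, then no partition satisfying (\ref{part:2}) and (\ref{part:3}) could exist, since by Lemma \ref{lemma:Helly} a crossing set lies in a single vertex space $\wt{X}_{\wt{v}}$, so by (\ref{part:2}) it meets each of the finitely many parts through $\wt{X}_{\wt{v}}$ at most once, and is therefore bounded, contradicting Proposition \ref{prop:dilatedInfinite}. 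Accordingly, as the statement intends, I assume throughout that the immersed walls are non-dilated.

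Since there are finitely many immersed walls there are finitely many $G$-orbits of horizontal walls. For a representative $\wt{\Lambda}$ of each orbit I fix a vertex space $\wt{X}_{\wt{v}}$ it meets and apply Lemma \ref{lemma:disjointWallOrbit} to obtain a perpendicular $h\in G_{\wt{v}}$ with $ghg^{-1}\wt{\Lambda}=h\wt{\Lambda}$ for all $g\in\stab(\wt{\Lambda})$ and $\{h^r\wt{\Lambda}\}_{r\in\mathbb{Z}}$ pairwise disjoint; set $A_{\wt{\Lambda}}=\{h^r\wt{\Lambda}\}_{r\in\mathbb{Z}}$. The first step is to upgrade $ghg^{-1}\wt{\Lambda}=h\wt{\Lambda}$ to the assertion that each $g\in\stab(\wt{\Lambda})$ fixes every member of $A_{\wt{\Lambda}}$. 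Writing $ghg^{-1}=hk$ with $k:=h^{-1}ghg^{-1}$, the relation forces $k\wt{\Lambda}=\wt{\Lambda}$, so $k\in\stab(\wt{\Lambda})$, and a short induction on $|r|$ (pushing $g$ past one factor of $h$ and feeding the resulting stabilizer element back into the inductive hypothesis) yields $g h^r\wt{\Lambda}=h^r\wt{\Lambda}$ for all $r$. Hence $gA_{\wt{\Lambda}}=A_{\wt{\Lambda}}$ for $g\in\stab(\wt{\Lambda})$, so the translate $gA_{\wt{\Lambda}}$ depends only on the coset $g\stab(\wt{\Lambda})$, i.e.\ only on the wall $g\wt{\Lambda}$, and I take $\mathcal{A}$ to consist of all such translates. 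Two translates that share a wall must coincide: reducing to $g:=g_2^{-1}g_1$ with $gh^{r_1}\wt{\Lambda}=h^{r_2}\wt{\Lambda}$, one writes $g=h^{r_2}sh^{-r_1}$ with $s\in\stab(\wt{\Lambda})$ and uses $sh^m\wt{\Lambda}=h^m\wt{\Lambda}$ to conclude $gA_{\wt{\Lambda}}=A_{\wt{\Lambda}}$. Thus $\mathcal{A}$ is a genuine partition; property (\ref{part:1}) holds by construction and property (\ref{part:2}) is the disjointness clause of Lemma \ref{lemma:disjointWallOrbit}, transported by $G$.

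The heart of the argument is property (\ref{part:4}): given $A\in\mathcal{A}$, a wall $\wt{\Lambda}\in A$, and any vertex space $\wt{X}_{\wt{w}}$ met by $\wt{\Lambda}$, I must produce a perpendicular $h_{\wt{w}}\in G_{\wt{w}}$ with $A=\{h_{\wt{w}}^r\wt{\Lambda}\}$. By $G$-equivariance I may take $A=A_{\wt{\Lambda}}$ built at $\wt{X}_{\wt{v}}$. For $\wt{w}=g\wt{v}$ with $g\in\stab(\wt{\Lambda})$ this is immediate: $h_{\wt{w}}:=ghg^{-1}$ is perpendicular to $\wt{\Lambda}\cap\wt{X}_{\wt{w}}$ and $\{h_{\wt{w}}^r\wt{\Lambda}\}=g\{h^r\wt{\Lambda}\}=gA=A$. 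Because $\wt{\Lambda}$ covers a compact graph, $\stab(\wt{\Lambda})$ acts cocompactly on the subtree of $\wt{\Gamma}$ carrying $\wt{\Lambda}$, so only finitely many $\stab(\wt{\Lambda})$-orbits of vertex spaces remain; for a representative $\wt{w}$ of each I transport $h$ along $\wt{\Lambda}$ from $\wt{v}$ to $\wt{w}$ using Lemma \ref{lemma:WallOrbits}. Here non-dilation enters decisively: the dilation function takes values in $\{\pm1\}$, which is exactly the statement that the perpendicular spacing of $\{h^r\wt{\Lambda}\}$ is neither expanded nor contracted as one passes between adjacent vertex spaces along $\wt{\Lambda}$ (for a dilated wall this spacing grows geometrically, the mechanism behind Proposition \ref{prop:dilatedInfinite}). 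Consequently the image of $A$ in $\wt{X}_{\wt{w}}$ is again a perpendicular arithmetic progression of parallel lines, generated by a single perpendicular $h_{\wt{w}}\in G_{\wt{w}}$, giving $A=\{h_{\wt{w}}^r\wt{\Lambda}\}$.

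Property (\ref{part:3}) then follows from (\ref{part:4}): a part $A$ meeting $\wt{X}_{\wt{v}}$ meets it in a perpendicular arithmetic progression of lines lying in a single $G_{\wt{v}}$-orbit and parallelism class, and since there are finitely many $G_{\wt{v}}$-orbits of walls meeting $\wt{X}_{\wt{v}}$, each contributing one $\mathbb{Z}$-family of parallel lines covered by finitely many such progressions, only finitely many parts meet $\wt{X}_{\wt{v}}$. I expect property (\ref{part:4}) to be the main obstacle: showing that the perpendicular spacing of a pencil is preserved verbatim at every vertex space it visits is the quantitative substance of non-dilation, and making the transport of Lemma \ref{lemma:WallOrbits} interact correctly with the choice of perpendicular \emph{generators} of the pencil — rather than merely with shift exponents — is the step that will require the most care.
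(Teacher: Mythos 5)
Your overall skeleton---pencils $\{h^r\wt{\Lambda}\}_{r\in\mathbb{Z}}$ supplied by Lemma \ref{lemma:disjointWallOrbit}, one per $G$-orbit of horizontal walls, translated around by $G$---is the same as the paper's, and your opening observation is correct: the statement implicitly assumes the immersed walls are non-dilated (the paper's own proof invokes Lemma \ref{lemma:disjointWallOrbit}, which requires it). Your verification of (\ref{part:1}) and (\ref{part:2}) is essentially the paper's. But there is a genuine gap, and it sits exactly where you predicted: property (\ref{part:4}). You fix the perpendicular $h$, and hence the part $A=\{h^r\wt{\Lambda}\}_{r\in\mathbb{Z}}$, at a \emph{single} vertex space $\wt{X}_{\wt{v}}$, and then assert that at every other vertex space $\wt{X}_{\wt{w}}$ met by $\wt{\Lambda}$, non-dilation forces $A$ to be generated by a perpendicular in $G_{\wt{w}}$. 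This conflates two different quantities. Non-dilation says $R(g)=\pm 1$ for $g\in\stab(\wt{\Lambda})$, i.e.\ it controls shift-exponent ratios around \emph{closed loops}. The transport from $\wt{v}$ to $\wt{w}$ is governed instead by the exponents $\ola{m},\ora{m}$ of Lemma \ref{lemma:WallOrbits} along the geodesic from $\wt{v}$ to $\wt{w}$, and these are products of intersection numbers which need not be equal even when the wall is non-dilated: for the non-dilated wall of Example \ref{arcChoiceDependent}, crossing a single edge space multiplies the spacing by $2$ (only the product around loops is $1$). Lemma \ref{lemma:WallOrbits} yields only commensurability, $h^{\ola{m}}\wt{\Lambda}=h_{\wt{w}}^{\ora{m}}\wt{\Lambda}$ with $h_{\wt{w}}$ a primitive perpendicular at $\wt{w}$. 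If, say, $\ola{m}=2$ and $\ora{m}=1$, then a perpendicular generating $A$ at $\wt{w}$ would have to be $h_{\wt{w}}^{k}$ with $2k=1$, which is impossible: the walls $h^{2r+1}\wt{\Lambda}$ sit at ``half-integer'' positions that no element of $G_{\wt{w}}$ can reach. So the pencil built at one vertex space simply need not satisfy (\ref{part:4}) elsewhere, and no amount of care in the transport repairs this.

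The missing idea is to make the choice of $h$ global \emph{before} defining $A$. The paper takes representatives $\ell_1,\ldots,\ell_n$ of the finitely many $\stab(\wt{\Lambda})$-orbits of lines $\wt{\Lambda}\cap\wt{X}_{\wt{w}}$, applies Lemma \ref{lemma:disjointWallOrbit} at each to get perpendiculars $h_i\in G_{\wt{v}_i}$, relates them by Lemma \ref{lemma:WallOrbits} via $h_1^{p_i}\wt{\Lambda}=h_i^{q_i}\wt{\Lambda}$, and sets $h=h_1^{p_1\cdots p_n}$, so that $A=\{h^r\wt{\Lambda}\}_{r\in\mathbb{Z}}$ coincides with $\{h_i^{\hat{q}_ir}\wt{\Lambda}\}_{r\in\mathbb{Z}}$ for \emph{every} $i$ simultaneously (a common refinement). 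Then any vertex space met by $\wt{\Lambda}$ is $f\wt{v}_i$ for some $f\in\stab(\wt{\Lambda})$, and $A=\{(fh_i^{\hat{q}_i}f^{-1})^r\wt{\Lambda}\}_{r\in\mathbb{Z}}$ with $fh_i^{\hat{q}_i}f^{-1}\in G_{f\wt{v}_i}$ perpendicular, which is (\ref{part:4}). Two secondary remarks: first, your ``element-wise fixing'' upgrade rests on the statement of Lemma \ref{lemma:disjointWallOrbit} as printed, whereas its proof actually gives $gh^{\pm 1}g^{-1}\wt{\Lambda}=h\wt{\Lambda}$; only set-preservation $gA=A$ survives the sign, but that is all you use. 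Second, your one-line deduction of (\ref{part:3}) from (\ref{part:4}) needs an extra step: disjoint progressions of \emph{unequal} spacing can partition a $\mathbb{Z}$-family into infinitely many pieces, so you must first use (\ref{part:1}) to see that parts meeting $\wt{X}_{\wt{v}}$ in walls of one $G_{\wt{v}}$-orbit are $G_{\wt{v}}$-translates $uA$ of a single part, hence (since $G_{\wt{v}}$ is abelian) pencils with the \emph{same} generator, of which only finitely many can be disjoint; this is in substance the paper's argument for (\ref{part:3}).
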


   \begin{proof}
    Let $\wt{\Lambda}$ be a horizontal wall in $\mathcal{W}$.
    There are finitely many $\stab(\wt{\Lambda})$-orbits of lines of the form $\wt{\Lambda} \cap \wt{X}_{\wt{w}}$.
    Let $\{\ell_i \}^{n}_{i=1}$ be a set of representatives for these orbits and let $\{\wt{v}_i\}_{i=1}^n$ be vertices in $\wt{\Gamma}$ such that $\ell_i = \wt{\Lambda} \cap \wt{X}_{\wt{v}_i}$.
    By Proposition \ref{lemma:disjointWallOrbit}, for each $i$ there exist $h_i \in G_{\wt{v}_i}$ perpendicular to $\ell_i$ such that $h_i \wt{\Lambda} = gh_ig^{-1}\wt{\Lambda} $ for all $g \in \stab(\wt{\Lambda})$, and each $A_i = \{ h_i^r \wt{\Lambda} \}_{r\in \mathbb{Z}}$ is a collection of pairwise disjoint walls.
    Let $\mathcal{A}_i = \{ gA_i \mid g \in G\}$.
    If $A_i \cap gA_i \neq \emptyset$ then there exist $r,s$ such that $h^s \wt{\Lambda} = g h^r \wt{\Lambda}$ which implies $h^{-s} g h^r \in \stab(\wt{\Lambda})$.
    Therefore, for all $t \in \mathbb{Z}$
    \[
     h^{-s}gh^{t+r} \wt{\Lambda} = ( h^{-s} g h^r) h^t (h^{-s} g h^r)^{-1} \wt{\Lambda} = h^{\pm t}\wt{\Lambda},
    \]

    \noindent so $gh^{t + r} \wt{\Lambda} = h^{\pm t+s}\wt{\Lambda}$, and so $gA_i = A_i$.
    Therefore each $\mathcal{A}_i$ is a partition of the $G$-orbit of $\wt{\Lambda}$ satisfying (\ref{part:1}) and (\ref{part:2}).

    By Lemma \ref{lemma:WallOrbits}, there exists $p_i, q_i$ such that $h_1^{p_i} \wt{\Lambda} = h_i^{q_i}\wt{\Lambda}$.
    Let $p = p_1 \cdots p_n$, and $\hat{q}_i = p_1, \cdots p_{i-1} q_i p_{i+1} \cdots p_n$.
    Let $h = h_1^p$.
    Let $A = \{h^r \wt{\Lambda} \}_{r\in \mathbb{Z}}$ and $\mathcal{A} = \{gA \mid g \in G \}$.
    Note that $\mathcal{A}$ is a common refinement of the partitions $\mathcal{A}_i$, still satisfying (\ref{part:1}) and (\ref{part:2}) since $\{h_i^r \wt{\Lambda} \}_{r\in \mathbb{Z}} \supseteq \{ h_i^{\hat{q}_i r} \wt{\Lambda} \}_{r\in\mathbb{Z}} = \{h^r \wt{\Lambda}\}_{r\in \mathbb{Z}}$.
    Condition (\ref{part:4}) holds since if $\wt{\Lambda}$ intersects $\wt{X}_{\wt{w}}$ then there exists $i$ and $f \in \stab(\wt{\Lambda})$ such that $\wt{w} = f \wt{v}_i$ so $A = \{ f h_i^{\widehat{q}_ir} f^{-1} \wt{\Lambda} \}$.

    Suppose that $\wt{X}_{\wt{w}}$ is intersected by some infinite set $\{ g_j A\}_{j=1}^{\infty}$.
    Then there exists an $i$ such that $g_j \wt{\Lambda} \cap \wt{X}_{\wt{w}} = g_j \ell_i$ for infinitely many $j$.
    Fix one such $g_k$, then  there is an infinite set $\{g_k^{-1}g_jA\}$ such that all the walls in $g_k^{-1}g_jA$ intersect $\wt{X}_{g_k^{-1}\wt{w}}$.
    By our choice of $g_k$, there exists $f \in \stab(\wt{\Lambda})$ such that $\wt{\Lambda} \cap \wt{X}_{g_k^{-1}\wt{w}} = f\ell_i$.
    But since $A = \{ h^r\wt{\Lambda} \}_{r \in \mathbb{Z}} = \{h_i^{\widehat{q}_i r} \wt{\Lambda} \}_{r\in\mathbb{Z}} = \{fh_i^{ \widehat{q}_i r}f^{-1} \wt{\Lambda} \}_{r\in\mathbb{Z}}$, the $G_{g_k^{-1}\wt{w}}$-orbit of $f \ell_i$ is contained in finitely many $G$-translates of $A$.
    This contradicts that there is an infinite set $\{g_k^{-1}g_jA\}$ such that all the walls in $g_k^{-1}g_jA$ intersect $\wt{X}_{g_k^{-1}\wt{w}}$, so (\ref{part:3}) holds.
 \end{proof}

\section{Computing the Dilation Function}

In this section the results of Section \ref{Contracting} are applied to concrete examples to determine whether or not they are finite dimensional.
In this section we assume that all immersed walls are non-trivial, in the sense that they do not consist of a single immersed circle.
This allows us to identify $\stab(\wt{\Lambda})$ with $\pi_1 \Lambda$.
The main focus will be on computing the dilation function $R: \pi_1 \Lambda \rightarrow \mathbb{Q}^*$ of an immersed wall.
Let $q:\Lambda \rightarrow \Omega$ be the quotient map obtained by quotienting each circle in the equitable set to a vertex.
This simplifies the computation since $R$ factors through $q_{*}: \pi_1\Lambda \rightarrow \pi_1\Omega$.

\begin{displaymath}
 \xymatrix{ \pi_1 \Lambda \ar[r]^R \ar[d]_{q_{*}} & \mathbb{Q}^* \\
            \pi_1 \Omega \ar[ur]   & \\
 }
\end{displaymath}

Regard $\Omega$ as a directed graph by fixing orientations of its edges $E(\Omega)$, choosing the orientation of each edge $\sigma$ to be consistent with all other edges mapping into the same vertex space of $X$.
Define a \emph{weighting} $\omega: E(\Omega) \rightarrow \mathbb{Q}^*$ as follows: for each directed edge $\sigma \in E(\Omega)$ let $\omega(\sigma) = \frac{\#[C_{\iota}: \ola{f}_e]}{\#[C_{\tau}: \ora{f}_e]}$,
where $X_{e}$ is the edge space $\sigma$ maps into, and $C_{\iota}, C_{\tau}$ are the elements in the equitable set attached to the initial and terminal ends of $\sigma$.
Let $\gamma$ be a combinatorial path in $\Lambda$ representing an element $[\gamma] \in \pi_1\Lambda$, and let $\overline{\gamma}$ be the combinatorial path in $\Omega$ obtained by quotienting the circle-edges of $\gamma$ to vertices, then $q \circ \gamma$ is homotopic to $\overline{\gamma}$.
If $\overline{\gamma} = \sigma_1^{\epsilon_1} \cdots \sigma_n^{\epsilon_n}$ with $\epsilon \in \{\pm1\}$, then $R([\gamma]) = \omega(\sigma_1)^{\epsilon_1} \cdots \omega(\sigma_n)^{\epsilon_n}$.
Note that this does not depend on the choice of representative $\gamma$. 
See Figure \ref{fig:schematicArcChoiceDependence} for examples of this quotient and edge weightings.

\begin{exmp} \label{exmp:DirectInfiniteDimensional}
 Let $G = \la a,b,s,t \mid [a,b], s^{-1}abs = a^2, t^{-1}abt = b^2 \ra$.
 The group $G$ is a free-by-cyclic tubular group that acts freely on a CAT(0) cube complex, but cannot act on a finite dimensional CAT(0) cube complex.
 The proof uses the dilation function.

 The equitable set $\{a, b \}$ demonstrates that $G$ acts freely on a CAT(0) cube complex.
 Any collection of circles parallel to $ab^{-1}$ extends to an immersed wall $\wt{\Lambda}$, which must be non-dilated since $\wt{\Lambda}$ cannot regularly intersect its translates.
 We claim that any immersed wall containing a circle not parallel to $ab^{-1}$ is dilated.
 Such an immersed wall must exist since an equitable set for $G$ must generate a finite index subgroup of the vertex group.
 Suppose there is an immersed wall $\Lambda$ with equitable set $S = \{{v}_1, \ldots, {v}_n \}$, where ${v}_i = a^{x_i}b^{y_i}$ where we assume $v_1 \neq a^nb^{-n}$.

 The equitable set $S$ must satisfy the equations
 \[
  2 \sum_{i=1}^n | x_i | = 2 \sum_{i=1}^n | y_i | = \sum^n_{i=1} | x_i - y_i|. \label{eq:equitable} \tag{$*$}
 \]

 The claim follows by finding a closed path $\gamma$ in $\Omega$ such that $R([\gamma]) \neq \pm1$.
 Direct the edges of $\Omega$ such that edges exiting the vertex space via the attaching maps $ab$ are the initial ends.
 Therefore the number of edges leaving the vertex corresponding to ${v}_i$ is $2|x_i - y_i|$, while the number of vertices arriving at that vertex is $2|x_i| + 2|y_i|$.
 If some $|x_i | + |y_i | > |x_i - y_i |$, then since each $|x_i|+ |y_i| \geq |x_i - y_i|$ we have

 \[
  \sum_{i=1}^n |x_i | + |y_i | > \sum_{i=1}^n |x_i - y_i |,
 \]

 \noindent which would contradict \eqref{eq:equitable}.
 Therefore the number of edges entering and exiting each vertex in $\Omega$ are equal, so there exists a \emph{directed Eulerian trail} $\gamma$, which is a cycle traversing each edge in $\Omega$ precisely once.
 Therefore

 \[
  |R([\gamma])| = \prod^n_{i=1} \frac{| x_i - y_i |^{2| x_i - y_i |}}{ (2| x_i |)^{2| x_i |} (2| y_i | )^{2 | y_i |}},
 \]

 \noindent and the claim will be proven by showing that $|R([\gamma])| <1$.
 Considering each factor in the product separately we want to show that
 \[
  \frac{|x-y|^{2|x-y|}}{(2|x|)^{2|x|}(2|y|)^{2|y|}} \leq 1
 \]
 with equality only when $x= -y$.
 Since $v_1 \neq a^{n}b^{-n}$, the first term in the product is strictly less than one, so $|R([\gamma])| < 1$.

 This is immediate in the trivial case when $|x-y|=0, |x|=0,$ or $|y|=0$.
 After taking the square root and applying the logarithm, the non-trivial case is equivalent to showing that
 \[
  |x-y|\log|x-y| \leq |x|\log(2|x|) + |y|\log(2|y|), \label{eq:convex} \tag{$**$}
 \]
 \noindent with equality only when $x = -y$.
 As $z \log(z)$ is strictly convex for $z>0$, the following inequality holds with equality when $p = q$;
 \[
  \frac{|p+q|}{2}\log\Big(\frac{|p+q|}{2}\Big) \leq \frac{|p|}{2} \log{|p|} + \frac{|q|}{2} \log{|q|}.
 \]
 \noindent Thus \eqref{eq:convex} holds by letting $p = 2x$ and $q = -2y$.
\end{exmp}

\begin{exmp} \label{RAAGexample}

 The Right Angled Artin Group $A =\langle a,b,c,d \mid [a,b] = [b,c] = [c,d] = 1 \rangle$ is not subgroup separable (see \cite{NibloWiseNonEngulfing}).
 From the alternative presentation $A = \langle a_i, b_i \mid  [a_i, b_i], a_1 = a_2, b_2 = b_3 ;1 \leq i \leq 3 \rangle$ it is clear that $A$ is fundamental group of a tubular group given by three tori and two cylinders.
 Consider an immersed wall with the following equitable set: $\{a_1b_1^2, b_1\}, \{a_2b_2^{-1}, a_2b_2^2 \}$ and $\{ a_3^2 \}$, and the arcs given in Figure \ref{fig:SchematicDiagram}.
 The diagram shows the underlying graph $\Omega$ with the associated edge weightings.
 There is a single simple closed path with $R([\gamma]) = (2) \cdot (1/2) \cdot (1/2)^{-1} \cdot (1) = 2$, therefore $\wt{\Lambda}$ is dilated and $\stab(\wt{\Lambda})$ is not separable (see discussion in Section \ref{introduction}).
 Furthermore by Theorem \ref{theorem:MainA} there must exist an infinite cube in the associated cubulation.

  \begin{figure}
 \includegraphics[scale=0.15]{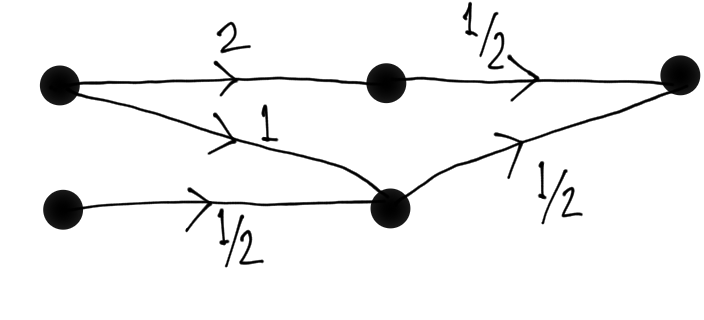}
  \caption[.]{The graph $\Omega$ from Example \ref{RAAGexample}. }
  \label{fig:SchematicDiagram}
\end{figure}
\end{exmp}

\begin{exmp} \label{arcChoiceDependent}
 The group $G = \langle a,b,t \mid [a,b], t^{-1}at = b \rangle$ from Example \ref{example1} demonstrates that the choice of arcs connecting circles can change whether or not a cubulation is finite dimensional.
 Consider the equitable set $\{a^2b, ab^2 \}$ which extends to both a dilating wall and a non-dilating wall shown in Figure \ref{fig:schematicArcChoiceDependence}.

\begin{figure}
 \includegraphics[scale=0.15]{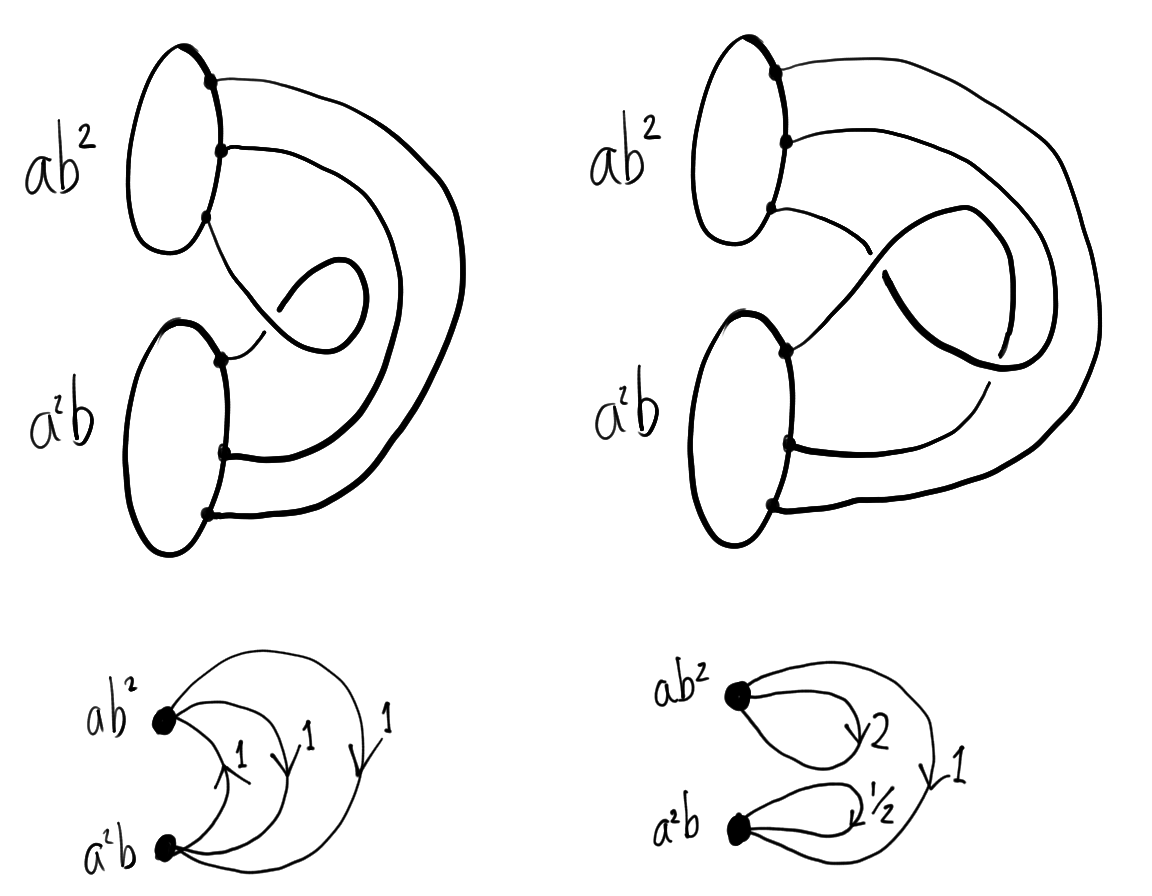}
 \caption{Example \ref{arcChoiceDependent}. The immersed walls $\Lambda_1, \Lambda_2$ are shown on the top, and the associated quotient $\Omega_i$ is shown below with its orientation and weighting. The immersed wall on the left is non-dilated and the immersed wall on the right is dilated.}
 \label{fig:schematicArcChoiceDependence}
\end{figure}

\end{exmp}

  \section{Dilated Walls are not Quasi-Isometrically Embedded} \label{QIEWalls}

  \noindent Using Theorem \ref{theorem:MainA}, Theorem \ref{mainD} can be restated in the following form.

  \begin{thm} \label{nonQIembed}
   If $\wt{\Lambda}$ is dilated then $\stab(\wt{\Lambda}) \leqslant \pi_1 X$ is not quasi-isometrically embedded.
  \end{thm}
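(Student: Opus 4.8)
The plan is to translate the statement into coarse geometry and then exhibit explicit distortion produced by the dilation. Since $X$ is compact, $G=\pi_1X$ acts geometrically on $\wt{X}$, and since the immersed wall $\Lambda$ is compact, $\stab(\wt{\Lambda})\cong\pi_1\Lambda$ acts geometrically on the tree $\wt{\Lambda}$ equipped with its intrinsic path metric. By the Svarc--Milnor lemma it therefore suffices to show that the inclusion $\wt{\Lambda}\hookrightarrow\wt{X}$ fails to be a quasi-isometric embedding; that is, to produce points $p_n,q_n\in\wt{\Lambda}$ with $\dist_{\wt{\Lambda}}(p_n,q_n)\to\infty$ while the ratio $\dist_{\wt{\Lambda}}(p_n,q_n)/\dist_{\wt{X}}(p_n,q_n)\to\infty$.

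The source of distortion is the following local picture. The wall meets each vertex space $\wt{X}_{\wt{v}}$ in a single line, because $\stab(\wt{\Lambda})\cap G_{\wt{v}}$ is exactly the cyclic group translating along that line, and it can pass to an adjacent vertex space only through an edge space, which is glued along an entire attaching line in $\wt{X}_{\wt{v}}$. Consequently a path inside $\wt{\Lambda}$ is forced to travel along its line all the way to the point where that line crosses the attaching line, whereas a path in $\wt{X}$ is free to cut perpendicularly across the flat to the nearest point of the attaching line and cross there. I would exploit the dilation to make the first distance much larger than the second. Fix a dilated element $g\in\stab(\wt{\Lambda})$ with $R(g)>1$ and a perpendicular $h\in G_{\wt{v}_0}$, so that by Lemma~\ref{lemma:WallOrbits} and the homomorphism property of Lemma~\ref{lemma:homomorphismR} one has $g^n h^{\ora{m}^n} g^{-n}\wt{\Lambda}=h^{\ola{m}^n}\wt{\Lambda}$ with $\ola{m}>\ora{m}$. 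Thus the family of parallel copies $\{h^r\wt{\Lambda}\}$ is compressed by the factor $R(g)^{-1}$ at each step from $\wt{X}_{\wt{v}_0}$ toward $\wt{X}_{g^n\wt{v}_0}$, exactly as in the proof of Proposition~\ref{prop:dilatedInfinite}. Tracking the crossing points along the axis of $g$, this compression forces the along-line distances that $\wt{\Lambda}$ must cover to grow, while the perpendicular shortcuts to the successive attaching lines shrink geometrically; choosing $p_n$ near $\wt{X}_{\wt{v}_0}$ and $q_n$ deep in the $g$--direction then produces the required pair.

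Concretely I would build the short extrinsic path by concatenating the perpendicular segments to the attaching lines supplied by the compression, using Lemma~\ref{lemma:CyclicMatching} to compute the relevant intersection numbers and hence the segment lengths, and then bound its total length from above. The intrinsic length of any competing path in $\wt{\Lambda}$ is bounded below by the $g$--shift bookkeeping of Lemma~\ref{lemma:WallOrbits}: making perpendicular progress relative to the flats requires traversing the dilating cycle, so $\dist_{\wt{\Lambda}}(p_n,q_n)$ is controlled from below by the accumulated shift exponents $\ola{m}^{\,i}\ora{m}^{\,n-i}$. The main obstacle is reconciling these two estimates: the extrinsic shortcut incurs additive edge- and arc-contributions at every vertex space it meets, and one must verify that the multiplicative compression coming from $R(g)>1$ still dominates these additive terms, so that the ratio of the two lengths is genuinely unbounded; this is precisely where the quadratic distortion advertised in the introduction should emerge. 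As a consistency check on the final form, the statement is exactly the contrapositive of Theorem~\ref{mainD} read through the equivalence furnished by Theorem~\ref{theorem:MainA}, since a quasi-isometrically embedded $\wt{\Lambda}$ would have all its $G$--translates uniformly quasi-isometrically embedded and hence, by Theorem~\ref{mainD}, force $C(\wt{X},\mathcal{W})$ to be finite dimensional, contradicting that a dilated wall yields an infinite cube.
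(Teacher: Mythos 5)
Your reduction via Svarc--Milnor and your local picture (paths in $\wt{\Lambda}$ must run along their line to the crossing point with an attaching line, while paths in $\wt{X}$ may cut straight across the flat) are both correct and match the paper's starting point. But the global mechanism you propose cannot produce the distortion. You take $p_n$ near $\wt{X}_{\wt{v}_0}$ and $q_n$ ``deep in the $g$-direction,'' i.e.\ essentially $q_n \approx g^n p_n$ for a fixed dilated $g \in \stab(\wt{\Lambda})$. Since $g$ acts by isometries both on $\wt{X}$ and on $\wt{\Lambda}$ with its intrinsic path metric, the triangle inequality gives $\dist_{\wt{\Lambda}}(p, g^n p) \leq n\, \dist_{\wt{\Lambda}}(p, gp)$, so \emph{both} distances grow at most linearly in $n$ and their ratio stays bounded: no pair of points drifting along the axis of a single element can witness failure of quasi-isometric embedding. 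Relatedly, the compression relation $h^{\ola{m}}\wt{\Lambda} = g h^{\ora{m}} g^{-1}\wt{\Lambda}$ is a statement about the \emph{family of translates} of $\wt{\Lambda}$; in the paper it yields crossing walls (Proposition \ref{prop:dilatedInfinite}), not intrinsic distortion of a single wall, and you never supply the bridge between the two. The step you flag as ``the main obstacle'' -- reconciling the intrinsic lower bound with the extrinsic upper bound -- is not a technical verification left to the reader; it is the entire content of the theorem, and your framework has no route to it.

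What the paper does instead is qualitatively different. The distorted elements of $\pi_1\Lambda$ are not powers of a fixed element but conjugates $\rho_m = \gamma_m \cdot a_0 c_0 a_0^{-1} \cdot \gamma_m^{-1}$ of a fixed loop by longer and longer \emph{spiraling} paths $\gamma_m$, whose winding numbers $s(mn+i)$ around the circles of $\Lambda$ are calibrated (via the rule of sines inequalities \eqref{eq:lowerbound}--\eqref{eq:upperbound}) so that the immersed length of $\gamma_m$ in $\Lambda$ grows quadratically, while a two-step homotopy pushes $\rho_m$ to a path $\rho_m'$ in $X$ of linear length. Two further ingredients are entirely absent from your sketch: first, the construction needs at least one \emph{diagonal} crossing of a vertex space, and proving that dilation forces this requires Haglund's theorem that a group acting freely on a CAT(0) cube complex contains no subgroup $\la r,t \mid t^{-1}r^n t = r^m \ra$ with $n \neq \pm m$; second, one must rule out that the spiral closes up, which is exactly what Example \ref{spiralExmp} shows is delicate (a non-dilated wall can still spiral and be quadratically distorted, so the implication you are after is not an equivalence). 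Finally, your ``consistency check'' is circular: in the paper, Theorem \ref{mainD} \emph{is} Theorem \ref{nonQIembed}, restated through Theorem \ref{theorem:MainA}; there is no independent proof of Theorem \ref{mainD} available to quote, so it cannot serve even as corroboration, let alone as a fallback argument.
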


  \noindent The following example motivates the proof of Theorem \ref{nonQIembed}, and illustrates that a non-dilated wall can fail to be quasi-isometrically embedded.

  \begin{exmp} \label{spiralExmp}
   Consider the tubular group $G = \langle a,b,t \mid [a,b], t^{-1} a t = b \rangle$ from Example \ref{example1}.
   Let $\Lambda$ be the immersed wall given by the equitable set $\{ a^{-1}b \}$
   which intersects each end of the edge space exactly once (see Figure \ref{fig:spiralSpace}).
   Observe that $\Lambda$ is embedded, and therefore non-dilated.
   Let $H = \stab(\wt{\Lambda}) \leqslant G$.
   Note that $H$ is a rank $2$ free group, with basis $\{a^{-1}b, t\}$.
   We will show that $H$ is quadratically distorted in $G$.

   \begin{figure}
 \includegraphics[scale=0.14,keepaspectratio=true]{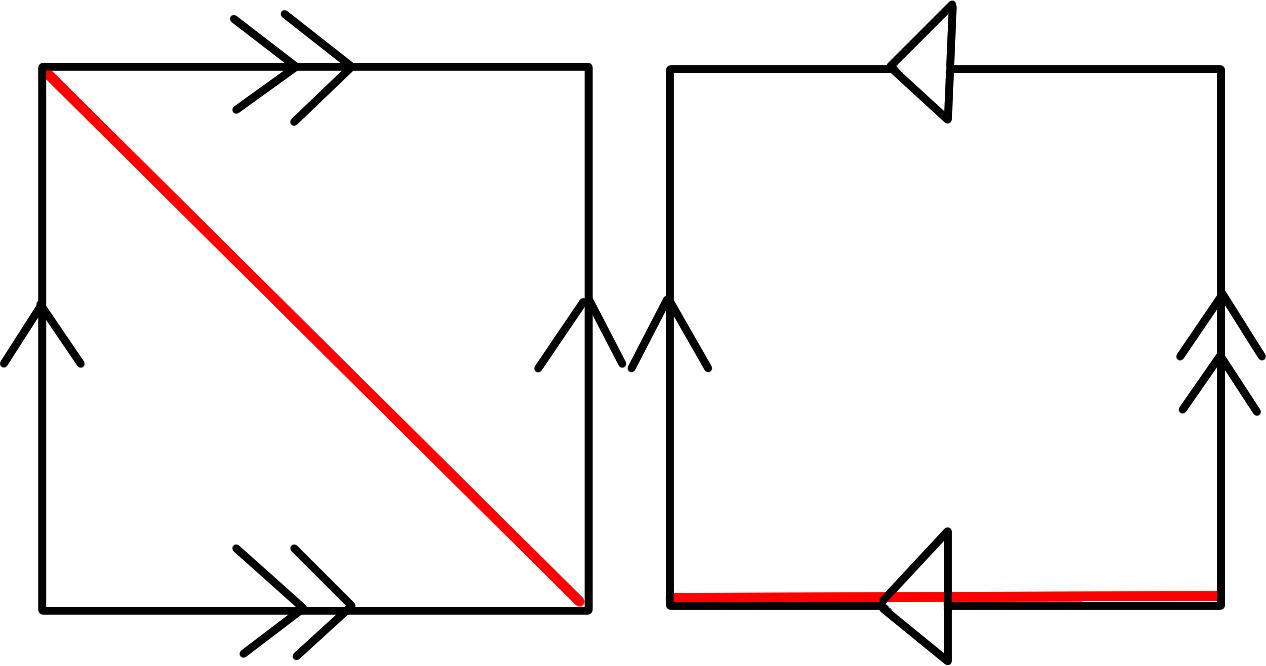}
 \caption{The tubular space and immersed wall in Example \ref{spiralExmp}}
 \label{fig:spiralSpace}
\end{figure}

    Define $\alpha_0 = 1$ and for each $n \geq 1$ define $\alpha_n = t(a^{-1}b)^n \alpha_{n-1}$. This sequence of elements has the property that $|\alpha_n |_{_{H}} = \sum^n_{i=1} (i+1) = \frac{1}{2} n(n+1) + n$.
    The element $\beta_n = a^n(ta^{-1})^n \in G$ satisfies $|\beta|_{_G} \leq 3n$.
    It can be verified by induction that $\alpha_n = \beta_n$.
    Therefore $H$ is a quadratically distorted subgroup of $G$.
    Figure \ref{fig:Spiral} illustrates the paths these elements correspond to in the universal cover: $\alpha_n$ spirals outwards increasing in length quadratically, while $\beta_n$ cuts through inside, increasing linearly in length before traveling out by another linear factor.

    \begin{figure}
 \includegraphics[scale=0.2,keepaspectratio=true]{./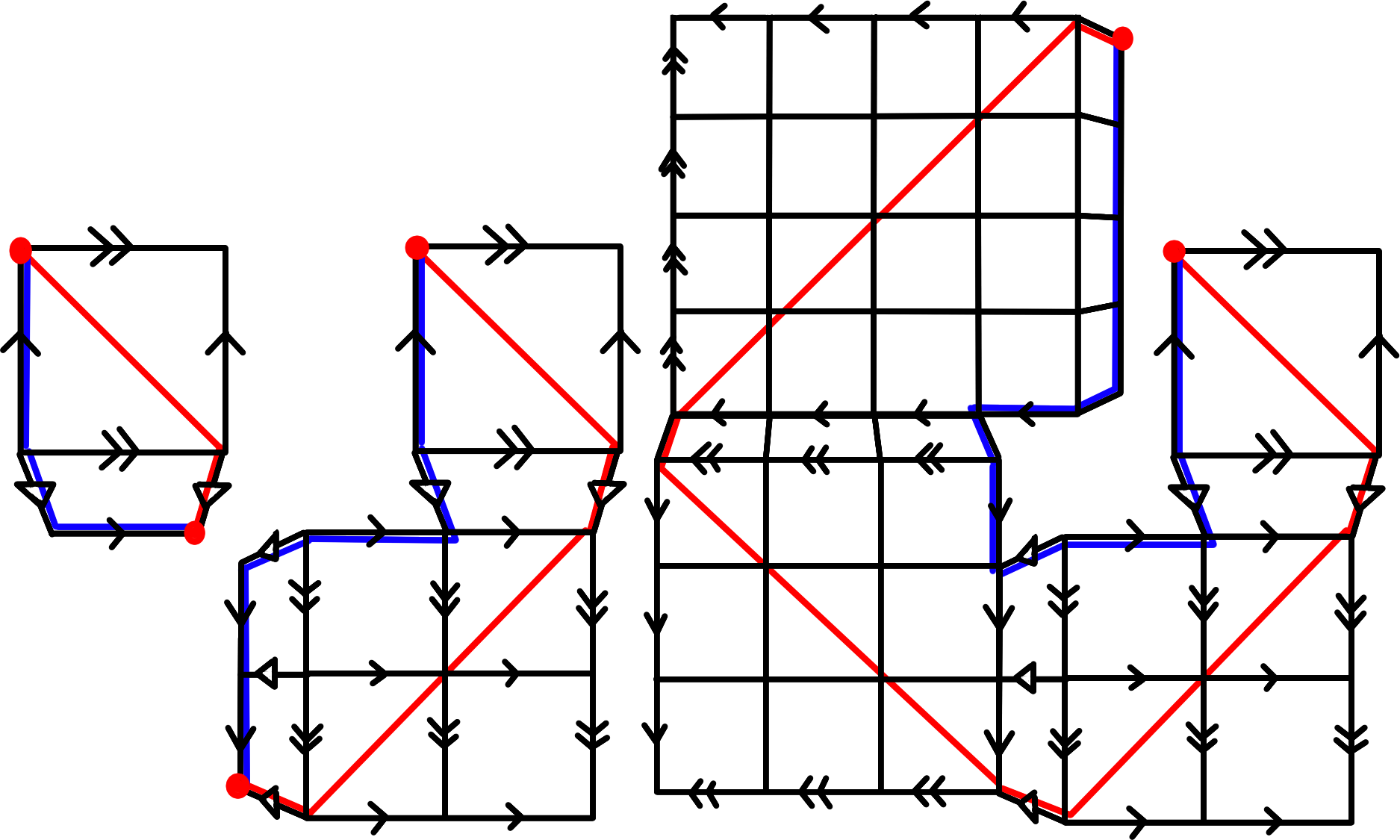}
 \caption{The paths corresponding to $\alpha_1$, $\beta_1$, and $\alpha_2$, $\beta_2$, and $\alpha_4$, $\beta_4$ }
 \label{fig:Spiral}
\end{figure}
  \end{exmp}

   Before embarking on the proof of Theorem \ref{nonQIembed} we outline the strategy employed.

  \begin{enumerate}
   \item \label{step1} Inspired by the geometric interpretation of Example \ref{spiralExmp}, construct a sequence of spiraling paths $\gamma_m$ in $\Lambda$ determined by a sequence of parameters.
   \item \label{step2} Choose the parameters of $\gamma_m$ such that the length of $\gamma_m$ in $\Lambda$ is a quadratic function of $m$.
   \item \label{step3} Double up $\gamma_m$, as illustrated in Figure \ref{fig:DoubleSpiral}, to produce a second sequence of paths $\rho_m$ that spiral out before spiraling back in, with the length growing quadratically in $\Lambda$.
   This stage is necessary because in Example \ref{spiralExmp} the rate at which $\gamma_m$ spiraled outwards was linear.
   \item \label{step4} Specify a path homotopy from $\rho_m$ to $\rho'_m$, where the length of $\rho'_m$ in $X$ is a linear function of $m$.
  \end{enumerate}

  \begin{proof}[Proof of Theorem \ref{nonQIembed}]

   Let $\Lambda$ be a dilated immersed wall in $X$. Let $z \in \pi_1 \Lambda$ satisfy $|R(z)| \neq 1$, and let $\gamma$ be a closed immersed path in $\Lambda$ representing $z$.

      \underline{\textbf{Stage \ref{step1}:}} Decompose $\gamma$ as
   \[
    \gamma = x_0 \cdot y_1 \cdot x_1 \cdot y_2 \cdots x_{\ell -1} \cdot y_{\ell}
   \]
   \noindent where each $x_i$ is a path in a circle of $\Lambda$ immersed in a vertex space $X_{u_i}$, and $y_i$ is an arc of $\Lambda$ embedded in an edge space $X_{e_i}$.
   As $\gamma$ is a closed path, the terminal point of $y_{\ell}$ is the initial point of $x_0$.
   The subpath $x_i$ is \emph{diagonal} if $\ora{X}_{e_{i}}$ and $\ola{X}_{e_{i+1}}$ are not parallel in $X_{u_i}$, and the subpath $y_i \cdot x_{i} \cdots x_{i+r} \cdot y_{i+r+1}$ is \emph{straight} if $x_{i}, x_{i+1}, \ldots, x_{i+r}$ are not diagonal.
   The subscripts are considered modulo $\ell$ so that subpaths of $\gamma$ containing the initial point are considered.
   Thus, after cyclically parameterizing, $\gamma$ decomposes as
   \[
    \gamma = a_0 \cdot b_1 \cdot a_1 \cdot b_2 \cdots a_{n-1} \cdot b_{n}
   \]
   \noindent where each $a_i$ is diagonal and each $b_i$ is straight.
   For notational purposes later these subscripts are considered modulo $n$.
   \begin{claim}
   There is at least one diagonal subpath in $\gamma$.
   \end{claim}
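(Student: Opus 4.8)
The plan is to argue by contradiction: I would assume that $\gamma$ contains no diagonal subpath, so that every $x_i$ is non-diagonal, and then derive that $|R(z)| = 1$, contradicting the choice $|R(z)| \neq 1$.

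The first step is to express $R(z)$ as a product of one local factor per vertex subpath. Lifting $\gamma$ to a path in $\wt{\Lambda}$ through consecutive vertex spaces $\wt{X}_{\wt{v}_0}, \ldots, \wt{X}_{\wt{v}_{\ell}}$ with $\wt{v}_{\ell} = z\wt{v}_0$ and applying Lemma \ref{lemma:WallOrbits} exactly as in the well-definedness argument of Lemma \ref{lemma:homomorphismR}, I would group the resulting ratios by vertices. The two perpendicular end-terms at $\wt{v}_0$ and $\wt{v}_{\ell}$ cancel because $\rho_{\ell}$ and the outgoing data at $\wt{v}_{\ell}$ are the $z$-conjugates of those at $\wt{v}_0$, leaving the closed-loop product
\[
 R(z) = \prod_{i=0}^{\ell-1} \frac{\#[\rho_i, q_i^{\mathrm{out}}]}{\#[\rho_i, q_i^{\mathrm{in}}]},
\]
where $\rho_i$ generates $\stab(\wt{\Lambda}) \cap G_{\wt{v}_i}$ and $q_i^{\mathrm{in}}, q_i^{\mathrm{out}} \in \pi_1 X_{u_i}$ are the classes of the attaching circles of the incoming and outgoing edge spaces at $x_i$.

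The decisive point, and where the hypothesis enters, is the following. Since $x_i$ is non-diagonal, $q_i^{\mathrm{in}}$ and $q_i^{\mathrm{out}}$ are parallel, hence both are multiples of a single primitive class $d_i \in \pi_1 X_{u_i}$; write $q_i^{\mathrm{in}} = m_i^{\mathrm{in}} d_i$ and $q_i^{\mathrm{out}} = m_i^{\mathrm{out}} d_i$. Then each factor collapses to $m_i^{\mathrm{out}}/m_i^{\mathrm{in}}$, independent of $\rho_i$, so $R(z) = \prod_i m_i^{\mathrm{out}}/m_i^{\mathrm{in}}$. Because the incoming and outgoing directions at $u_i$ share the \emph{same} primitive $d_i$, a single quantity $D_i := \#[d_i, S_{u_i}]$ controls both ends. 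The equitable condition for the edge space joining $u_i$ to $u_{i+1}$ reads $\#[q_i^{\mathrm{out}}, S_{u_i}] = \#[q_{i+1}^{\mathrm{in}}, S_{u_{i+1}}]$, which upon factoring out the degrees becomes
\[
 m_i^{\mathrm{out}} D_i = m_{i+1}^{\mathrm{in}} D_{i+1}.
\]
Finally I would telescope: reindexing the denominator cyclically and substituting this relation,
\[
 R(z) = \frac{\prod_i m_i^{\mathrm{out}}}{\prod_i m_i^{\mathrm{in}}} = \prod_i \frac{m_i^{\mathrm{out}}}{m_{i+1}^{\mathrm{in}}} = \prod_i \frac{D_{i+1}}{D_i} = 1,
\]
the last product being cyclic. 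Thus $|R(z)| = 1$, contradicting $|R(z)| \neq 1$, and $\gamma$ must contain a diagonal subpath.

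I expect the main obstacle to be the middle step: recognising that ``non-diagonal'' is precisely the condition forcing the incoming and outgoing edges at a vertex to point along a common primitive direction, so that one $D_i$ governs both ends of each edge and the equitable balance telescopes. The local geometry alone does \emph{not} make each factor $m_i^{\mathrm{out}}/m_i^{\mathrm{in}}$ trivial — indeed attaching maps of differing degree occur routinely — so the cancellation is genuinely a \emph{global} consequence of the equitable identity, not a pointwise one. A secondary technical check is the bookkeeping of edge orientations: the equitable identity $\#[\ola{f}_e, S_{\ola{e}}] = \#[\ora{f}_e, S_{\ora{e}}]$ is symmetric under reversing an edge, so it applies regardless of the direction in which $\gamma$ traverses each edge space, and I would verify this is compatible with the cyclic reindexing above.
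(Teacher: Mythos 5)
Your proof is correct, but it diverges from the paper's at the decisive step. Both arguments share the same skeleton: lift $\gamma$, apply Lemma~\ref{lemma:WallOrbits} to write $R(z)$ as a cyclic product of ratios $\#[\rho_i, q_i^{\mathrm{out}}]/\#[\rho_i, q_i^{\mathrm{in}}]$ with the perpendicular end-terms cancelling by $z$-conjugation, and use non-diagonality to collapse each factor to a ratio of powers $m_i^{\mathrm{out}}/m_i^{\mathrm{in}}$ of a common primitive $d_i$ (in the paper's notation, $g_i = h_i^{p_i}$, $g_{i+1} = h_i^{q_i}$). Where you part ways is in showing this product of ratios is $\pm 1$: the paper observes that the chain of relations $h_{i}^{q_i} = h_{i+1}^{p_{i+1}}$ closes up into a Baumslag--Solitar relation $z^{-1}h_0^{Q}z = h_0^{P}$ with $Q/P = \prod q_i / \prod p_i$, and then invokes Haglund's theorem (cited as \cite{HaglundSemiSimple}) that a group acting freely on a CAT(0) cube complex contains no subgroup $\la r,t \mid t^{-1}r^ntr^m\ra$ with $n \neq \pm m$; you instead invoke the equitable identity $\#[q_i^{\mathrm{out}}, S_{u_i}] = \#[q_{i+1}^{\mathrm{in}}, S_{u_{i+1}}]$, which under the parallelism hypothesis factors as $|m_i^{\mathrm{out}}|D_i = |m_{i+1}^{\mathrm{in}}|D_{i+1}$ and telescopes around the loop (the $D_i = \#[d_i, S_{u_i}]$ are nonzero because each $S_{u_i}$ generates a finite-index subgroup of $\pi_1X_{u_i}$). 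Your route is more self-contained: it stays entirely within the combinatorics of the equitable set and needs no external input, and it makes visible that the equitable balance condition is exactly what forces the cyclic product to equal $1$. The paper's route buys something different: the vanishing of dilation along non-diagonal loops is exhibited as a purely group-theoretic consequence of $G$ acting freely on a CAT(0) cube complex, independent of which equitable set produced the walls. One caveat applying equally to both arguments: Lemma~\ref{lemma:WallOrbits} is stated for the geodesic in $\wt{\Gamma}$ between $\wt{u}_0$ and $z\wt{u}_0$, whereas the lift of $\gamma$ may backtrack; at a vertex visited several times one needs transitivity of parallelism across the successive visits to conclude that the two \emph{geodesic} edges at that vertex are attached along parallel circles. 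The paper elides this point exactly as you do, so it is not a gap relative to the paper's standard of rigor.
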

   \begin{proof}
   Let $\wt{\Lambda}$ be a wall in $\wt{X}$ stabilized by $\pi_1 \Lambda$, and $\wt{X}_{\wt{u}_0}$be a vertex space intersected by $\wt{\Lambda}$.
   Let $\wt{u}_1, \ldots, \wt{u}_{\ell-1}$ be the vertices in $\wt{\Gamma}$ on the geodesic between $\wt{u}_0$ and $\wt{u}_{\ell} = z\wt{u}_0$.
   Let $\wt{e}_i$ be the edge between $\wt{u}_{i-1}$ and $\wt{u}_i$.
   Let $\la g_i \ra = G_{\wt{e}_i}$ for $1 \leq i \leq \ell$, let $g_0 \in G_{\wt{u}_i}$ be perpendicular to $\wt{\Lambda} \cap \wt{X}_{\wt{u}_0}$ and $g_{\ell+1} = zg_0z^{-1}$.
   Let $\la \rho_i \ra = \stab(\wt{\Lambda}) \cap G_{\wt{u}_i}$ for $0 \leq i \leq \ell$.
   Suppose that the claim is false.
   This would imply that exists a primitive $h_i \in G_{\wt{u}_i}$ and $p_i, q_i \in \mathbb{Z} - \{0\}$ such that $g_{i} = h_i^{p_i}$ and $g_{i+1} = h_i^{q_i}$ for $1 \leq i \leq \ell-1$ and $g_{\ell} = zh_0^{p_{\ell}}z^{-1}$ and $g_1 = h_0^{q_{\ell}}$.
   Since $G$ acts freely on a CAT(0) cube complex it cannot contain a subgroup isomorphic to $\la r,t \mid t^{-1} r^n t r^m \ra$ where $n \neq \pm m$ \cite{HaglundSemiSimple}.
   Therefore $\frac{p_1 \cdots p_{\ell}}{q_1 \cdots q_{\ell}} = \pm 1$, so Lemma \ref{lemma:WallOrbits} says
   \[
    R(z) =  \prod_{i=0}^{\ell} \frac{\#[\rho_i, g_{i+1} ]}{\#[\rho_i, g_{i} ]} =
    \frac{\#[\rho_0,h_0^{q_{\ell}}]}{\#[\rho_0, g_0]} \frac{\#[\rho_{\ell}, zg_0z^{-1}] }{\#[\rho_{\ell}, zh_0^{p_{\ell}}z^{-1}]} \prod_{i=1}^{\ell-1} \frac{\#[\rho_i, h_i^{q_i} ]}{\#[\rho_i, h_i^{p_i} ]}
    = \frac{q_1 \cdots q_{\ell}}{p_1 \cdots p_{\ell}}
    = \pm 1,
   \]
   \noindent since $\#[\rho_0,h_0] = \#[\rho_{\ell}, zh_0z^{-1}]$ and $\#[\rho_0, g_0] = \#[\rho_{\ell}, zg_0z^{-1}]$. This contradicts our choice of $\gamma$.
   \end{proof}

   Parameterise the circle in $\Lambda$ containing $a_i$ as an immersed path $c_i$ with $c_i(0) = c_i(1) = a_i(1)$ and such that if $a_i(0) \neq a_i(1)$ there is an immersed path $a_i'$ satisfying $c_i^{k_i} = (a_i')^{-1} \cdot a_i$ for some $k_{i} \geq 1$.
   If $a_i(0) = a_i(1)$ then let $c_i^{k_i} = a_i$, and let $a_i' = a_i^{-1}$.
   Note that $a_i(0) = a_i'(0)$ and $a_i(1) = a_i'(1)$.
   Define a sequence of immersed paths $\{\gamma_m : I \rightarrow \Lambda \}$ inductively by setting $\gamma_{-1} = a_0(0)$, and 

   \[
    \gamma_m = \gamma_{m-1} \cdot \prod_{i=0}^{n-1} \hat{a}_{mn +i} \cdot c_i^{s(mn +i)} \cdot b_{i+1} 
   \]
   \noindent where $\hat{a}_{mn +i} \in \{ a_i, a_i' \}$ and $s(mn + i)\in \mathbb{Z} - \{ 0 \}$.
    Both $\hat{a}_{mn+i}$ and $s(mn+i)$ will be defined inductively to replicate the spiralling effect in Example~\ref{spiralExmp}.
   Note that for $\gamma_m$ to be an immersed path, if $\hat{a}_{mn+i} = a_i$ then $s(mn+i) > 0$, and if $\hat{a}_{mn+i} = a_i'$ then $s(mn+i) < 0$.

     \begin{figure}
 \centering
 \includegraphics[scale=0.3]{./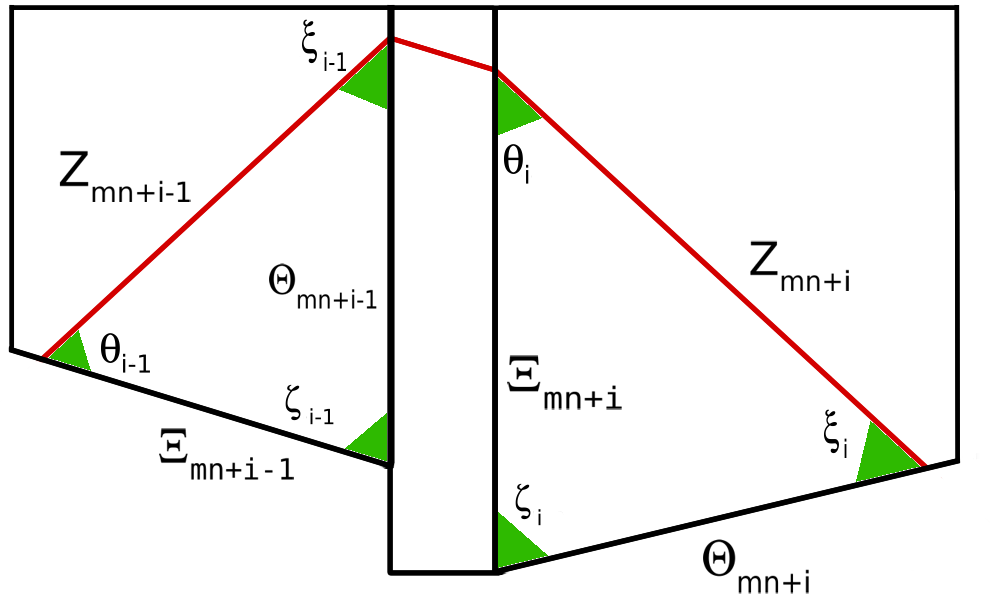}
 \caption{A portion 
 of $\gamma_m$ traversing $X_{v_{i-1}}$, $Y_i$, and $X_{v_i}$ is illustrated schematically with the corresponding side lengths labeled.}
 \label{fig:twoTriangles}
\end{figure}

   \underline{\textbf{Stage \ref{step2}:}}
   Let ${X}_{{v}_i}$ be the vertex space containing the image of $a_i$.
   Observe that for each $b_i$ there is a cylinder $Y_i \looparrowright X$, that $b_i$ factors through, where $Y_i$ is a union of cylinders that cover edge spaces and cylinders that immerse in the vertex spaces.
   Let $\ola{Y}_{i}$ and $\ora{Y}_{i}$ denote the boundary components of $Y_i$ with
   $\ola{f_i} : \ola{Y}_i \rightarrow X_{v_{i-1}}$ and $\ora{f_i} : \ora{Y}_i \rightarrow X_{v_{i}}$ denoting the corresponding restrictions of the immersion.
   When $\gamma$ traverses every vertex space diagonally then each $Y_i$ is an edge space.

   We choose $\hat{a}_{mn+i} \in \{a_i , a_i'\}$ inductively by first setting $\hat{a}_1 = a_1$.
   Assuming that $\hat{a}_{mn+i-1}$ has been chosen we specify that $s(mn+i-1) \in \integers - \{0\}$ is positive or negative so that $\hat{a}_{mn+i-1}  \cdot c_i^{s(mn+i-1)}$ is a locally geodesic path in $X_{v_i}$.
    Choose $\hat{a}_{mn+i}\in \{a_i , a_i'\}$ such that the terminal point of $\hat{a}_{mn+i-1} \cdot c_i^{s(mn+i-1)}$ meets $\ola{Y}_i$ with an acute angle on the same side as the initial point of $\hat{a}_{mn+i} \cdot c_i^{s(mn+i)}$ leaves $\ora{Y}_i$.
   This ensures that $\gamma_m$ spirals in a consistent direction.

   Inside the torus $X_{v_i}$ the paths $\ora{f_{i}}$, $\ola{f_{i+1}}$ and $c_i$ determine a triangle with angles $\theta_i$ between the $\ora{f_{i}}$ and $c_i$ sides, $\xi_i$ between the $\ola{f_{i+1}}$ and $c_i$ sides and $\zeta_i$ between the $\ora{f_{i}}$ and $\ola{f_{i+1}}$ sides.
   The rule of sines states
   \[
    \frac{Z_i}{\sin \zeta_i} = \frac{\Xi_i}{\sin \xi_i} = \frac{\Theta_i}{\sin \theta_i}
   \]
   \noindent where the value of the numerator is the length of the side opposite the angle in the denominator (see Figure \ref{fig:twoTriangles}). Let $Z_{mn+i} = |\hat{a}_{mn+i} \cdot c_i^{s(mn +i)}|$.
   The subscripts of $\zeta_i, \xi_i$, and $\theta_i$ are considered modulo $n$, while the subscripts of $Z_i, \Xi_i$, and $\Theta_i$ are not.
   Note that we chose $\hat{a}_{mn+i}$ and the sign of $s(mn+i)$ so that $0 < \xi_i, \theta_i < \pi / 2$.

    Let $r_i =
    \frac{1}{|\ola{f_i}|} \frac{\sin \xi_i}{\sin \zeta_i}$, and $t_i = \frac{1}{|\ora{f_i}|} \frac{\sin \theta_i}{\sin \zeta_i}$ with $r_{i+n}=r_i$ and $t_{i+n} = t_i$.
   Define $s(mn+i) \in \integers - \{0\}$ inductively by setting $s(0) = 1$, and assuming that $s(mn+i-1)$ is defined choose $|s(mn+i)|$ to be large enough that
     \begin{align*}
     1  & \leq \; r_iZ_{mn+i} - t_{i-1}Z_{mn+i-1} \label{eq:lowerbound} \tag{$\dagger$} \\
      & = r_i \:( | \hat{a}_{mn+i} \cdot c_i^{s(mn+i)} | )  - \; t_{i-1} (|\hat{a}_{mn+i-1} \cdot c_{i-1}^{s(mn + i -1)}| )
    \end{align*}

   \noindent and small enough that

     \[
     r_i \:( | \hat{a}_{mn+i} \cdot c_i^{s(mn+i)} | )  - \; t_{i-1} (|\hat{a}_{mn+i-1} \cdot c_{i-1}^{s(mn + i -1)}| )
     \leq 1 + r_i | c_i^{1 + k_i}| . \label{eq:upperbound} \tag{$\ddagger$}
    \]

     \noindent Combining (\ref{eq:lowerbound}) and (\ref{eq:upperbound}), replacing $| \hat{a}_{mn+i} \cdot c_i^{s(mn+i)} |$ with $Z_{mn+i}$, and applying the rule of sines produces the following inequality:

     \[
    1 \leq \;
    \frac{ \Xi_{mn+ i}}{|\ola{f_i}|} - \frac{ \Theta_{mn + i-1}}{|\ora{f_i}|}
    \leq 1 + r_i |c_i^{1+k_i}| . \label{eq:nastypasty} \tag{$\star$}
   \]

   \noindent Geometrically, this means that $s(mn+i)$ is defined so that consecutive triangles in the sequence have adjacent sides increasing at a bounded rate (see Figure~\ref{fig:twoTriangles}).
   The $|\ola{f}_i|$ and $|\ora{f}_i|$ accounts for the different lengths of the attaching maps of $Y_i$.
   To estimate a lower bound on the length of $\gamma_m$, first rewrite (\ref{eq:lowerbound}) as

   \[
    Z_{mn+i} \geq r_i^{-1} \big[ 1+ t_{i-1} Z_{mn+i-1} \big].
   \]

   \noindent Without loss of generality $\prod_{i=0}^{n-1} r_i^{-1}t_i = \prod_{i=0}^{n-1}  \frac{\sin \theta_i}{\sin \xi_i} \frac{|\ola{f}_i|}{|\ora{f}_i|} \geq 1$, otherwise replace $\gamma$ with $\gamma^{-1}$, which switches $\theta_i$ for $\xi_i$ and $\ola{f}_i$ for $\ora{f}_i$. Applying this formula recursively produces
   \[
    Z_{mn+i} \geq r_i^{-1} \big[1 + t_{i-1}r_{i-1}^{-1}\Big[1 + t_{i-2}r_{i-2}^{-1} \bigg[ \cdots  \Bigg[ 1+ t_{i-n} Z_{mn+i-n} \Bigg] \cdots \bigg] \Big] \big],
   \]
   \noindent hence
   \[
    Z_{mn+i} \geq \Bigg[ \prod^{n-1}_{i=0} r_i^{-1}t_i \Bigg]  Z_{m(n-1)+i} + \cdots \; \geq \; Z_{m(n-1)+i} + D,
   \]
   \noindent where $D > 0$ is a lower bound on the lower order terms, which are independent of any value of $Z_{mn+i}$, and the value of $i$. Therefore there is a lower bound on the length of $\gamma_m$:
   \[
    |\gamma_m| \geq \sum^{m(n+1) -1}_{j=0} Z_j \geq \sum_{j=0}^m jD \geq \frac{D}{2} m (m+1).
   \]
   \noindent Hence the length of $\gamma_m$ grows at least quadratically.


   \underline{\textbf{Stage \ref{step3}:}} Define $\rho_m := \gamma_m \cdot a_0 \cdot c_0 \cdot a_0^{-1} \cdot \gamma_m^{-1}$, an immersed path with length growing quadratically with $m$.
   See Figure \ref{fig:DoubleSpiral}.

  \begin{figure}
 \centering
 \includegraphics[scale=0.2]{./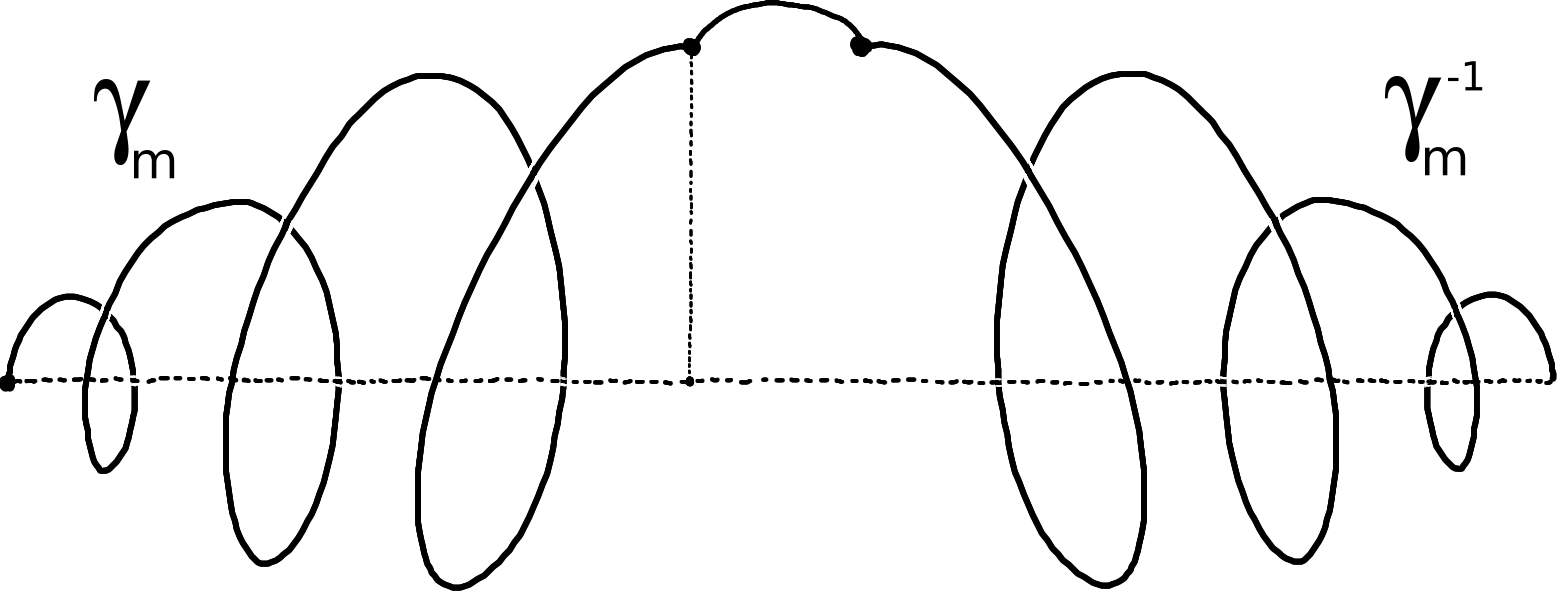}
 \caption{The spiral $\gamma_m$ is doubled up.}
 \label{fig:DoubleSpiral}
\end{figure}

   \underline{\textbf{Stage \ref{step4}:}} A path homotopy of $\rho_m$ in $X$ will be described in two stages, and the resulting curve will have length a linear function of $m$.
   Compare the following description to Figure \ref{fig:homotope}.

   \emph{Step 1:} Path homotope each diagonal segment $\hat{a}_{mn+i} \cdot c_i^{s(mn+i)}$ in $\gamma_m$ across the vertex space $X_{v_i}$ to a path $p_{mn+i} \cdot q_{mn+i}$ where $p_{mn+i}$ is a geodesic with image in $X_{v_i} \cap Y_i$ and $q_{mn+i}$ is a geodesic with image in $X_{v_i} \cap Y_{i+1}$ (see the top right diagram in Figure \ref{fig:homotope}).
   These paths respectively have lengths $\Xi_{mn+i}$ and $\Theta_{mn+i}$.
   The same path homotopies are made to $\gamma_m^{-1}$.

   \emph{Step 2:} Path homotope each $q_{mn+i-1} \cdot b_{i} \cdot p_{mn+i}$ factoring through $Y_{i}$, and the corresponding inverse path, to a local geodesic factoring through $Y_{i}$ joining the initial and terminal points (see the bottom right diagram in Figure \ref{fig:homotope}).
   From the inequality \eqref{eq:nastypasty}, we can deduce that the resulting path has length at most $|b_{i-1}| + |\ora{f}_i|\big(1 + r_i |c_i^{1 + k_i}|\big)$.
   The segment $q_{mn-1} \cdot b_n \cdot a_0 \cdot c_0 \cdot a_0^{-1} \cdot b_n^{-1} \cdot q_{mn-1}^{-1}$ can similarly be homotoped since $b_n \cdot a_0 \cdot c_0 \cdot a_0^{-1} \cdot b_n^{-1}$ factors through a cylinder $Y \looparrowright X$.
   \begin{figure}
 \centering
 \includegraphics[scale=0.3]{./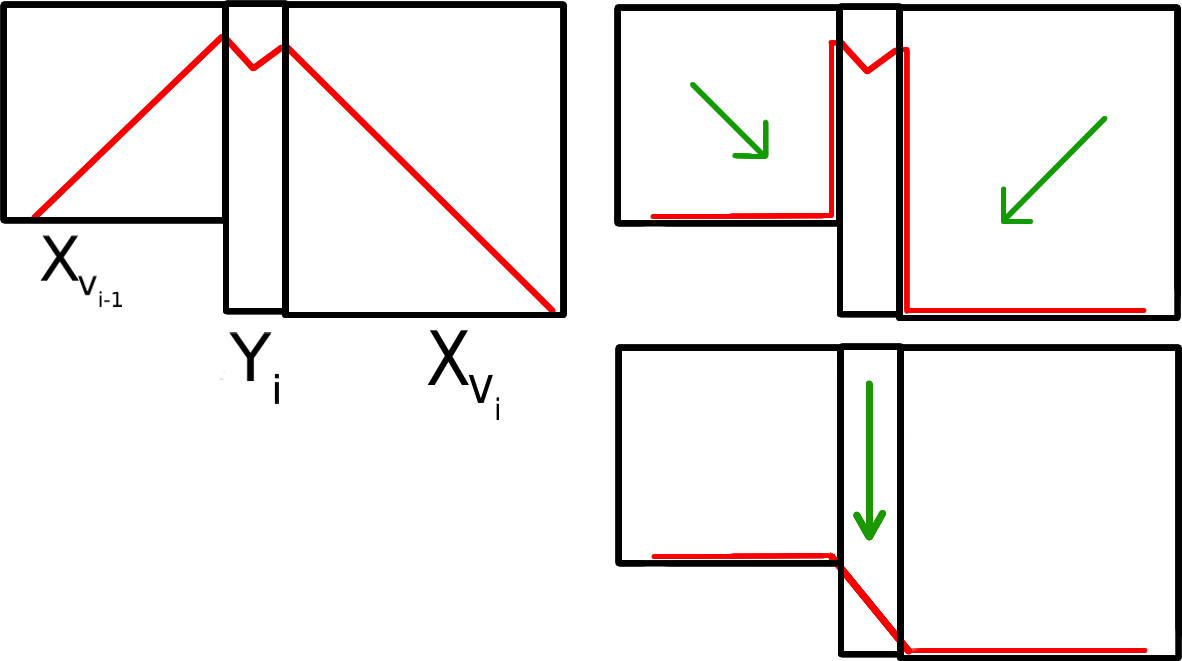}
 \caption{The left diagram shows $X_{v_{i-1}}$, $Y_i$, and $X_{v_i}$ containing $a_{i-1}\cdot c_{i-1}^{s(mn+i-1)}\cdot b_i \cdot a_{i}\cdot c_{i}^{s(mn+i)}$. The top right diagram shows the first step of the homotopy, and the bottom right diagram shows the second step.}
 \label{fig:homotope}
\end{figure}
   Therefore the length of the resulting curve $\rho_m'$ is less than
   \[
    2mn \max_{0\leq i < n} \Big\{ |b_{i-1}| + |\ora{f}_i|\big(1 + r_i |c_i^{1 + k_i}| \big) \Big\} + |a_0 \cdot c_0 \cdot a_0^{-1}|
   \]
   which is a linear function of $m$.
   Thus $\wt{\Lambda}$ is distorted.
 \end{proof}

\bibliographystyle{plain}
\bibliography{Ref}

\begin{thebibliography}{10}

\bibitem{BradyBridson00}
N.~Brady and M.~R. Bridson.
\newblock There is only one gap in the isoperimetric spectrum.
\newblock {\em Geom. Funct. Anal.}, 10(5):1053--1070, 2000.

\bibitem{BurnsKarrassSolitar87}
R.~G. Burns, A.~Karrass, and D.~Solitar.
\newblock A note on groups with separable finitely generated subgroups.
\newblock {\em Bull. Austral. Math. Soc.}, 36(1):153--160, 1987.

\bibitem{Cashen10}
Christopher~H. Cashen.
\newblock Quasi-isometries between tubular groups.
\newblock {\em Groups Geom. Dyn.}, 4(3):473--516, 2010.

\bibitem{HaglundSemiSimple}
Fr{\'e}d{\'e}ric Haglund.
\newblock Isometries of {C}{A}{T}(0) cube complexes are semi-simple.
\newblock pages 1--17, 2007.

\bibitem{HaglundWise08}
Fr{\'e}d{\'e}ric Haglund and Daniel~T. Wise.
\newblock Special cube complexes.
\newblock {\em Geom. Funct. Anal.}, 17(5):1551--1620, 2008.

\bibitem{HruskaWise13}
G.~C. Hruska and Daniel~T. Wise.
\newblock Finiteness properties of cubulated groups.
\newblock {\em Compos. Math.}, 150(3):453--506, 2014.

\bibitem{NibloWiseNonEngulfing}
Graham~A. Niblo and Daniel~T. Wise.
\newblock The engulfing property for $3$-manifolds.
\newblock In {\em The Epstein birthday schrift}, pages 413--418 (electronic).
  Geom. Topol., Coventry, 1998.

\bibitem{RubinsteinWang98}
J.~Hyam Rubinstein and Shicheng Wang.
\newblock {$\pi_1$}-injective surfaces in graph manifolds.
\newblock {\em Comment. Math. Helv.}, 73(4):499--515, 1998.

\bibitem{Sageev95}
Michah Sageev.
\newblock Ends of group pairs and non-positively curved cube complexes.
\newblock {\em Proc. London Math. Soc. (3)}, 71(3):585--617, 1995.

\bibitem{Scott78}
Peter Scott.
\newblock Subgroups of surface groups are almost geometric.
\newblock {\em J. London Math. Soc. (2)}, 17(3):555--565, 1978.

\bibitem{Wise11}
Daniel~T. Wise.
\newblock Morse theory, random subgraphs, and incoherent groups.
\newblock {\em Bull. Lond. Math. Soc.}, 43(5):840--848, 2011.

\bibitem{Wise13}
Daniel~T. Wise.
\newblock Cubular tubular groups.
\newblock {\em Trans. Amer. Math. Soc.}, 366(10):5503--5521, 2014.

\bibitem{Woodhouse214}
D.J. Woodhouse.
\newblock Classifying virtually special tubular groups.
\newblock In preparation.

\end{thebibliography}
\nocite{*}
\end{document}